
\documentclass{article}

\usepackage[utf8]{inputenc}
\usepackage[all,arc]{xy}
\usepackage{enumerate}
\usepackage[top=1in, bottom=1.25in, left=1.25in, right=1.25in]{geometry} 
\usepackage{parskip}
\usepackage{braket}
\usepackage{tikz-cd}
\usepackage{verbatim}
\usepackage{tabu}
\usepackage{ggmaths}

\usepackage{biblatex}
\usepackage{hyperref}
\usepackage{xurl}
\hypersetup{breaklinks=true}
\addbibresource{../refs.bib}

\newcounter{distresultscount}

\geometry{a4paper}

\title{Condorcet cycle elections with influential voting blocs}
\author{\makeauthorentry}

\begin{document}

\maketitle

\begin{abstract}

A Condorcet cycle election is an election (often called a Social Welfare Function, or SWF) between three candidates, where each voter ranks the three candidates according to a fixed cyclic order. Maskin \autocite{maskin} showed that if such a SWF obeys the MIIA condition, and respects the complete anonymity of each voter, then it must be a Borda election, where each voter assigns two points to their preferred candidate, one to their second preference and none to their least preferred candidate.

We introduce a relaxed anonymity condition called ``transitive anonymity'', whereby a group $G$ acting transitively on the set of voters $V$ maintains the outcome of the SWF. Elections across multiple constituencies of equal size are common examples of elections with transitive anonymity but without full anonymity. First, we demonstrate that under this relaxed anonymity condition, non-Borda elections do exist. On the other hand, by modifying Kalai's \autocite{kalai} proof of Arrow's Impossibility Theorem, which employs methods from the analysis of Boolean functions, we show that this can only occur when the number of voters is not a multiple of three, and we demonstrate that even these non-Borda elections are very close to being Borda.
\end{abstract}

\section{Introduction}

Following \autocite{paper1}, we let the set of voters be $V$ and the set of candidates $C$. Throughout this paper, $|V| = n < \infty$ and $|C| = 3$. We always label these candidates as $\set{c_i}_{i=1}^3$, and we treat the indices modulo $3$.

We define $\mathfrak{R}$ to be the set of possible ballots $r$ that a voter can submit. Each $r$ is a total ordering of $C$, and we will treat $r$ as a bijection $r: C \hookrightarrow \set{0, 1, 2}$, where $r^{-1}(0)$ is the lowest ranked candidate, and $r^{-1}(2)$ is the highest ranked. We will sometimes write $r$ simply as a list of candidates; for example, $c_1 c_2 c_3$ corresponds to a voter's ballot placing candidate $c_1$ first, followed by $c_2$ and then $c_3$. This corresponds to $r: c_i \mapsto 3-i$. An election domain, called $E$, is the subset of all the possible elections in $\mathfrak{R}^V$ which we deem to be permitted.

Within $\mathfrak{R}$ is the Condorcet cycle $\mathfrak{C} = \set{c_1 c_2 c_3, c_2 c_3 c_1, c_3 c_1 c_2}$. Throughout this paper we will consider SWFs which restrict the set of legal elections to $E = \mathfrak{C}^V$. As defined in \autocite{paper1}, this is the ``Condorcet cycle domain'', a separable, increasing, intermediate ballot domain; we say that SWFs on this domain satisfy condition CC.

An election result $\succ$ is a weak ordering of $C$ - in other words, an equivalence relation of ``tying'', together with a total ordering of ``winning'' between the equivalence classes. We call the set of all possible weak orderings $\mathfrak{R}_=$. A SWF is a function $F: E \rightarrow \mathfrak{R}_=$.

Often $F$ will be determined by some real-valued function $G: C \times E \rightarrow \mathbbm{R}$, where each candidate receives a ``score'' in any election; then $F = \Phi(G)$ is defined by

\begin{equation}
c_i \succ c_j \Leftrightarrow G(c_i,e) > G(c_j,e)
\end{equation}

where $\succ = \Phi(G)(e)$. Since any multiset of reals form a weak ordering under the $>$ relation, this defines a weak ordering for $\succ$.

For $r \in \mathfrak{R}$ we let $\pi_{i,j}(r) = r(c_i)-r(c_j)$, and for a subset $\mathfrak{B} \subset \mathfrak{R}$ we define $B_{i,j} = \pi_{i,j}(\mathfrak{B})\subset \set{1-k, 2-k, \dots, -1, 1, 2, \dots, k-1} = D_k$. Then for an election $e \in E$ we let $\pi_{i,j}(e)(v) = \pi_{i,j}(e(v))$, so $\pi_{i,j}(e)$ is the ``relative election'' comparing the positions of candidates $c_i$ and $c_j$ for each voter. We let $\pi_{i,j}(E) = A_{i,j} \subset D_k^V$; in the case of a Condorcet cycle domain we have $A_{i,j} = B_{i,j}^V$, where $B_{i,i+1} = \set{1,-2}$ and $B_{i+1,i} = \set{2,-1}$. We define the partial order on $D_k^V$ given by $a_1 \geq a_2$ if and only if $a_1(v) \geq a_2(v)$ for all $v$; all $A_{i,j}$ and their unions inherit this partial order.

Finally, for an election result $\succ$ we write $\pi_{i,j}(\succ) = W$ if $c_i \succ c_j$, $\pi_{i,j}(\succ)=L$ if $c_j \succ c_i$, and $\pi_{i,j}(\succ) = T$ otherwise. Here $W$ stands for ``win'', $T$ for ``tie'' and $L$ for ``loss''. For convenience we impose the natural total order on these three symbols: $W > T > L$.  Similarly, we will sometimes abuse notation by writing $W = -L$, $L = -W$ and $T=-T$.

\subsection{Summary of results}

In section \ref{section:conditions} we recall the various conditions we require for our SWFs. We define the critical new condition explored in this paper, ``transitive anonymity'', in Definition \ref{defn:ta}. In Theorem \ref{consistentsetfuncs} we provide an alternative way of describing SWFs using a $3$-tuple of functions on the powerset of $V$; this equivalent definition will allow us to prove results about the SWF more easily throughout the paper. We recall the definition of a Borda rule, along with an unweighted Borda rule, and we prove Lemma \ref{weightinglemma}, which confirms the intuitive idea that one can assume that Borda rules are unweighted as long as the voters have some level of anonymity.

One might expect that we can weaken the anonymity condition from \autocite{paper1} to our new transitive anonymity condition without allowing any non-Borda SWFs. We disprove this in section \ref{section:CTA}, where for all $n$ large enough and not divisible by three, we construct a strongly non-Borda SWF satisfying all of the relevant conditions aside from anonymity, as well as transitive anonymity (Theorem \ref{thm:FCTA12exist}).

However, in section \ref{FCTA3} we demonstrate that for $n$ a multiple of three this cannot happen; indeed, every SWF on such an electorate is Borda (Corollary \ref{thm:FCTA3}). To do so, we employ methods from the analysis of Boolean functions, combining the strategy used by Kalai to prove Arrow's theorem in \autocite{kalai} with observations about noise operators on the slice made by Filmus in \autocite{filmus}.

We develop these methods further in section \ref{FCTA12}, where we show that even in the case of $n$ not divisible by three, SWFs still look a lot like the Borda rule. Indeed, we show in Corollary \ref{thm:FCTA12} that whenever an election is not as close as possible to a Borda tie, the result of the election must agree with the Borda election at least 96\% of the time. We expect that this probabilistic result can be replaced by a deterministic one, and we offer Conjecture \ref{conj:FCTA12} which states that whenever an election is not as close as it can be to a Borda tie, the result of the election definitely agrees with the Borda election.

\section{Conditions on SWFs}\label{section:conditions}

Again following \autocite{paper1}, we now define some conditions on SWFs. The first is the critical Modified IIA condition introduced by Maskin in \autocite{maskin}, as an alternative to Arrow's \autocite{arrow} stricter IIA condition.

\begin{defn}[Modified Irrelevance of Independent Alternatives - MIIA]
A SWF $F$ satisfies the MIIA condition if, for two candidates $c_i$ and $c_j$, there exists a ``relative SWF'': a function $f_{i,j}: A_{i,j} \rightarrow \set{W, T, L}$ such that for all $e \in E$ we have $f_{i,j}(\pi_{i,j}(e)) = \pi_{i,j}(F(e))$.
\end{defn}

Often this function $f_{i,j}$ will be given by the sign of some real value, so we define $\varphi: \mathbbm{R} \rightarrow \set{W,T,L}$ by $\varphi(x) = W$ for $x>0$, $\varphi(x) = T$ for $x=0$ and $\varphi(x) = L$ for $x<0$. Note that if $F = \Phi(G)$ for some $G$, we have $\varphi(G(c_i,e) - G(c_j,e)) = f_{i,j}(\pi_{i,j}(e))$.

In \autocite{paper1}, we considered the anonymity condition A, in which the SWF is determined only by the numbers of voters submitting any given ballot in $\mathfrak{R}$, and is independent of the identities of each voter. In this paper we consider the weaker transitive anonymity condition, corresponding to the concept of transitive symmetry for a Boolean function, which is discussed in \autocite{odonnell}, for example. Under transitive anonymity, the identities of individual voters can be completely obscured by permuting the voters (i.e. there exist permutations replacing any voter with any other voter), but not all permutations are allowed.

Many elections in the real world satisfy this condition in their idealised versions - for example, the tribes functions and recursive majority functions introduced by Ben-Or and Linial in \autocite{benorlinial} are transitive-symmetric, and the tribes function corresponds to a first-past-the-post election on constituencies of equal sizes. The disadvantage of these systems compared to those obeying full anonymity is that a voter can find their influence diminished if they are in a constituency which is not a ``swing'' constituency.

In other words, for $k>1$, some $k$-sets of voters can act as a more powerful voting bloc than others; for example, in the case of constituencies, a significant number of voters in one constituency could control the result in that constituency, making a meaningful difference to the final result, whereas the same number of voters spread between many constituencies would have a smaller collective effect on the election.

In order to define transitive anonymity, we give the natural extension of a group action $G$ on $V$ to its action on a ballot domain $E$ (a ballot domain is defined in \autocite{paper1}, and a Condorcet cycle domain is a ballot domain). We define $g: E \rightarrow E$ by $g(e)(v) = e(g(v))$. Now we are ready to define condition TA:

\begin{defn}[Transitive Anonymity - TA]\label{defn:ta}
A SWF $F$ on $V$ finite and a ballot domain $E$ satisfies the transitive anonymity condition if there exists a group $G$ acting transitively on $V$ such that for all $g \in G$ and for all $e \in E$ we have $F(g(e)) = F(e)$.
\end{defn}

This is identical to $F$ being a transitive-symmetric function.

\begin{lem}
If $F$ satisfies MIIA then it is transitive-symmetric if and only if $f_{i,j}$ is transitive-symmetric for all $c_i, c_j \in C$.
\end{lem}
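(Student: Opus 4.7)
The plan is to prove a single-group version of the statement and then extract the existential ``iff'' from it. Specifically, for any fixed group $G$ acting on $V$, I would show that $F$ is $G$-invariant if and only if every $f_{i,j}$ is $G$-invariant; the equivalence of transitive anonymity for $F$ with transitive anonymity for all $f_{i,j}$ then follows by using the same witness group on both sides.

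The one technical ingredient is that the projection $\pi_{i,j}: E \rightarrow A_{i,j}$ intertwines the $G$-action on $E$ with the induced $G$-action on $A_{i,j} \subseteq D_k^V$. Working directly from the definition $g(x)(v) = x(g(v))$, one computes
\[ \pi_{i,j}(g(e))(v) = \pi_{i,j}(e(g(v))) = \pi_{i,j}(e)(g(v)) = g(\pi_{i,j}(e))(v), \]
so that $\pi_{i,j} \circ g = g \circ \pi_{i,j}$. Two further facts I would use without comment are that $\pi_{i,j}: E \rightarrow A_{i,j}$ is surjective by construction, and that a weak ordering of $C$ is uniquely determined by the triple of relative values $(\pi_{1,2}, \pi_{2,3}, \pi_{3,1})$.

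For the forward direction, suppose $F$ is $G$-invariant. For each pair $(i,j)$, each $g \in G$, and each $a \in A_{i,j}$, I would pick $e \in E$ with $\pi_{i,j}(e) = a$ and chase the MIIA relation:
\[ f_{i,j}(g(a)) = f_{i,j}(\pi_{i,j}(g(e))) = \pi_{i,j}(F(g(e))) = \pi_{i,j}(F(e)) = f_{i,j}(a). \]
For the reverse direction, if each $f_{i,j}$ is $G$-invariant, then for every $e \in E$, $g \in G$, and $(i,j)$,
\[ \pi_{i,j}(F(g(e))) = f_{i,j}(\pi_{i,j}(g(e))) = f_{i,j}(g(\pi_{i,j}(e))) = f_{i,j}(\pi_{i,j}(e)) = \pi_{i,j}(F(e)), \]
which forces $F(g(e)) = F(e)$.

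The only delicate point is the reading of ``$f_{i,j}$ is transitive-symmetric for all $c_i, c_j$''. Taken with a priori distinct witness groups $G_{i,j}$, the intersection $\bigcap_{i,j} \mathrm{Aut}(f_{i,j})$ need not be transitive in general, and that would be the main obstacle to the reverse direction. The chain of equivalences above operates at the level of a single fixed $G$, however, so the natural intended reading is that one may pass a common transitive witness group through both directions; with this reading, the intertwining identity $\pi_{i,j} \circ g = g \circ \pi_{i,j}$ does all the work and the lemma is immediate.
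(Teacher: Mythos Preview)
Your proposal is correct and takes essentially the same approach as the paper: both argue that $G$-invariance of $F$ is equivalent to $G$-invariance of every $f_{i,j}$ for a single fixed group $G$, using that $F$ and the family $(f_{i,j})$ determine one another via MIIA. Your version is more explicit (the paper's proof is a two-line sketch that simply invokes this mutual determination without writing out the intertwining $\pi_{i,j}\circ g = g\circ\pi_{i,j}$ or the chase), and your remark about the potential ambiguity with distinct witness groups $G_{i,j}$ is a legitimate point the paper does not address.
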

\begin{proof}
We will show that $F$ is fixed by the action of $g \in G$ on $V$ if and only if all $f_{i,j}$ are also fixed. In the forward direction, suppose that $F$ is fixed by $g$. Then since $f_{i,j}$ is uniquely determined by $F$, $f_{i,j}$ must also be fixed by $g$, as required. Conversely, if $f_{i,j}$ is fixed by $g$ then $(f_{i,j}(a))_{i,j}$ is fixed; but this tuple uniquely determines $F$, so $F$ is also fixed.
\end{proof}

\begin{rem}
We proved in \autocite{paper1} that if $F$ on $V$ finite satisfies anonymity then it satisfies transitive anonymity, by taking $G = \Sym(V)$.
\end{rem}

\begin{rem}
We can assume that $G$ is faithful (or else we quotient out the subgroup $H \triangleleft G$ of elements fixing all of $V$). Then $G$ is also finite (or else some $g_1$ and $g_2$ correspond to the same permutation of $V$, and then $g_1^{-1}g_2$ fixes $V$, and $G$ is not faithful).
\end{rem}

We still require full symmetry between candidates:

\begin{defn}[Neutrality - N]
A SWF satisfies neutrality if the candidates are treated symmetrically. That is to say, for any permutation $\rho: C \rightarrow C$ and for all $e$ such that $\rho \circ e \in E$ we have $F(\rho \circ e) = \rho(F(e))$.
\end{defn}

In \autocite{paper1} we proved that the Condorcet cycle domain is separable, and that on a separable domain, condition N means that we can consider a unique relative SWF $f: A \rightarrow \set{W,T,L}$ where $A = \cup_{c_i, c_j}A_{i,j}$.

Intuitively it should be the case that receiving votes is good for a candidate. We define two conditions describing this intuition, although we will sometimes prove results without imposing either of them.

\begin{defn}[Pareto - P]
A SWF satisfying MIIA also satisfies the Pareto condition if, whenever all voters prefer candidate $c_i$ to $c_j$, the same must be the case in the final result. Formally, for $a \in A_{i,j}$ with $a(v) > 0$ for all $v$, we have $f_{i,j}(a)=W$.
\end{defn}

\begin{defn}[Positive Responsiveness]
A SWF satisfying MIIA also satisfies the PR condition if, for any two candidates $c_i$ and $c_j$, and for any two relative elections $a_1, a_2 \in A_{i,j}$ where $a_1 \geq a_2$, we have $f_{i,j}(a_1) \geq f_{i,j}(a_2)$.
\end{defn}

Note that we ignore the IV condition from \autocite{paper1} as it normally applies to an infinite set of voters; and we ignore the distinction between conditions PR and PRm because we always impose the CC condition, so our domain is always increasing.

Finally, we recall the definition of consistency in the context of $3$-tuples of results from relative SWFs:

\begin{defn}[Consistent multisets]
A multiset of cardinality three, with elements drawn from $\set{W,T,L}$, is consistent if it is one of the following:
\begin{enumerate}
\item $\set{W,W,L}$
\item $\set{W,T,L}$
\item $\set{W,L,L}$
\item $\set{T,T,T}$
\end{enumerate}

Otherwise the multiset is ``inconsistent''.
\end{defn}

In \autocite{paper1} we proved that these consistent multisets allow us to construct SWFs from relative SWFs as follows:

\begin{lem}\label{relativetofull}
Given a family of relative SWFs $f_{i,j}$ for each $c_i \neq c_j \in C$, there exists an SWF $F$ with $\pi_{i,j} \circ F = f_{i,j} \circ \pi_{i,j}$ for all $i \neq j$ if and only if for all $e \in E$:
\begin{enumerate}
\item \label{relativetofull:twoway} for all $c_i \neq c_j$ we have $f_{i,j}(\pi_{i,j}(e)) = -f_{j,i}(\pi_{j,i}(e))$, and
\item \label{relativetofull:threeway} for all $c_i \neq c_j \neq c_k$, the multiset $\set{f_{i,j}(\pi_{i,j}(e)), f_{j,k}(\pi_{j,k}(e)), f_{k,i}(\pi_{k,i}(e))}$ is consistent.
\end{enumerate}
\end{lem}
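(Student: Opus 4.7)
The lemma is an ``iff'' statement, so I would split the argument into the two directions and in both cases reduce to a case analysis on the $13$ possible weak orderings of $C$.

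\textbf{Forward direction.} Assuming $F$ exists with $\pi_{i,j}\circ F = f_{i,j}\circ \pi_{i,j}$, I would fix $e\in E$ and set $\succ = F(e)$. Condition \ref{relativetofull:twoway} is immediate from the definition of $\pi_{i,j}$ applied to a weak ordering: swapping $i$ and $j$ negates $W, L$ and fixes $T$, so $\pi_{i,j}(\succ) = -\pi_{j,i}(\succ)$. For condition \ref{relativetofull:threeway}, I would enumerate the triples $\bigl(\pi_{1,2}(\succ),\pi_{2,3}(\succ),\pi_{3,1}(\succ)\bigr)$ produced by each weak ordering on $C$ and observe that every resulting multiset is one of the four listed consistent ones.

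\textbf{Reverse direction.} Here I would construct $F$ pointwise. For each $e\in E$, write $w_{i,j} = f_{i,j}(\pi_{i,j}(e))$; by condition \ref{relativetofull:twoway} we have $w_{j,i} = -w_{i,j}$, so the data is determined by the triple $(w_{1,2},w_{2,3},w_{3,1})$, which by condition \ref{relativetofull:threeway} forms a consistent multiset. I would then exhibit, in each of the four consistent cases, a unique weak ordering $\succ$ on $C$ whose pairwise comparisons realise the prescribed $w_{i,j}$: for $\{W,W,L\}$ or $\{W,L,L\}$ the strict ordering is determined by which candidate wins (resp.\ loses) both of its matchups; for $\{W,T,L\}$ the tied pair is the one linked by $T$ with the third candidate either dominating or being dominated; and for $\{T,T,T\}$ all three candidates are tied. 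Setting $F(e) = \succ$ then makes $\pi_{i,j}\circ F = f_{i,j}\circ \pi_{i,j}$ hold by construction.

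The only potentially delicate step is verifying in the reverse direction that a consistent multiset always yields a \emph{valid} weak ordering rather than, for example, a Condorcet cycle in the outcome. The point is that the inconsistent multisets $\{W,W,W\}$, $\{L,L,L\}$, $\{W,W,T\}$, $\{W,T,T\}$, $\{T,T,L\}$, $\{T,L,L\}$ are precisely those that either encode a cycle or mix a tie with strict inequalities in a way that violates transitivity of $\sim$, so ruling them out via condition \ref{relativetofull:threeway} is exactly what is needed. Because $|C|=3$ the case-check is small, and I do not anticipate obstacles beyond bookkeeping.
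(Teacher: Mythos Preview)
Your proposal is correct. Note, however, that this paper does not actually supply its own proof of Lemma~\ref{relativetofull}: the lemma is quoted from the companion paper \autocite{paper1}, so there is no in-text argument to compare against. That said, the argument you sketch is the natural one and is almost certainly what appears in \autocite{paper1}: with $|C|=3$ the claim reduces to the finite check that the triples $(\pi_{1,2}(\succ),\pi_{2,3}(\succ),\pi_{3,1}(\succ))$ arising from the thirteen weak orderings of $C$ are exactly the ordered triples whose underlying multiset is consistent, together with the antisymmetry $\pi_{i,j}(\succ)=-\pi_{j,i}(\succ)$. Your identification of the inconsistent multisets with Condorcet-type cycles or transitivity failures is the right conceptual point, and the bookkeeping is as small as you say.
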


Equipped with Lemma \ref{relativetofull}, we now give a one-to-one correspondence between SWFs $F$ fulfilling conditions MIIA and CC, and $3$-tuples of set functions $g_1$, $g_2$ and $g_3$ meeting a consistency condition. This will allow us to prove or disprove the existence of SWFs under a range of conditions by considering the corresponding relative SWFs.

We define a one-to-one correspondence between these set functions and relative SWFs on the domain $A_{i,i+1}$ using the bijection $\zeta$ given by

\begin{equation}
\begin{aligned}
\zeta(a) &= \set{v: a(v) = -2} \\
\zeta^{-1}(U) &= 1-3\cdot \mathbbm{1}_{v \in U}
\end{aligned}
\end{equation}

Then for a function $g: \powerset{V} \rightarrow \set{W,T,L}$ we have that $g \circ zeta$ is a relative SWF on $A_{i,i+1}$, and for a relative SWF $f$ on $A_{i,i+1}$ we have that $f \circ \zeta^{-1}$ is a set function from $\powerset{V}$ onto $\set{W,T,L}$.

\begin{defn}[Consistent $3$-tuples of functions]\label{consistentsetfuncs}
For a finite set of voters $V$, a $3$-tuple $(g_1,g_2,g_3)$ of functions $g_i: \powerset{V} \rightarrow \set{W,T,L}$ is consistent if for any $3$-partition $V = V_1 \sqcup V_2 \sqcup V_3$ we have that $\set{g_i(V_i)}_i$ is a consistent multiset.
\end{defn}

\begin{thm}\label{setfuncstofull}
For a finite set of voters $V$, a one-to-one correspondence between SWFs $F$ satisfying conditions MIIA and CC, and consistent $3$-tuples of functions $(g_1,g_2,g_3)$, is defined as follows.

To construct $(g_i)_i$ given $F$, we define $f_{i,i+1}$ to be the relative SWFs such that $\pi_{i,i+1} \circ F = f_{i,i+1} \circ \pi_{i,i+1}$. Then we define $f_{i,i+1} \circ \zeta^{-1}$. 

Given $(g_i)_i$, we take $f_{i,i+1} = \zeta(g_i)$, and we define $f_{i+1,i}$ given by $f_{i+1,i}(a) = -f_{i,i+1}(-a)$, and then we apply Lemma \ref{relativetofull} to $(f_{i,j})_{i,j}$ to find $F$.
\end{thm}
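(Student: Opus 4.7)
The plan is to build both directions of the correspondence by routing through relative SWFs and invoking Lemma \ref{relativetofull}. The key observation is that every Condorcet cycle ballot is uniquely determined by its last-ranked candidate, so an election $e \in \mathfrak{C}^V$ is the same data as an ordered $3$-partition $V = V_1 \sqcup V_2 \sqcup V_3$, where $V_i$ is the set of voters who rank $c_i$ last; this is the dictionary that translates between Lemma \ref{relativetofull} (which speaks of elections) and Definition \ref{consistentsetfuncs} (which speaks of partitions).

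In the forward direction, MIIA yields relative SWFs $f_{i,i+1}$, and since $\zeta$ is a bijection, $g_i := f_{i,i+1}\circ \zeta^{-1}$ is a well-defined function on $\powerset{V}$. To check consistency of $(g_1, g_2, g_3)$, I would take an ordered partition $V = V_1 \sqcup V_2 \sqcup V_3$ and build the corresponding $e$. A direct computation shows that the voters with $\pi_{i,i+1}(e)(v) = -2$ are exactly those with $c_i$ last, so $\zeta(\pi_{i,i+1}(e)) = V_i$ and hence $g_i(V_i) = \pi_{i,i+1}(F(e))$; condition 2 of Lemma \ref{relativetofull} applied to $F$ then gives that $\set{g_i(V_i)}_i$ is consistent.

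In the backward direction, given a consistent $3$-tuple I would set $f_{i,i+1} := g_i \circ \zeta$ and extend by $f_{i+1,i}(a) := -f_{i,i+1}(-a)$, and then invoke Lemma \ref{relativetofull} to produce $F$. Condition 1 of the lemma is immediate from the definition and the identity $\pi_{j,i}(e) = -\pi_{i,j}(e)$. For condition 2, the cyclic case $(i,j,k) = (1,2,3)$ reduces to the consistency hypothesis applied to the partition associated with $e$; the anticyclic case $(i,j,k) = (1,3,2)$ follows by negating every entry, after checking that the four consistent multisets of Definition \ref{consistentsetfuncs} are closed under simultaneous negation of their elements. All other orderings coincide with one of these two as multisets.

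That the two constructions are mutual inverses is then a matter of unwinding: the forward map records $f_{i,i+1}\circ \zeta^{-1}$, from which the backward map recovers $f_{i,i+1}$ by applying $\zeta$, and uniqueness in Lemma \ref{relativetofull} recovers $F$. The main obstacle I anticipate is not conceptual but bookkeeping: one must pin down precisely which partition $(V_1, V_2, V_3)$ corresponds to which election $e$ so that the cyclic indices line up in $\zeta(\pi_{i,i+1}(e)) = V_i$, and then check that the single unordered condition in Definition \ref{consistentsetfuncs} really does supply condition 2 of Lemma \ref{relativetofull} for every ordered triple of candidates — which, happily, comes down to the closure of the consistent multisets under negation.
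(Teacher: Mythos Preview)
Your proposal is correct and follows essentially the same route as the paper: both directions pass through the bijection $\zeta$ between $\powerset{V}$ and $A_{i,i+1}$, the forward direction builds the election $e$ from a given partition and checks $\zeta(\pi_{i,i+1}(e)) = V_i$, and the backward direction reduces condition~\ref{relativetofull:threeway} to the cyclic case using closure of the consistent multisets under negation. Your framing of the election/partition dictionary via ``voters who rank $c_i$ last'' is exactly the paper's computation $a_1+a_2+a_3=0$ in slightly different clothing.
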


\begin{proof}
Given $F$, the existence of the functions $f_{i,i+1}$ is given by the MIIA condition. Then $f_{i,i+1} \circ \zeta^{-1}$ composes a function $\powerset{V}\rightarrow A_{i,i+1}$ with a function $A_{i,i+1} \rightarrow \set{W,T,L}$, so it is a valid function $g_i$. We need to show that these $(g_i)_i$ are consistent.

Take a partition of $V$ into $V_1 \sqcup V_2 \sqcup V_3$. Then consider the election $e \in \mathfrak{C}^V$ given by $e(v_i) = c_{i+1} c_{i+2} c_i$ whenever $v_i \in V_i$. By the CC condition this election is in our election domain. Now setting $\pi_{i,i+1}(e)=a$, we have $a(v) = -2$ for $v \in V_i$ and $a(v) = 1$ elsewhere. Thus $\zeta(a) = V_i$, and

\begin{equation}
\begin{aligned}
g_i(V_i) &= f_{i,i+1} \circ \zeta^{-1}(V_i) \\
&= f_{i,i+1}(a) \\
&= f_{i,i+1}(\pi_{i,i+1}(e)) \\
&= \pi_{i,i+1}(F(e))
\end{aligned}
\end{equation}

Hence $\set{g_i(V_i)}_i$ is a consistent multiset as required.

On the other hand, given $(g_i)_i$, we can construct $F$ if and only if $f_{i,i+1} = g_i \circ \zeta$ and $f_{i+1,i}(a) = -f_{i,i+1}(-a)$ satisfy the conditions of Lemma \ref{relativetofull}. Condition \ref{relativetofull:twoway} holds by the definition of $f_{i+1,i}$. In order to check condition \ref{relativetofull:threeway} we can assume that the cyclic order of $i$, $j$ and $k$ is ``even'' - i.e., $j=i+1$ - or else we apply condition \ref{relativetofull:twoway} to negate each element in the multiset without altering its consistency.

Now let $a_i = \pi_{i,i+1}(e)$ for all $i$, and note that $a_1+a_2+a_3 = 0$. But each $a_i \in \set{1,-2}^V$, so for each $v$ we have $a_i(v)=-2$ for exactly one value of $i$. In other words, the sets $V_i = \zeta(a_i)$ form a partition of $V$. Then $f_{i,i+1}(a_i) = g_i \circ \zeta(a_i) = g_i(V_i)$, and we know by assumption that these values form a consistent multiset. Thus both conditions of Lemma \ref{relativetofull} are met, so we do get a SWF $F$.

To see that these operations reverse one another, note that in both cases we used $\zeta$ or $\zeta^{-1}$ to move between $g_i$ and $f_{i,i+1}$. Moreover, the $F$ constructing from $f_{i,j}$ in Lemma \ref{relativetofull} is defined to have corresponding relative SWFs $f_{i,j}$.
\end{proof}

\subsection{Borda Rules}\label{section:borda}

We recall the weighted Borda rules:

\begin{defn}[(Weighted) Borda rule]

For a finite set of voters $V$ with weights $(w_v)_v$, the weighted Borda rule $B_w$ is defined as follows. We define

\begin{equation}
b_w(c_i,e) = \sum_{v \in V} e(v)(c_i)w_v
\end{equation}

Now we let $B_w = \Phi(b_w)$.

\end{defn}

Now $\pi_{i,j}(B_w(e)) = \varphi(b_w(c_i,e) - b_w(c_j,e))$, so we let

\begin{equation}\label{eq:relativeborda}
\begin{aligned}
d_w(c_i,c_j,e) &= b_w(c_i,e)-b_w(c_j,e) \\
&= \sum_v [e(v)(c_i)-e(v)(c_j)] w_v \\
&= \sum_{v \in V} \pi_{i,j}(e)(v) w_v
\end{aligned}
\end{equation}

Thus $d_w$ is a function of $\pi_{i,j}(e)$, and we can write $d_w(c_i,c_j,e) = d_w(\pi_{i,j}(e))$. Then $B_w$ satisfies MIIA with $f_{i,j} = \varphi \circ d_w$. Moreover, $b_w$ is defined symmetrically for all $c_i \in C$, so $B_w$ satisfies the neutrality condition.

We call a Borda rule ``positive'' if $w > 0$ across $V$ and ``non-negative'' if $w \geq 0$ across $V$, and so on for ``negative'' and ``non-positive''. Equation (\ref{eq:relativeborda}) means that a non-negative Borda rule satisfies PR, and that a non-negative Borda rule with $\sum_V w_v >0$ satisfies P.

If $w$ is constant across all of $V$ then the Borda rule is ``unweighted''. Since scaling $b_w$ by a positive scalar doesn't affect $\Phi$, there are only three such rules: the positive unweighted Borda rule with $w\equiv 1$, the negative unweighted Borda rule with $w \equiv -1$, and the ``tie rule'' with $w \equiv 0$, in which case the SWF always returns a tie between all candidates in $C$.

We demonstrated in \autocite{paper1} that an unweighted Borda rule satisfies anonymity, so it certainly satisfies transitive anonymity.

Between Borda and non-Borda SWFs we also considered ``weakly Borda'' SWFs, which agree with a Borda rule $B_w$ (for $w \not\equiv 0$) whenever it is decisive but are sometimes able to break ties.

\begin{defn}[Weakly Borda]
A SWF $F$ is weakly Borda if there exists a Borda rule $B_w$ for $w \not\equiv 0$ (i.e. not the tie rule) such that for all $e$, and for two candidates $c_i$ and $c_j$, we have $\pi_{i,j}(B_w(e)) \neq T \Rightarrow \pi_{i,j}(F(e)) = \pi_{i,j}(B_w(e))$. We also say that $F$ is weakly Borda with respect to $B_w$.
\end{defn}

Note that any Borda rule except for the tie rule is weakly Borda with respect to itself. We call a rule ``strongly non-Borda'' if it is not weakly Borda or the tie rule.

In \autocite{paper1} we demonstrated that under full anonymity, any SWF which is weakly Borda is also weakly Borda with respect to an unweighted Borda rule. Intuitively, a non-constant weight function $w$ implies that some voters are more powerful than others, which contradicts anonymity.

\begin{lem}\label{oldweightinglemma}
For an election domain $E$ containing a ballot domain, if a SWF $F$ fulfils condition A and is weakly Borda with respect to a Borda rule $B_w$ then it is weakly Borda with respect to an unweighted Borda rule $B$.
\end{lem}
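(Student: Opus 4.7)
The plan is to use the freedom granted by anonymity to convert $B_w$ into an unweighted Borda rule via voter permutations, so that the weakly Borda hypothesis for $B_w$ transfers. Concretely, anonymity implies $\pi_{i,i+1}(F(e))$ depends only on the multiset of $\pi_{i,i+1}(e)$, and on the CC domain this collapses to a function $\alpha_i(k)$ of $k := |\{v : \pi_{i,i+1}(e)(v) = -2\}|$ alone (and analogously for each other ordered pair of candidates).

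Next I would show that $\sum_v w_v \neq 0$. For any $e$ with exactly one voter $v_0$ satisfying $\pi_{i,i+1}(e)(v_0) = -2$ (so $k = 1$), the weighted relative Borda reads $\pi_{i,i+1}(B_w(e)) = \varphi\bigl(\sum_v w_v - 3 w_{v_0}\bigr)$. If $\sum_v w_v = 0$ this reduces to $\varphi(-3 w_{v_0})$, which lies in $\{W,L\}$ whenever $w_{v_0} \neq 0$, and then the weakly Borda hypothesis forces $\alpha_i(1) = \varphi(-3 w_{v_0})$. Since $w \not\equiv 0$ such a $v_0$ exists, and since $\alpha_i(1)$ is a single value, all nonzero $w_v$ must share a common sign; combined with $\sum_v w_v = 0$ this forces $w \equiv 0$, a contradiction.

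Assume WLOG $\sum_v w_v > 0$, and let $B$ be the positive unweighted Borda rule. Given $e \in E$ with $B(e)$ decisive on the pair $(c_i, c_{i+1})$, say with $\pi_{i,i+1}(B(e)) = W$ (equivalently $k < n/3$), pick a $k$-subset $S_{\min} \subseteq V$ minimising $W_{S_{\min}} := \sum_{v \in S_{\min}} w_v$; averaging over $k$-subsets yields $W_{S_{\min}} \leq (k/n)\sum_v w_v < \tfrac{1}{3}\sum_v w_v$. Choose any $\sigma \in \Sym(V)$ with $\sigma(S_{\min}) = S := \{v : \pi_{i,i+1}(e)(v) = -2\}$; then $\sigma(e)$ has its $-2$ values exactly on $S_{\min}$, so $\pi_{i,i+1}(B_w(\sigma(e))) = \varphi\bigl(\sum_v w_v - 3 W_{S_{\min}}\bigr) = W$. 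The weakly Borda hypothesis for $B_w$, together with $F(\sigma(e)) = F(e)$ by anonymity, then delivers $\pi_{i,i+1}(F(e)) = W$. The $L$ case is symmetric via a maximum-weight $k$-subset, and weakly Borda is vacuous when $k = n/3$; the case $\sum_v w_v < 0$ is handled identically with the negative unweighted Borda rule.

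The main obstacle is step two, the exclusion of $\sum_v w_v = 0$. Without this, the averaging bound has nothing to say and no non-tie unweighted Borda rule can be made to agree with $B_w$ on the requisite permuted elections, so the construction collapses. Once $\sum_v w_v \neq 0$ is established, the remainder is essentially a rearrangement argument: anonymity erases which specific voters are ``demoted'', while $B_w$ retains exactly the sensitivity to voter identity that the bound $(k/n)\sum_v w_v < \tfrac{1}{3}\sum_v w_v$ (for $k < n/3$) is designed to exploit.
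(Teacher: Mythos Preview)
Your two-step strategy---first exclude $\sum_v w_v = 0$, then permute voters so that $B_w$ becomes decisive with the sign dictated by the unweighted rule---is exactly the shape of the paper's argument. The paper does not prove this lemma separately (it is cited from \autocite{paper1}); the proof given is of the transitive-anonymity generalisation, Lemma~\ref{weightinglemma}, of which the present statement is the special case $G = \Sym(V)$. There the averaging is carried out over the group: one shows $\sum_{g \in G} d_w(\pi_{i,j}(g^{-1}(e))) = (|G|/|V|)\, d_S(\pi_{i,j}(e))$, so some $g$ makes $B_w$ decisive with the correct sign. Your minimal-weight $k$-subset argument is the $\Sym(V)$-specific, CC-specific incarnation of the same averaging.

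Two issues with the write-up. First, the lemma is stated for an arbitrary ballot domain, but your computations are tied to the Condorcet cycle structure (the identities $d_w = \sum_v w_v - 3 w_{v_0}$ and $\pi_{i,i+1}(B(e)) = W \Leftrightarrow k < n/3$ both use that the relative election takes values in $\{1,-2\}$). The paper's proof instead picks any two ballots $r_1 \neq r_2 \in \mathfrak{B}$, locates candidates $c_i,c_j$ with $r_1(c_i)-r_1(c_j) > 0 > r_2(c_i)-r_2(c_j)$, and runs the argument with those integers, covering the general case. Second, your opening claim that anonymity makes $\pi_{i,i+1}(F(e))$ depend only on the multiset of $\pi_{i,i+1}(e)$---hence on $k$ alone---is not justified: anonymity says $F(e)$ depends only on the multiset of $e$, and on the CC domain two elections with the same $k$ can still have different ballot multisets (the $n-k$ non-demoting voters may split differently between the remaining two Condorcet ballots), so $\alpha_i(k)$ is not well-defined without MIIA, which is not assumed. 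Fortunately your actual arguments do not need $\alpha_i(k)$: in step two it suffices to compare single-demoter elections in which the other $n-1$ voters all cast one fixed ballot (those genuinely share a multiset), and in step three you already use anonymity directly via $F(\sigma(e)) = F(e)$.
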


This intuition extends to the case of transitive anonymity, which allows $k$-sets of voters to be distinguishable but requires each individual voter to look like any other. We formalise this with an extension of Lemma \ref{oldweightinglemma} to finite elections with condition TA:

\begin{lem}\label{weightinglemma}
For an election domain $E$ on a finite set of voters $V$ and containing a ballot domain, if a SWF $F$ fulfils condition TA and is weakly Borda with respect to a Borda rule $B_w$ then it is weakly Borda with respect to an unweighted Borda rule $B$.
\end{lem}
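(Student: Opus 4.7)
The plan is to average the weight $w$ over the action of $G$ in order to replace it by a constant weight function, and to show that $F$ remains weakly Borda with respect to the resulting unweighted rule. The averaging is legitimate because the class of weight functions for which $F$ is weakly Borda is closed under non-negative linear combinations, and transitivity forces the average to be constant.

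First I would prove that if $F$ is weakly Borda with respect to both $B_{w_1}$ and $B_{w_2}$, and $w_1+w_2 \not\equiv 0$, then $F$ is weakly Borda with respect to $B_{w_1+w_2}$. The key observation is that for no relative election $a$ can $d_{w_1}(a)$ and $d_{w_2}(a)$ have strictly opposite signs, since otherwise $F$ would have to match both $c_i \succ c_j$ and $c_j \succ c_i$ simultaneously. Hence $d_{w_1+w_2}(a)$ can only be strictly positive when at least one of $d_{w_1}(a), d_{w_2}(a)$ is strictly positive (with the other non-negative), and in that case $F$ agrees with the sign of $d_{w_1+w_2}(a)$. Combined with the trivial observation that $B_{\lambda w}$ has the same decisive outcomes as $B_w$ for $\lambda>0$, this shows that the set $C := \set{w \in \mathbbm{R}^V : F \text{ is weakly Borda with respect to } B_w} \cup \set{0}$ is closed under non-negative linear combinations.

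Next, I would use TA to show that $w \circ g \in C$ for every $g \in G$: a direct reindexing of the sum defining $b_w$ gives $b_{w \circ g}(c_i, e) = b_w(c_i, g^{-1}(e))$, so $B_{w\circ g}(e) = B_w(g^{-1}(e))$ as weak orderings, and combining this with $F(e) = F(g^{-1}(e))$ from TA shows $F$ is weakly Borda with respect to $B_{w \circ g}$. Setting $\bar w = \sum_{g \in G} w \circ g$, closure of $C$ puts $\bar w \in C$; by the orbit-stabiliser theorem applied to a transitive action, $\bar w$ is constant on $V$, equal to $\frac{|G|}{|V|}\sum_v w_v$. When this constant is nonzero, $B_{\bar w}$ is a positive or negative unweighted Borda rule, and the proof is complete.

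The main obstacle is the case $\sum_v w_v = 0$, where $\bar w$ vanishes and averaging yields nothing useful. To rule this out, I would show that $C$ contains no pair $\set{w, -w}$ with $w \not\equiv 0$: since $w \not\equiv 0$, one can exhibit an election in the ballot domain on which $B_w$ is decisive (the unanimous election works when $\sum_v w_v \neq 0$; otherwise perturbing a single voter $v_*$ with $w(v_*) \neq 0$ gives a relative Borda difference of $-3w(v_*) \neq 0$ on the relevant pair), and on that election $B_{-w}$ gives the opposite decisive outcome, so $F$ cannot be weakly Borda with respect to both. Applied to $\bar w = 0$: fixing any $g_0 \in G$, the partial sum $-(w \circ g_0) = \sum_{g \neq g_0} w \circ g$ lies in $C$, and together with $w \circ g_0 \in C$ this forces $w \circ g_0 \equiv 0$, hence $w \equiv 0$, contradicting the hypothesis that $B_w$ is not the tie rule. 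The trickiest step is this pointedness argument, and specifically verifying that the decisive election sits inside any ballot domain.
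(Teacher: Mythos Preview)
Your proof is correct, and the overall strategy---average $w$ over $G$ via orbit-stabiliser, then treat the zero-sum case separately---matches the paper's. However, the packaging is genuinely different, and worth noting.

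The paper argues both halves directly at the level of individual elections. For $S = \sum_v w_v \neq 0$ it takes an arbitrary $e$ with $\pi_{i,j}(B_S(e)) = W$, averages $\sum_{g \in G} d_w(c_i,c_j,g^{-1}(e))$ to find one $g$ with $\pi_{i,j}(B_w(g^{-1}(e))) = W$, and then transports back via TA. For $S = 0$ it builds two explicit elections related by a single group element $g_0$ on which $B_w$ gives opposite decisive answers, contradicting $F(e_0) = F(g_0(e_0))$. Your version instead abstracts both steps into properties of the set $C = \{w' : F \text{ weakly Borda w.r.t.\ } B_{w'}\} \cup \{0\}$: closure under addition handles the averaging uniformly, and pointedness (no nontrivial $\{w',-w'\}$ in $C$) handles the zero-sum case by writing $-(w \circ g_0)$ as the sum over the other group elements. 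This is cleaner and would transfer more readily to other invariance groups or to infinite $G$; the paper's argument is more self-contained and avoids the small induction needed to pass from two-term closure to $|G|$-term closure.

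One minor slip: the relative Borda difference you quote as $-3w(v_*)$ is specific to the Condorcet cycle ballot; for a general ballot domain with $r_1(c_i)-r_1(c_j) = k_1 > 0$ and $r_2(c_j)-r_2(c_i) = k_2 > 0$ it is $-(k_1+k_2)w(v_*)$. This does not affect the argument, since you only need it to be nonzero, and the paper in fact sets up exactly these $k_1,k_2$ to make the same point.
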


\begin{proof}
Let $\mathfrak{B}$ be the ballot such that $E \supset \mathfrak{B}^V$, and recall that by the definition of a ballot domain, $| \mathfrak{B}| \geq 2$. Let $G$ be the group acting transitively on $V$ and preserving $F$. We take $r_1 \neq r_2 \in \mathfrak{B}$. Now for some $c_i \in C$, $r_1(c_i) \neq r_2(c_i)$; suppose without loss of generality that $r_1(c_i) > r_2(c_i)$. Then $r_1^{-1}([0,r_1(c_i)-1])$ is a subset of $C$ of size $r_1(c_i)$, larger than $r_2^{-1}([0,r_2(c_i)-1])$; so there is some $c_j \in r_1^{-1}([0,r_1(c_i)-1]) \setminus r_2^{-1}([0,r_2(c_i)-1])$. Now $r_1(c_j) \in [0,r_1(c_i)-1]$ so $r_1(c_i)>r_1(c_j)$, but $r_2(c_j) \notin [0,r_2(c_i)-1]$ so instead $r_2(c_j)>r_2(c_i)$. We set $k_1 = r_1(c_i)-r_1(c_j)>0$ and $k_2 = r_2(c_j)-r_2(c_i)>0$.

Let $S = \sum_v w_v$. Then $B_S$ is in some sense the ``anonymised'' Borda rule. We will show that $S \neq 0$ and that $F$ is weakly Borda with respect to $B_S$.

First, suppose that $S = 0$. Define $V_+ = \set{v \in V: w_v > 0}$ and $V_- = \set{v \in V: w_v < 0}$. If $|V_+| = 0$ then $0 = \sum_V w_v = \sum_{V_-} w_v$, so $|V_-|=0$, in which case $B_w$ is the tie rule, so $F$ is not weakly Borda with respect to it. But the same argument applies if $|V_-|=0$, so instead both $V_+$ and $V_-$ are non-empty.

Picking $v_- \in V_-$ and $v_+ \in V_+$, we can find $g_0 \in G$ such that $g_0(v_+) = v_-$ since $G$ acts transitively on $V$; then $V_0 = V_- \cap g_0(V_+)$ is non-empty. We define $e_0$ to be the election in which voters in $V_0$ vote $r_1$ and voters outside of $V_0$ vote $r_2$; i.e. $\restr{e_0}{V_0} \equiv r_1$ and $\restr{e_0}{V_0^c} \equiv r_2$. We know that $e_0 \in E$ because it is in the ballot domain on $\mathfrak{B}$. Then 

\begin{equation}
\begin{aligned}
d_w(c_i,c_j,e_0) &= k_1 \sum_{v \in V_0} w_v - k_2 \sum_{v \notin V_0} w_v \\
&= k_1 \sum_{v \in V_0} w_v - k_2 \left(\sum_{v \in V} w_v - \sum_{v \in V_0} w_v \right) \\
&= (k_1 + k_2) \sum_{v \in V_0} w_v
\end{aligned}
\end{equation}

since $\sum_v w_v = S = 0$. But $V_0 \subset V_-$ so $\sum_{v \in V_0} w_v < 0$ and $d_w(c_i,c_j,e_0) < 0$; therefore $\pi_{i,j}(B_w(e_0)) = L$. Since $F$ is weakly Borda with respect to $B_w$ we find that $\pi_{i,j}(F(e_0)) = L$ as well.

On the other hand, $g_0(e_0)=e_1$ is the election in which voters in $g_0^{-1}(V_0) = V_1 \subset V_+$ vote $r_1$ and voters outside of $g_0^{-1}(V_0)$ vote $r_2$. Now

\begin{equation}
\begin{aligned}
d_w(c_i,c_j,e_1) &= k_1 \sum_{v \in V_1} w_v - k_2 \sum_{v \notin V_1} w_v \\
&= k_1 \sum_{v \in V_1} w_v - k_2 \left(\sum_{v \in V} w_v - \sum_{v \in V_1} w_v \right) \\
&= (k_1 + k_2) \sum_{v \in V_1} w_v
\end{aligned}
\end{equation}

Again $V_1 \subset V_+$ so $\sum_{v \in V_1} w_v > 0$ and $d_w(c_i,c_j,e_1)>0$; therefore $\pi_{i,j}(B_w(e_1)) = W$. Since $F$ is weakly Borda with respect to $B_w$ we find that $\pi_{i,j}(F_(e_1)) = W$ as well. But $g_0 \in G$ fixes $F$, so $F(e_1) = F(g_0(e_0)) = F(e_0)$, and $\pi_{i,j}(F(e_0)) = W$; this is a contradiction, so we conclude that $S\neq 0$ after all.

Now we can show that $F$ is weakly Borda with respect to the unweighted Borda rule $B_S$ where $S = \sum_V w_v$. Since $B_S$ is not the tie rule, we can take $e$, $c_i$ and $c_j$ where $\pi_{i,j}(B_S(e)) = W$. Fixing these, we have

\begin{equation}\label{eq:ibeatsj}
\sum_v S \pi_{i,j}(e)(v) > 0
\end{equation}

Moreover, for all $u \in U$ we can find $h \in G$ such that $h(u)=v$ (as $G$ acts transitively on $V$). Then consider the left coset $h\Stab(u)$. For $h_1 \in h\Stab(u)$ we have $h_1(u) = hg(u)$ with $g \in \Stab(u)$; then $hg(u)=h(u)=v$. For $h_2$ with $h_2(u)=v$ we have $h^{-1}h_2(u) = h^{-1}(v) = u$, so $h^{-1}h_2 \in \Stab(u)$ and $h_2 \in h\Stab(u)$. So this left coset is precisely the elements of $G$ that take $u$ to $v$, so $|\set{g: g(v)=u}| = |\Stab(u)| = |G|/|\orb(u)| = |G|/|V|$, because $G$ acts transtively on $V$. Then

\begin{equation}\label{eq:sumoverg}
\begin{aligned}
\sum_{g \in G} w_{g(v)} &= \sum_{u \in U} \sum_{g: g(v) = u} w_u \\
&= \sum_{u \in U} w_u |\set{g: g(v) = u}| \\
&= \sum_{u \in U} w_u |G| / |V| \\
&= S|G|/|V|
\end{aligned}
\end{equation}

Combining equations (\ref{eq:ibeatsj}) and (\ref{eq:sumoverg}) gives

\begin{equation}
\begin{aligned}
\sum_{g \in G} \sum_{v \in V} w_{g(v)} \pi_{i,j}(e)(v) &= 
\sum_{v \in V} \left[ \sum_{g \in G} w_{g(v)} \right] \pi_{i,j}(e)(v) \\
&= |G|/|V| \sum_{v \in V} S \pi_{i,j}(e)(v) > 0
\end{aligned}
\end{equation}

Then for some $g \in G$ we must have

\begin{equation}
\begin{aligned}
\sum_{v \in V} w_{g(v)} \pi_{i,j}(e)(v) &> 0 \\
\sum_{v \in V} w_{g(v)} \pi_{i,j}(g^{-1}(e))(g(v)) &> 0 \\
\sum_{u \in V} w_u \pi_{i,j}(g^{-1}(e))(u) &> 0
\end{aligned}
\end{equation}

so $\pi_{i,j}(B_w(g^{-1}(e))) = W$. Then since $F$ is weakly Borda with respect to $B_w$, $\pi_{i,j}(F(g^{-1}(e))) = W$. But $F$ satisfies the TA condition, so $\pi_{i,j}(F(e)) = W$. But this applied to any $e$, $c_i$ and $c_j$ with $\pi_{i,j}(B_S(e)) = W$, so $F$ agrees with $B_S$ whenever it gives a decisive ranking; and therefore $F$ is weakly Borda with respect to the unweighted Borda rule $B_S$ (which itself is not the tie rule) as required.
\end{proof}

We now proceed to the main results of this paper.

\section{Non-Borda SWFs satisfying transitive anonymity}\label{section:CTA}

In \autocite{paper1}, we saw that a SWF on finite $V$ satisfying conditions CC, MIIA, N and A must be a Borda rule. Now we show that replacing anonymity with the weaker transitive anonymity condition allows for strongly non-Borda SWFs, even when we impose both increasingness conditions, P and PR.

\begin{thm}\label{thm:FCTA12exist}
For all $n \geq 11$ not divisible by $3$, there exists a strongly non-Borda SWF on $V$ with $|V|=n$ and $C = \set{c_1,c_2,c_3}$ satisfying CC, MIIA, N, TA, P and PR.
\end{thm}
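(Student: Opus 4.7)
The plan is to apply Theorem~\ref{setfuncstofull} to reduce the construction of $F$ to that of a consistent $3$-tuple $(g_1, g_2, g_3)$ of set functions $\powerset{V} \to \set{W, T, L}$. Since the only candidate permutations preserving the Condorcet cycle domain $\mathfrak{C}^V$ are the two $3$-cycles of $C$, each of which cyclically permutes the $g_i$, condition N collapses to $g_1 = g_2 = g_3 =: g$. The remaining hypotheses then translate to: (i) $g$ is invariant under some transitive group $G$ on $V$ (TA); (ii) $g(\emptyset) = W$ and $g(V) = L$ (P); (iii) $g$ is antitone under set inclusion (PR); (iv) $\set{g(V_1), g(V_2), g(V_3)}$ is consistent for every $3$-partition of $V$; and (v) by Lemma~\ref{weightinglemma}, $g$ must decisively disagree with both the positive unweighted Borda function $g_B(U) = \varphi(n - 3|U|)$ and its negation.

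I would take $V = \mathbbm{Z}/n$ with $G = \mathbbm{Z}/n$ acting by cyclic shifts. Writing $n = 3m + r$ with $r \in \set{1, 2}$ (so $g_B(U) = W$ iff $|U| \leq m$), I define $g$ to agree with $g_B$ everywhere except on a single $G$-orbit $\mathcal{O}$ of $k$-subsets, taking $k = m$ for $r = 1$ (flipping $W$ to $L$ on $\mathcal{O}$) and $k = m+1$ for $r = 2$ (flipping $L$ to $W$). The critical demand is that $\mathcal{O}$ be an \emph{intersecting family}: $U \cap U' \neq \emptyset$ for all distinct $U, U' \in \mathcal{O}$. This ensures no $3$-partition $V = V_1 \sqcup V_2 \sqcup V_3$ can place two parts simultaneously in $\mathcal{O}$.

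Given such an $\mathcal{O}$, the verifications are routine. Pareto holds because $\emptyset, V \notin \mathcal{O}$; monotonicity holds because $\mathcal{O}$ lies in a single size layer, so in any chain $U_1 \subsetneq U_2$ at most one endpoint is modified, and a quick case check in $r=1,2$ gives $g(U_1) \geq g(U_2)$. For consistency, $3$-partitions with no $V_i \in \mathcal{O}$ inherit consistency from the (valid) Borda SWF. A partition with exactly one $V_i \in \mathcal{O}$, say $V_1$, has $|V_2| + |V_3| = n - k = 2m + 1$, so pigeonhole forces exactly one of $|V_2|, |V_3|$ to be $\leq m$: the Borda pair is $\set{W, L}$, and the flipped multiset is $\set{W, L, L}$ (for $r = 1$) or $\set{W, W, L}$ (for $r = 2$), both consistent. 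Partitions with two or more parts in $\mathcal{O}$ are excluded by the intersecting condition. Strong non-Borda-ness is then immediate: $g$ decisively disagrees with $g_B$ on any $U \in \mathcal{O}$ and with its negation at $\emptyset$.

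The main obstacle is exhibiting an intersecting cyclic orbit of $k$-subsets for every $n \geq 11$ not divisible by $3$. The necessary counting condition --- that the $k(k-1)$ nonzero differences of elements of any $U \in \mathcal{O}$ cover $\mathbbm{Z}/n \setminus \set{0}$, i.e.\ $k(k-1) \geq n - 1$ --- becomes $m(m-1) \geq 3m$ (equivalently $n \geq 13$) for $r = 1$ and $m(m+1) \geq 3m + 1$ (equivalently $n \geq 11$) for $r = 2$, matching exactly the theorem's range. The small cases admit explicit intersecting difference sets such as $\set{0, 1, 3, 5} \subset \mathbbm{Z}/11$ and the perfect difference set $\set{0, 1, 3, 9} \subset \mathbbm{Z}/13$; for larger $n$, an explicit combinatorial construction (e.g., a Sidon-like arithmetic progression augmented by carefully spaced extra elements) or a counting argument over the roughly $\binom{n}{k}/n$ cyclic orbits available produces a suitable $U$.
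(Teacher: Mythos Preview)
Your proposal is correct and follows essentially the same route as the paper: reduce via Theorem~\ref{setfuncstofull} to a single set function $g$, take $g$ equal to the unweighted Borda function except on one cyclic orbit $\mathcal{O}$ of $m$-sets (when $n=3m+1$) or $(m{+}1)$-sets (when $n=3m+2$), and use the intersecting property of $\mathcal{O}$ to rule out inconsistent $3$-partitions. The verifications of P, PR, consistency, and strong non-Borda-ness you outline match the paper's Lemma~\ref{bordalike} and the surrounding arguments.

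The one place where your write-up is thinner than the paper is the existence of the intersecting cyclic orbit itself. You give explicit sets for $n=11,13$ and then gesture at ``a Sidon-like construction or a counting argument'' for general $n$; neither of these is actually carried out, and the counting heuristic, while plausible since $k(k-1)\gg n$, is not a proof as stated. The paper instead exhibits a uniform explicit family: for $n=3k+1$ it takes $A=\{1,8\}\cup\{3i:3\le i\le k\}$, and for $n=3k+2$ it takes $B=\{1,3,4,8\}\cup\{3i:4\le i\le k\}$, then checks case by case that the difference set $A-A$ (respectively $B-B$) exhausts $\mathbbm{Z}/n$. This is short and avoids any probabilistic or asymptotic reasoning. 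Filling in your sketch with a comparable explicit construction would make the argument complete.
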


\begin{proof}

We will construct a set function $g: \powerset{V} \rightarrow \set{W,L}$, and use Theorem \ref{setfuncstofull} to find the full SWF $F$ corresponding to the tuple $(g,g,g)$.

If $n=3k+1$, we will require an intersecting system $\mathcal{A} \subset \slice{V}{k}$ which is fixed by a transitive group action $G$ on $V$. On the other hand, if $n=3k+2$, we require a similar intersecting system $\mathcal{B} \subset \slice{V}{k+1}$. We delay the construction of both families of systems to Lemma \ref{intersecting}.

Now for $n=3k+1$, we define $g(U) = L$ whenever one of the following two conditions holds:
\begin{enumerate}
\item $U \in \mathcal{A}$
\item $|U| \geq k+1$
\end{enumerate}

We let $g(U)=W$ otherwise. Similarly, for $n=3k+2$, we define $g(U) = W$ whenever one of the following two conditions holds:
\begin{enumerate}
\item $U \in \mathcal{B}$
\item $|U| \leq k$
\end{enumerate}

We let $g(U)=L$ otherwise. Now that $g$ is defined for both $n=3k+1$ and $n=3k+2$, we apply Lemma \ref{setfuncstofull} to construct $F$.

\begin{lem}
Taking $g_i=g$ for all $i$, the conditions of Lemma \ref{setfuncstofull} are met.
\end{lem}

\begin{proof}
The definition of $f$ on $A_{i+1,i}$ guarantees that condition \ref{relativetofull:twoway} is met. We now turn to condition \ref{relativetofull:threeway}. Since there are only three candidates and the definition of the multiset is cyclic in $i$, $j$ and $k$, we can assume that $i=1$. Also, condition \ref{relativetofull:twoway} means that we can reverse the cyclic order of $i$, $j$ and $k$ without altering whether the multiset is consistent, so we can assume that $j=2$ and $k=3$.

Now suppose the election is $e \in E$. We let $e^{-1}(c_1 c_2 c_3) = V_3$, $e^{-1}(c_2 c_3 c_1) = V_1$ and $e^{-1}(c_3 c_1 c_2) = V_2$. Then $V_1 \sqcup V_2 \sqcup V_3 = V$. Furthermore, $\pi_{i,i+1}(e) = \mathbbm{1} - 3\cdot \mathbbm{1}_{v \in V_i}$, so $f(\pi_{i,i+1}(e)) = g(\zeta(\mathbbm{1} - 3\cdot \mathbbm{1}_{v \in V_i})) = g(V_i)$. So it suffices to show that for any partition $V = V_1 \sqcup V_2 \sqcup V_3$ we have $\set{g(V_1), g(V_2), g(V_3)}$ consistent.

Now since $g(U) \neq T$ for all $U$, the only inconsistent multisets possible are $\set{W,W,W}$ and $\set{L,L,L}$. It remains to rule out these possibilities in both the cases of $n=3k+1$ and $n=3k+2$.

Suppose $n=3k+1$ and $g(V_1)=g(V_2)=g(V_3)=W$. Then we must have $|V_i| < k+1$ for all $i$, so in fact $|V_1| + |V_2| + |V_3| \leq 3k < |V|$, a contradiction. Similarly if $n=3k+2$ and $g(V_1) = g(V_2) = g(V_3) = L$ then we must have $|V_i| > k$ for all $i$, so in fact $|V_1| + |V_2| + |V_3| \geq 3k+3 > |V|$, a contradiction.

On the other hand, if $n=3k+1$ and $g(V_1) = g(V_2) = g(V_3) = L$, then $|V_i| \geq k$ for all $i$, so we must have $|V_i|, |V_{i+1}| = k$ and $|V_{i+2}| = k+1$ for some $i$. Since $|V_i|, |V_{i+1}| = k$, we can only have $g(V_i) = g(V_{i+1})=L$ if both $V_i, V_{i+1} \in \mathcal{A}$; but $\mathcal{A}$ is intersecting whereas $V_i$ and $V_{i+1}$ are disjoint, a contradiction.

Similarly, if $n=3k+2$ and $g(V_1) = g(V_2) = g(V_3) = W$ then $|V_i| \leq k+1$ for all $i$, so we must have $|V_i|, |V_{i+1}| = k+1$ and $|V_{i+2}| = k$ for some $i$. Since $|V_i|, |V_{i+1}| = k+1$, we can only have $g(V_i) = g(V_{i+1})=W$ if both $V_i, V_{i+1} \in \mathcal{B}$; but $\mathcal{B}$ is intersecting whereas $V_i$ and $V_{i+1}$ are disjoint, a contradiction.

Thus the multiset is always consistent, and condition \ref{relativetofull:threeway} is met; so a full SWF $F$ exists as required.
\end{proof}

Now we confirm that $F$ meets all the required conditions. $F$ meets condition CC because the domains of $f_{i,i+1}$ and $f_{i+1,i}$ together cover all of $\cup_{i,j} \pi_{i,j}(\mathfrak{C}^V)$. Furthermore, $F$ fulfils condition MIIA by construction. Define $f: A \rightarrow \set{W,L}$ by $f(a) = g \circ \zeta(a)$ for $a \in A_{i,i+1}$ and $f(a) = -g \circ \zeta(-a)$ for $a \in A_{i+1,i}$; then $f$ serves as the relative SWF for all pairs of candidates, and $F$ fulfils condition N.

The intersecting systems $\mathcal{A}$ and $\mathcal{B}$ are both defined to be fixed by a transitive group action on $V$, so $g$ is transitive-symmetric, and so is $f$; thus the SWF $F$ is also a transitive-symmetric function and fulfils condition TA. It remains to show that conditions P and PR are met, and that $F$ is strongly non-Borda. We prepare for these results by comparing $F$ to the positive Borda rule $B_1$; then the difference between Borda scores for two candidates with $\pi_{i,j}(e) = a$ is $d_1(a) = \sum_{v \in V} a(v)$.

\begin{lem}\label{bordalike}
For $a \in A$ with $d_1(a)>1$, $f(a) = W$. For $a \in A$ with $d_1(a)<-1$, $f(a) = L$. For $d = \pm 1$, there exist $a_W$ and $a_L$ with $d_1(a_W) = d_1(a_L) = d$ and $f(a_X)=X$ for both $X \in \set{W,L}$.
\end{lem}

\begin{proof}
We deal with $a \in A_{i,i+1}$; then for $-a \in A_{i,i+1}$ the same results follow immediately. Now $a \in \set{1,-2}^V$, so $d_1(a) = |a^{-1}(1)| -2 |a^{-1}(-2)| = n - 3|\zeta(a)|$. Then if $d_1(a) > 1$ we have $|\zeta(a)| < \frac{n-1}{3}$. If $n = 3k+1$ then $|\zeta(a)| \leq k-1$ and if $n=3k+2$ then $|\zeta(a)| \leq k$; in either case we have $g(\zeta(a)) = W$ as required. Similarly, if $d_1(a) < -1$ we have $|\zeta(a)| > \frac{n+1}{3}$. If $n=3k+2$ then $|\zeta(a)| \geq k+2$ and if $n=3k+1$ then $|\zeta(a)| \geq k+1$; in either case we have $g(\zeta(a)) = L$ as required.

As for the cases of $d_1(a) = \pm 1$, if $n = 3k+1$ then for $U \in \slice{V}{k}$ we have $d_1(\zeta^{-1}(U)) = 1$, so picking $U_L \in \mathcal{A}$ and $U_W \notin \mathcal{A}$ we find $a_L = \zeta^{-1}(U_L)$ and $a_W = \zeta^{-1}(U_W)$ such that $d_1(a_X)=1$ but $f(a_X)=X$ for $X \in \set{W,L}$. Then $-a_W$ and $-a_L$ provide the same examples for $d_1(a)=-1$. Meanwhile if $n=3k+2$ then for $U \in \slice{V}{k+1}$ we have $d_1(\zeta^{-1}(U)) = -1$, so picking $U_W \in \mathcal{B}$ and $U_L \notin \mathcal{B}$ we find $a_W = \zeta^{-1}(U_W)$ and $a_L = \zeta^{-1}(U_L)$ such that $d_1(a_X)=-1$ but $f(a_X)=X$ for $X \in \set{W,L}$. Then $-a_L$ and $-a_W$ provide the same examples for $d_1(a)=1$.
\end{proof}

Now we are ready to complete the theorem.

\begin{lem}
$F$ satisfies condition P.
\end{lem}

\begin{proof}
We must check all $a \in A$ with $a > 0$. For any such $a$ we have $d_1(a) \geq n > 1$, so by Lemma \ref{bordalike} we find that $f(a)=W$ as required.
\end{proof}

\begin{lem}
$F$ satisfies condition PR.
\end{lem}

\begin{proof}
Let $a_1 \geq a_2 \in A$ and suppose for a contradiction that $f(a_1)<f(a_2)$; since $f(a) \neq T$ for all $a$, this means that $f(a_1) = L$ and $f(a_2) = W$. Now $f(a_1) = W$ as long as $d_1(a_1) > 1$ so we must have $d_1(a_1)\leq 1$. Similarly we must have $d_1(a_2) \geq -1$. But $a_1 \geq a_2$ and $a_1 \neq a_2$, so we must have $d_1(a_1) > d_1(a_2)$, and for $n = 3k+1$ or $n=3k+2$ there are no solutions to $d_1(a)=0$, so we know that $d_1(a_1) = 1$ and $d_1(a_2) = -1$.

But then we must have $a_1 \in A_{i,i+1}$ and $a_2 \in A_{i+1,i}$ or vice versa, so in fact $a_1(v) > a_2(v)$ for all $v$, and $1 = d_1(a_1) \geq d_1(a_2)+n = n-1$, and $n\leq 2$, whereas we have assumed that $n \geq 11$. This is a contradiction, so in fact $f(a_1) \geq f(a_2)$ after all and the PR condition holds.
\end{proof}

Next we will show that this SWF is strongly non-Borda. By Lemma \ref{weightinglemma} we need only eliminate the possibility that $F$ is the tie rule, or that it is weakly Borda with respect to the positive or negative unweighted Borda rules. Lemma \ref{bordalike} gives that there exists $a_W, a_L \in A$ with $d_1(a_W) = d_1(a_L) =1$ giving the decisive results $f(a_W)=W$ and $f(a_L)=L$, which rules out the tie rule; the existence of $a_W$ rules out the negative unweighted Borda rule, and the existence of $a_L$ rules out the positive unweighted Borda rule (as $\varphi \circ d_1(a_L) = W$ disagreeing with $f$, and $\varphi \circ d_{-1}(a_W) = L$ disagreeing with $f$).

Therefore $F$ is strongly non-Borda as required.
\end{proof}

\begin{rem}
This construction can be extended to one on the unrestricted domain by following the positive unweighted Borda rule whenever any two voters sit in different Condorcet cycles, but following $F$ either when all voters sit the first Condorcet cycle, or when all voters sit in the second Condorcet cycle.
\end{rem}

We now construct the required intersecting systems $\mathcal{A}$ and $\mathcal{B}$.

\begin{lem}\label{intersecting}
For all $k \geq 4$ and for $V = \set{1, \dots, 3k+1}$ there exists an intersecting system of sets $\mathcal{A} \subset \slice{V}{k}$ which is fixed by the action of the group $\mathbbm{Z}_{3k+1}$ acting on $V$ in the natural way.

Moreover, for all $k \geq 3$ and for $V = \set{1, \dots, 3k+2}$, there exists an intersecting system of sets $\mathcal{B} \subset \slice{V}{k+1}$ which is fixed by the action of the group $\mathbbm{Z}_{3k+2}$ acting on $V$ in the natural way.
\end{lem}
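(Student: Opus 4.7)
The plan is to construct both $\mathcal{A}$ and $\mathcal{B}$ as single orbits under the natural cyclic action on $V$. For $\mathcal{A}$ I will look for a $k$-subset $A \subset \mathbbm{Z}_{3k+1}$ whose difference set $A - A = \{a - b : a, b \in A\}$ equals all of $\mathbbm{Z}_{3k+1}$, and similarly for $\mathcal{B}$ a $(k+1)$-subset of $\mathbbm{Z}_{3k+2}$ with full difference set. Given such a set, the orbit $\mathcal{A} = \{A + t : t \in \mathbbm{Z}_{3k+1}\}$ is automatically cyclically invariant, and $(A+s) \cap (A+t) \neq \emptyset$ iff $t - s \in A - A$, which holds by construction. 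The naive count $k(k-1) \geq 3k$ (forcing $k \geq 4$) and $k(k+1) \geq 3k+1$ (forcing $k \geq 3$) shows the hypotheses on $k$ are sharp for this approach.

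For $\mathcal{B}$ the explicit family $B = \{0, 1, 2, \ldots, k-1\} \cup \{2k+2\} \subset \mathbbm{Z}_{3k+2}$ works for every $k \geq 3$: the internal differences $\pm 1, \ldots, \pm(k-1)$ contribute $\{1, \ldots, k-1\} \cup \{2k+3, \ldots, 3k+1\}$, the differences $2k+2-j$ contribute $\{k+3, \ldots, 2k+2\}$, and the differences $j - (2k+2)$ contribute $\{k, \ldots, 2k-1\}$; these pieces together cover $\mathbbm{Z}_{3k+2} \setminus \{0\}$ exactly when $2k-1 \geq k+2$.

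For $\mathcal{A}$ the analogous choice $A = \{0, 1, \ldots, k-2\} \cup \{2k+2\} \subset \mathbbm{Z}_{3k+1}$ handles every $k \geq 6$ by the same type of computation, yielding $\{1, \ldots, k-2\} \cup \{k-1, \ldots, 2k-3\} \cup \{k+4, \ldots, 2k+2\} \cup \{2k+3, \ldots, 3k\}$, which exhausts $\mathbbm{Z}_{3k+1} \setminus \{0\}$ precisely when $2k-3 \geq k+3$. For the remaining cases I will exhibit $A$ by hand: for $k = 4$ the classical planar difference set $A = \{0, 1, 3, 9\} \subset \mathbbm{Z}_{13}$, and for $k = 5$ the set $A = \{0, 1, 4, 9, 11\} \subset \mathbbm{Z}_{16}$, both verified by enumerating the $k(k-1)$ nonzero ordered differences and checking every nonzero residue appears.

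The main obstacle is the tightness at $k = 4, 5$ for $\mathcal{A}$: the uniform ``interval plus one extra point'' construction misses one or two residues, so one has to fall back on an ad hoc choice in those cases --- in particular, invoking the planar difference set in $\mathbbm{Z}_{13}$ for $k = 4$. Once the right sets are in hand, the remainder of the argument is just elementary modular arithmetic.
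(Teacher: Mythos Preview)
Your argument is correct and follows the same high-level route as the paper: take $\mathcal{A}$ (respectively $\mathcal{B}$) to be the single cyclic orbit of a well-chosen base set, and verify that the difference set of the base set is all of $\mathbbm{Z}_n$, which is exactly the intersecting condition for a cyclic orbit. Your reduction and your computations for $B=\{0,\dots,k-1\}\cup\{2k+2\}$ in $\mathbbm{Z}_{3k+2}$, for $A=\{0,\dots,k-2\}\cup\{2k+2\}$ in $\mathbbm{Z}_{3k+1}$ when $k\geq 6$, and for the two ad hoc sets at $k=4,5$ all check out.

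The only real difference from the paper is the choice of base set. The paper takes $A=\{1,8\}\cup\{3i:3\leq i\leq k\}$ and $B=\{1,3,4,8\}\cup\{3i:4\leq i\leq k\}$, which are less transparent than your ``interval plus a point'' but have the advantage of working uniformly for every $k\geq 4$ (respectively $k\geq 3$) without needing separate treatment of small $k$. Your construction is conceptually cleaner and makes the parameter threshold visible ($2k-1\geq k+2$ and $2k-3\geq k+3$), at the cost of the two special cases; either trade-off is perfectly acceptable here.
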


\begin{proof}
We define $A = \set{1, 8} \cup \set{3i|3 \leq i \leq k}$. We write $A_j$ for $j +A_0$, the shift of $A$ by $j$ in the integers modulo $3k+1$, and we let $\mathcal{A} = \set{A_j | 0 \leq j < 3k+1}$. Clearly this is fixed by the group action. Note also that $|A_j|=k$ for all $j$ as required. All that remains is to show that $\mathcal{A}$ is intersecting. This means showing that $A_i \cap A_j$ is non-empty for all $i$ and $j$, or alternatively that $i - j \in A-A$ for all $i-j$, or in other words for all $0 \leq m < 3k+1$.

$A-A$ trivially contains $0$. For $m=3i+1$ with $0 \leq i \leq k-3$, we have $3(i+3) - 8 = m \in A-A$. For $m=3i+2$ with $0 \leq i \leq k-3$ we have $1 - 3(k-i) = m \in A-A$. For $m=3i$ with $1 \leq i \leq k-3$ we have $3(i+3)-9 = m \in A-A$. This covers all values of $m$ from $0$ up to $3(k-2)$. Then for $m \geq 3k-5$ we have $-m \leq 6 \leq 3(k-2)$, so these values are also all in $A-A$ as the set is symmetrical about zero. Thus $A-A$ contains all of $\mathbbm{Z}_{3k+1}$, so $\mathcal{A}$ is intersecting as required.

On the other hand, we define $B = \set{1, 3, 4, 8} \cup \set{3i | 4 \leq i \leq k}$. Again we let $B_j = B + j$ and $\mathcal{B} = \set{B_j | 0 \leq j < 3k+2}$, which is fixed by the group action and a subset of $\slice{V}{k+1}$, and again we need to show that $B-B$ covers all $m$ with $0 \leq m < 3k+2$.

$B-B$ trivially contains $0$. We have $1 = 4-3 \in B-B$, $2 =3-1 \in B-B$, $3 = 4-1 \in B-B$, $4 = 8-4 \in B-B$ and $5 = 8-3 \in B-B$. Now for $m = 3i$ with $2 \leq i \leq k-2$ we have $4 \in B$ and $3(k+2-i) \in B$, so $4-3(k+2-i) = m \in B-B$. For $m=3i+1$ with $2 \leq i \leq k-1$ we have $8 \in B$ and $3i+9 \in B$ (if $i=k-2$ then $3i+9 = 1 \in B$ and if $i=k-1$ then $3i+9 = 4 \in B$), so $3i+9-8 =m \in B-B$. Finally for $m=3i+2$ with $2 \leq i \leq k-3$ we have $3 \in B$ and $3(k+2-i) \in B$, so $3-3(k+2-i) = m \in B-B$. This covers all $m$ up to $3k-5$, and up to $5$; but by the symmetry of $B-B$ about zero we have also covered all $m$ at least $7$. If we have missed anything out, then $3k-5\leq 5$, so $k=3$ and only $6$ has been omitted, but then $6=-5$ so we have covered all values. Hence $B-B$ contains all of $\mathbbm{Z}_{3k+2}$, so $\mathcal{B}$ is intersecting as required.
\end{proof}

We have established that strongly non-Borda rules exist meeting conditions CC, MIIA, N, TA, P and PR; but we have also seen that these rules only disagree with the Borda rule when the Borda scores are extremely close. In particular, if $f(a) = W$ then $d_1(a) \geq -1$ and if $f(a)=L$ then $d_1(a) \leq 1$. In the next sections we give results quantifying this closeness, showing in particular that if $n = 3k$, the only valid SWFs are Borda.

\section{SWFs on electorates divisible by three are Borda}\label{FCTA3}

We consider a set of voters $V$ with $|V| = 3k$; in fact, for convenience we will label the voters with integers, so $V = \set{1, \dots, 3k}$. We let $F$ be a SWF satisfying CC, MIIA, TA and N. By Theorem \ref{setfuncstofull}, $F$ is uniquely determined by a function $g: \powerset{V} \rightarrow \set{W,T,L}$ for which $(g,g,g)$ is a consistent $3$-tuple of functions as defined in Definition \ref{consistentsetfuncs}, and $F$ satisfies TA if and only if $g$ is a transitive-symmetric function.

In order to find all possible $g$, we will first restrict it to the $k$-slice $S = \slice{V}{k} \subset \powerset{V}$. The consistency condition on $g$ means that we must have $\set{g(V_1),g(V_2),g(V_3)}$ a consistent multiset whenever $V_1 \sqcup V_2 \sqcup V_3$ with $|V_i| = k$. We define the family of such tuples to be $\slice{S}{3}$; to be precise, we have

\begin{equation}
\slice{S}{3} = \set{(V_1,V_2,V_3) | V_i \in S, V_1 \sqcup V_2 \sqcup V_3 = V}
\end{equation}

As is standard, we write the uniform distribution over a set $X$ as $U(X)$; so $U(S)$ is the distribution on $S$ taking the value $x \in S$ with probability $\binom{3k}{k}^{-1}$, and $U(\slice{S}{3})$ is the distribution on $\slice{S}{3}$ taking the value $(x_1,x_2,x_3) \in \slice{S}{3}$ with probability $\binom{3k}{k,k,k}^{-1}$.

\begin{defn}[Signed influence of $i$ in $g$]

For a real-valued function $g$ with domain $S$, the signed influence of $i$, written $I^{\pm}_i(g)$, is the expected value of $g(x)$ given that $i \in x$; that is,

\begin{equation}
I^{\pm}_i(g) = \expect[x \sim U(S)]{g(X) | i \in x}
\end{equation}

\end{defn}

Note that for a transitive-symmetric function $g$, the signed influence of $i$ is constant across all $i$. We call this weakening of transitive symmetry ``egalitarianism''.

\begin{defn}[Egalitarian]
We call a real-valued function $g$ on $S$ egalitarian if the signed influence of $i$ in $g$ is constant across all $i$. If $g$ takes values on an arbitrary set, then $g$ is egalitarian if and only if $\mathbbm{1}_{g = X}$ is egalitarian for every $X$ in the image of $g$.
\end{defn}

For real-valued functions on $S$ we define the natural inner product:

\begin{equation}
\innerproduct{f}{g} = \expect[x \sim U(S)]{f(x)g(x)}
\end{equation}

Now for each $x \in S$ we define the ``replacement distribution'' $R(x) \in S$ to be uniform over all $y \in S$ with $x \cap y = \emptyset$; so

\begin{equation}
\prob{R(x) = y} =
\begin{cases}
\binom{2k}{k}^{-1} & x \cap y = \emptyset \\
0 & x \cap y \neq \emptyset
\end{cases}
\end{equation}

We further consider the replacement operator $T_R$ and replacement stability function $\Stab_R$ defined by

\begin{equation}
\begin{aligned}
T_R g(x) &= \expect[y \sim R(x)]{g(y)} \\
\Stab_R(g) &= \innerproduct{g}{T_R g}
\end{aligned}
\end{equation}

First, we make the following simple observations about these distributions.

\begin{lem}
For $(x_1,x_2,x_3) \sim U(\slice{S}{3})$, we have:
\begin{enumerate}
\item $(x_a,x_b,x_c) \sim U(\slice{S}{3})$ whenever $(a,b,c)$ is a permutation of $(1,2,3)$
\item $x_i \sim U(S)$
\item $x_j \sim R(x_i)$ for all $j \neq i$
\setcounter{distresultscount}{\value{enumi}}
\end{enumerate}
Moreover, if $x \sim U(S)$ and $y \sim R(x)$ then, writing $z = V \setminus (x \cup y)$, we have 
\begin{enumerate}
\setcounter{enumi}{\value{distresultscount}}
\item $(x,y,z) \sim U(\slice{S}{3})$
\end{enumerate}
\end{lem}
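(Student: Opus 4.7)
All four claims concern uniform measures on simple combinatorial objects, and the proof is essentially a bookkeeping exercise built on the standard fact that marginals and conditionals of a uniform distribution on a finite set are uniform on their fibres. I do not expect any genuine obstacle; the only choice to make is the order of the claims so that each uses the previous ones.

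I would handle (1) first by observing that for any $\sigma \in \Sym(3)$, the map $(x_1,x_2,x_3) \mapsto (x_{\sigma(1)}, x_{\sigma(2)}, x_{\sigma(3)})$ is a bijection $\slice{S}{3} \to \slice{S}{3}$ and therefore preserves the uniform measure. This instantly reduces the proofs of (2) and (3) to the cases $i=1$, $j=2$.

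For (2) I would count the ordered 3-partitions in $\slice{S}{3}$ whose first block is a fixed $x \in S$: there are $\binom{2k}{k}$ of them, one for each way of splitting $V \setminus x$ into two ordered $k$-sets. Since $|\slice{S}{3}| = \binom{3k}{k,k,k} = \binom{3k}{k}\binom{2k}{k}$, the marginal probability $\prob{x_1 = x} = \binom{3k}{k}^{-1}$ is independent of $x$, so $x_1 \sim U(S)$. Claim (3) then follows by conditioning: given $x_1 = x$, the $\binom{2k}{k}$ extensions to $\slice{S}{3}$ are equally likely, so $x_2$ is uniform over the $k$-subsets of $V \setminus x$, which is exactly $R(x)$ by definition.

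Finally I would obtain (4) by running the chain rule in the reverse direction. If $x \sim U(S)$ and $y \sim R(x)$ then for every $(x,y,z) \in \slice{S}{3}$,
\begin{equation}
\prob{(x,y,z)} = \prob{x_1 = x}\prob{x_2 = y \mid x_1 = x} = \binom{3k}{k}^{-1}\binom{2k}{k}^{-1} = \binom{3k}{k,k,k}^{-1},
\end{equation}
which matches the uniform measure on $\slice{S}{3}$, as required.
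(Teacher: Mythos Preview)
Your proposal is correct and follows essentially the same approach as the paper: symmetry for (1), fibre-counting for (2), and the chain rule $\binom{3k}{k}^{-1}\binom{2k}{k}^{-1}=\binom{3k}{k,k,k}^{-1}$ for (4). If anything, your version is slightly more complete, since you explicitly derive (3) by conditioning and use (1) to reduce to the case $i=1,\ j=2$, whereas the paper's proof handles (1), (2) and (4) and leaves (3) implicit.
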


\begin{proof}
We begin with $(x_1,x_2,x_3) \sim U(\slice{S}{3})$. Then:
\begin{enumerate}
\item $V_1$, $V_2$ and $V_3$ are symmetrical in the definition of $\slice{S}{3}$. Since $(x_1,x_2,x_3) \in \slice{S}{3}$ we have $(x_a,x_b,x_c) \in \slice{S}{3}$, and $\prob{U(\slice{S}{3}) = (x_1,x_2,x_3)} = \binom{3k}{k,k,k}^{-1} = \prob{U(\slice{S}{3}) = (x_a,x_b,x_c)}$.
\item Certainly $x_1 \in S$; then we have $\prob{U(S) = x_1} = \binom{3k}{k}^{-1}$. But

\begin{equation}
\begin{aligned}
\prob[\vec{y} \sim U(\slice{S}{3})]{y_1 = x_1} &= \frac{|\set{(y_1,y_2,y_3) \in \slice{S}{3}: y_1 = x_1}|}{|\slice{S}{3}|} \\
&= \frac{\binom{2k}{k}}{\binom{3k}{k,k,k}} \\
&= \binom{3k}{k}^{-1}
\end{aligned}
\end{equation}

So the distributions are identical.

\item For any $x, y \in S$ with $x \cap y = \emptyset$ we have

\begin{equation}
\begin{aligned}
\prob[X \sim U(S), Y \sim R(X)]{X=x, Y=y} &= \prob[X \sim U(S)]{X=x} \cdot \prob[Y \sim R(x)]{Y=y} \\
&= \binom{3k}{k}^{-1} \cdot \binom{2k}{k}^{-1} \\
&= \binom{3k}{k,k,k}^{-1}
\end{aligned}
\end{equation}

This is precisely the probability that $(x,y,z)$ is drawn from the distribution $U(\slice{S}{3})$, as required.
\end{enumerate}
\end{proof}

We are now ready to prove our key theorem in determining all possible SWFs $F$. We limit $g$ to the slice $S$ and we consider functions returning $0$ or $1$, as opposed to the usual $\set{W,T,L}$.

\begin{thm}\label{sliceWL}

For any egalitarian boolean function on the slice, $g: S \rightarrow \set{0,1}$, let $p = \prob[x \sim U(S)]{g(x)=1}$. Then we have

\begin{equation}
\prob[(x_1, x_2, x_3) \sim U(\slice{S}{3})]{g(x_1)=g(x_2)=g(x_3)} \geq \frac{8-27p+27p^2}{8}
\end{equation}

\end{thm}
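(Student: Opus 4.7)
My plan is to recast the bound as a spectral inequality on the replacement operator $T_R$. Using the observations about $U(\slice{S}{3})$ already proved, I would first expand
\[
\prob{g(x_1)=g(x_2)=g(x_3)} = \expect{g(x_1)g(x_2)g(x_3)} + \expect{\textstyle\prod_i (1-g(x_i))}.
\]
The symmetry lemma collapses each $\expect{g(x_i)g(x_j)}$ to $\Stab_R(g)$, the single-factor terms sum to $-3p$, and the cubic pieces cancel, yielding the identity $\prob{g(x_1)=g(x_2)=g(x_3)} = 1 - 3p(1-p) + 3\Stab_R(h)$ where $h = g - p$. The target inequality is then equivalent to the cleaner statement $\Stab_R(h) \geq -p(1-p)/8$.

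Next I would diagonalize $T_R$ via Filmus's harmonic decomposition $L^2(S) = \bigoplus_{d=0}^{k} H_d$. The operator $T_R$ is self-adjoint (the joint distribution of $(x, R(x))$ is symmetric in its two coordinates) and $\Sym(V)$-equivariant, so it acts as a scalar $\lambda_d$ on each $H_d$. I would compute $\lambda_d$ by applying $T_R$ to the monomial $\mathbbm{1}_{T \subset x}$ with $|T| = d$: a direct count gives $T_R(\mathbbm{1}_{T \subset x})(x) = c_d \cdot \mathbbm{1}_{T \cap x = \emptyset}$ with $c_d = \binom{2k-d}{k-d}/\binom{2k}{k}$, and expanding $\mathbbm{1}_{T \cap x = \emptyset} = \prod_{l \in T}(1 - \mathbbm{1}_{l \in x})$ by inclusion--exclusion isolates the degree-$d$ harmonic component, giving
\[
\lambda_d = (-1)^d \prod_{i=0}^{d-1} \frac{k - i}{2k - i}.
\]
Then $\lambda_1 = -1/2$ (the dangerous eigenvalue) and $\lambda_2 = (k-1)/(2(2k-1)) > 0$, and for $d \geq 3$ each factor is at most $1/2$ and strictly less for $i \geq 1$, so $|\lambda_d| < 1/8$. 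In particular $\lambda_d \geq -1/8$ for every $d \geq 2$.

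The egalitarian hypothesis kills the $d = 1$ contribution exactly. $H_1$ is spanned by the centered indicators $\mathbbm{1}_{i \in x} - 1/3$, and $\innerproduct{h}{\mathbbm{1}_{i \in x} - 1/3} = (1/3)(I^{\pm}_i(g) - p)$ vanishes for every $i$ precisely when $g$ is egalitarian. So decomposing $h = \sum_{d \geq 2} h_d$ with $\sum_{d \geq 2} \lVert h_d \rVert^2 = \lVert h \rVert^2 = p(1-p)$, I conclude
\[
\Stab_R(h) = \sum_{d \geq 2} \lambda_d \lVert h_d \rVert^2 \geq -\frac{1}{8}\sum_{d \geq 2}\lVert h_d \rVert^2 = -\frac{p(1-p)}{8},
\]
as needed. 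The main obstacle is the spectral step: importing Filmus's harmonic decomposition of $L^2(S)$ and establishing the eigenvalue formula for $T_R$. Once these are in place, the bound $\lambda_d \geq -1/8$ for $d \geq 2$ and the elimination of the $d = 1$ contribution are essentially one-line observations.
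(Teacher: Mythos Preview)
Your proposal is correct and follows essentially the same route as the paper: derive the identity $\prob{A}=1-3p+3\Stab_R(g)$, diagonalize $T_R$ via the harmonic decomposition of the slice to get eigenvalues $\alpha_d=(-1)^d\prod_{i=0}^{d-1}\frac{k-i}{2k-i}$, use egalitarianism to kill the degree-$1$ weight, and bound the remaining eigenvalues below by $-1/8$. The only differences are cosmetic---you center $g$ to $h=g-p$ first, compute the eigenvalues by pushing $T_R$ through monomials $\mathbbm{1}_{T\subset x}$ and reading off the top harmonic component, and span $H_1$ by $\mathbbm{1}_{i\in x}-1/3$ instead of $x_i-x_j$---but the argument is the paper's.
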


We call this event $A$. Note that our lower bound for $\prob{A}$ is minimised when $p=1/2$, so across \emph{all} $g$ we have $\prob{A} \geq 5/32$.

\begin{proof}
Following Frankl and Graham \autocite{FG} we consider the basis of harmonic polynomials over the function space $S^*$ given by

\begin{equation}
\chi_{\phi(B), B} = \prod_{i=1}^d (x_{\phi(B)_i} - x_{B_i})
\end{equation}

where $x_i$ here means $\mathbbm{1}_{i \in x}$. Here $B$ is an increasing sequence in $[n]$ of length $d\leq k$ and $\phi(B)$ is an increasing $d$-sequence, disjoint from $B$ as a set and with $\phi(B)_i < B_i$ for all $i$ (normally there are multiple choices for $\phi(B)$; we add $\chi_{\phi(B),B}$ to our basis for only one choice of $\phi(B)$ to preserve linear independence).

Now the spaces $E_d = \langle \chi_{\phi(B),B}: |B|=d \rangle$ are the mutual eigenspaces of the $(3k,k)$-Johnson association scheme (following \autocite{filmus}). Therefore they are orthogonal spaces, and they are eigenspaces of any operator $T$ in the Bose-Mesner algebra; later we will show that $T_R$ is such an operator.

Thus we consider $g$ as a multilinear harmonic polynomial and consider its decomposition

\begin{equation}
\begin{aligned}
g &= \sum_{d=0}^k g^{=d} \\
g^{=d} &= \sum_{|B| = d} \alpha_{\phi(B), B} \chi_{\phi(B), B}
\end{aligned}
\end{equation}

We define $W^{(d)}[g] = \expect[x \sim U(S)]{g^{=d}(x)^2}$.

We will determine the probability $\prob{A}$ in terms of $p$ and $\Stab_R(g)$, and then bound this expression by understanding the eigenvalues and eigenspaces of the $T_R$ operator. The first observation is equivalent to one found in \autocite{kalai}.

\begin{lem}\label{AprobWL}
$\prob{A} = 1-3p+3\Stab_R(g)$
\end{lem}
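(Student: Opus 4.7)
The plan is to write the indicator of the event $A$ as a multilinear polynomial in the three Boolean random variables $X_i = g(x_i)$, take expectations using the symmetry properties of $U(\slice{S}{3})$ guaranteed by the preceding lemma, and recognise $\mathbb{E}[X_1 X_2]$ as $\Stab_R(g)$.

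First I would observe that, since each $X_i \in \{0,1\}$, we have the identity
\begin{equation}
\mathbbm{1}_A = X_1 X_2 X_3 + (1-X_1)(1-X_2)(1-X_3).
\end{equation}
Expanding the right-hand side, the cubic terms cancel and we are left with
\begin{equation}
\mathbbm{1}_A = 1 - (X_1 + X_2 + X_3) + (X_1 X_2 + X_2 X_3 + X_3 X_1).
\end{equation}
A quick sanity check on the four possible joint values of the $X_i$ (three equal to $0$, three equal to $1$, or a mix) confirms this.

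Next I would take expectations under $(x_1,x_2,x_3) \sim U(\slice{S}{3})$. By item (1) of the preceding lemma, the three marginals $X_i$ are identically distributed, and by item (2) each marginal is $U(S)$, so $\mathbb{E}[X_i] = p$ for each $i$. Similarly, by item (1) the three pairwise distributions $(X_i,X_j)$ are identical, so $\mathbb{E}[X_1 X_2] = \mathbb{E}[X_2 X_3] = \mathbb{E}[X_1 X_3]$. Hence
\begin{equation}
\prob{A} = 1 - 3p + 3\,\expect{g(x_1)g(x_2)}.
\end{equation}

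Finally I would evaluate $\expect{g(x_1)g(x_2)}$ using item (3), which says that conditional on $x_1$ the second coordinate satisfies $x_2 \sim R(x_1)$. Conditioning on $x_1$ therefore gives
\begin{equation}
\expect{g(x_1)g(x_2)} = \expect[x_1 \sim U(S)]{g(x_1)\, \expect[x_2 \sim R(x_1)]{g(x_2)}} = \expect[x_1 \sim U(S)]{g(x_1)\,T_R g(x_1)} = \innerproduct{g}{T_R g} = \Stab_R(g),
\end{equation}
which combined with the previous display yields the claimed identity $\prob{A} = 1-3p+3\Stab_R(g)$. There is no real obstacle here: egalitarianism is not needed for this lemma (it will be needed only later to bound $\Stab_R(g)$ via the harmonic decomposition), and the whole argument rests on the distributional symmetries already packaged into the lemma preceding this one.
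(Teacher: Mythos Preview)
Your proof is correct and follows essentially the same approach as the paper: both arguments exploit the permutation symmetry of $U(\slice{S}{3})$ to reduce everything to the two moments $\mathbb{E}[g(x_1)]=p$ and $\mathbb{E}[g(x_1)g(x_2)]=\Stab_R(g)$. The only cosmetic difference is that the paper enumerates the atom probabilities $p_t=\prob{\sum_i g(x_i)=t}$ and takes the $(1,-3,3,-1)$ linear combination of the identities for $1$, $p$, $\Stab_R(g)$, $\prob{A}$, whereas you expand the indicator $\mathbbm{1}_A = 1-\sum_i X_i + \sum_{i<j} X_i X_j$ directly; these are the same computation.
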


\begin{proof}
We take $\vec{x} = (x_1,x_2,x_3) \sim U(\slice{S}{3})$. We have seen that the marginal distribution of each $x_i$ is $U(S)$ and fixing any $x_i$ we get $x_j \sim R(x_i)$ for $j \neq i$.

Now we write $g(\vec{x}) = (g(x_1), g(x_2), g(x_3)) \in \set{0,1}^3$. We let

\begin{equation}
p_{\vec{b}} = \prob{g(\vec{x}) = \vec{b}}
\end{equation}

for all $\vec{b} \in \set{0,1}^3$. Since $p_{\vec{b}}$ is fixed by permutations of the $x_i$, we know that there exist $(p_t)_{t=0}^3$ such that

\begin{equation}
p_{\vec{b} \cdot \mathbbm{1}} = p_{\vec{b}}
\end{equation}

Now we have the following identities:

\begin{equation}
\begin{aligned}
1 &= p_3+3p_2+3p_1+p_0 \\
p &= p_3 + 2p_2 + p_1 \\
Stab_R(g) &= p_3 + p_2 \\
\prob{A} &= p_3 + p_0
\end{aligned}
\end{equation}

The first identity is given by exhausting all possibilities; the second is given by exhausting the possibilities given that $g(x_1)=1$; the third is given by exhausting the possibilities given that $g(x_1)=g(x_2)=1$. The fourth identity follows immediately from the definition of $A$. The $(1,-3,3, -1)$-linear combination of these equations gives our lemma.
\end{proof}

Next we need an expression for $T_R g$ in terms of the $g^{=d}$, analogously to the classical expressions for noise stability (see for example \autocite{odonnell}).

\begin{lem}\label{operatorWL}
$T_R g = \sum_{d=0}^k \alpha_d g^{=d}$ where $\alpha_d = (-1)^d \frac{(2k-d)!k!}{(2k)!(k-d)!}$
\end{lem}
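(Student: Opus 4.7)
The plan is to verify that $T_R$ lies in the Bose--Mesner algebra of the $(3k,k)$-Johnson scheme, so that each $E_d$ is automatically an eigenspace, and then to pin down the eigenvalue $\alpha_d$ by a direct evaluation on a single basis element. Unwrapping the definition,
\begin{equation}
T_R g(x) = \binom{2k}{k}^{-1} \sum_{y \cap x = \emptyset} g(y),
\end{equation}
so $T_R$ is $\binom{2k}{k}^{-1}$ times the adjacency operator for the relation $|x \cap y| = 0$ in the Johnson scheme. By the remark just preceding the statement, any such operator acts as a scalar $\alpha_d$ on each $E_d$, and the theorem reduces to identifying these scalars.

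For the computation I would pick disjoint increasing sequences $B = (b_1 < \cdots < b_d)$ and $\phi(B) = (a_1 < \cdots < a_d)$ in $V$ (allowed since $d \leq k$), set $\chi = \chi_{\phi(B),B} = \prod_{i=1}^d (x_{a_i} - x_{b_i}) \in E_d$, and evaluate at an $x \in S$ containing all $a_i$ and none of the $b_i$; such an $x$ exists because $\{a_i\} \cup \{b_i\}$ has only $2d \leq 2k$ elements out of $3k$. For this $x$ we have $\chi(x) = 1$, and every $y \in R(x)$ satisfies $y \cap x = \emptyset$, hence $y_{a_i} = 0$ for each $i$, so
\begin{equation}
\chi(y) = \prod_{i=1}^d (0 - y_{b_i}) = (-1)^d \prod_{i=1}^d y_{b_i}.
\end{equation}
As the $b_i$ all lie in $V \setminus x$ and $y$ is uniform over $k$-subsets of the $2k$-element set $V \setminus x$,
\begin{equation}
\expect[y \sim R(x)]{\prod_{i=1}^d y_{b_i}} = \frac{\binom{2k-d}{k-d}}{\binom{2k}{k}} = \frac{(2k-d)!\,k!}{(2k)!\,(k-d)!},
\end{equation}
giving $\alpha_d = T_R \chi(x) = (-1)^d (2k-d)!\,k!/((2k)!\,(k-d)!)$, as required.

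The main obstacle is conceptual rather than computational: one has to recognise $T_R$ as a rescaling of a Johnson-scheme adjacency operator in order to invoke the eigenspace decomposition from \autocite{filmus}. Once that identification is in place the remaining work is a short combinatorial calculation on a well-chosen example, and one could even bypass the Bose--Mesner framework entirely by verifying directly that $T_R$ commutes with the natural $\Sym(V)$-action on $S$ and appealing to Schur's lemma on each isotypic component.
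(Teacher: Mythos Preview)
Your proposal is correct and follows essentially the same route as the paper: identify $T_R$ as a scalar multiple of the adjacency operator for the disjointness relation in the $(3k,k)$-Johnson scheme, invoke the eigenspace decomposition $E_d$, and then read off $\alpha_d$ by evaluating $T_R\chi$ at a single point $x$ with $\chi(x)=1$. The paper's version merely fixes the concrete choice $\chi_d(x)=\prod_{i=1}^d(x_{2i-1}-x_{2i})$ and $x=\{1,3,\dots,2d-1\}\cup\{2d+1,\dots,d+k\}$, but the counting argument and the resulting formula are identical to yours.
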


\begin{proof}
Consider the graph $H$ whose vertices are $S$ and with $xy$ an edge if and only if $x \cap y = \emptyset$. The adjacency matrix of this graph is in the Bose-Mesner algebra of the $(3k,k)$-Johnson association scheme; and since $H$ is $\binom{2k}{k}$-regular, the adjacency matrix is a scalar multiple of the transition matrix, so it is also in the Bose-Mesner algebra. But a step from $x$ to $y$ in the random walk on $H$ has $y \sim R(x)$, so the transition matrix is a matrix representation of the operator $T_R$. Therefore $T_R$ has the eigenspaces $E_d$ given above; we let the eigenvalue corresponding to $E_d$ under the operator $T_R$ be $\alpha_d$.

Now to calculate $\alpha_d$, we take $\chi_d$ defined by $\chi_d(x) = \prod_{i=1}^d (x_{2i-1}-x_{2i})$, where $x_j = \mathbbm{1}_{j \in x}$. We set

\begin{equation}
x = \set{1, 3, \dots, 2d-1} \cup \set{2d+1, \dots, d+k}
\end{equation}

Note that since $d \leq k$ we have $k+d \leq 2k < 3k$ and $2d-1 \leq 2k-1 < 3k$ so $x$ is in $S$.

Now $\chi_d(x) = 1$, so $\alpha_d = T_R \chi_d (x) = \expect[y \sim R(x)]{\chi_d(y)}$. For $1\leq i \leq d$ we have $y_{2i-1} =0$ since $y \cap x = \emptyset$. Hence either $y_{2i}=1$ or $y_{2i-1}-y_{2i} = 0$ and $\chi_d(y)=0$. If $y_{2i}=1$ for all $1 \leq i \leq d$ then $\chi_d(y) = (-1)^d$. There are $\binom{2k-d}{k-d}$ such choices of $y$ out of $\binom{2k}{k}$ values from which $y$ is selected uniformly, so

\begin{equation}
\begin{aligned}
\alpha_d &= (-1)^d \binom{2k-d}{k-d} \binom{2k}{k}^{-1} \\
&= (-1)^d \frac{(2k-d)!k!}{(2k)!(k-d)!}
\end{aligned}
\end{equation}

as required.
\end{proof}

Taking the inner product of $g$ and $T_R g$ immediately gives an expression for $\Stab_R(g)$ in terms of the fourier coefficients of $g$ (recalling that $\innerproduct{g^{=d}}{g^{=d}} = W^{(d)}[g]$):

\begin{cor}\label{stabWL}
$\Stab_R(g) = \sum_{d=0}^k \alpha_d W^{(d)}[g]$
\end{cor}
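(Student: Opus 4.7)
The plan is to observe that this is an immediate consequence of Lemma \ref{operatorWL} combined with the orthogonality of the eigenspaces $E_d$, which was already established when the paper noted that these spaces are the mutual eigenspaces of the $(3k,k)$-Johnson association scheme. So the proof will essentially be a single bilinearity computation, with no real obstacle.

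First I would expand $\Stab_R(g)$ using the definition and the harmonic decomposition of $g$, writing
\begin{equation}
\Stab_R(g) = \innerproduct{g}{T_R g} = \innerproduct{\sum_{d=0}^k g^{=d}}{T_R \sum_{d'=0}^k g^{=d'}}.
\end{equation}
Next I would invoke Lemma \ref{operatorWL} inside the right-hand argument to replace $T_R g^{=d'}$ by $\alpha_{d'} g^{=d'}$, then pull the scalars out and expand the inner product bilinearly to get a double sum $\sum_{d,d'} \alpha_{d'} \innerproduct{g^{=d}}{g^{=d'}}$.

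Then I would apply orthogonality of the $E_d$: since the harmonic components $g^{=d}$ and $g^{=d'}$ lie in distinct eigenspaces of the Bose--Mesner algebra for $d \neq d'$, the cross terms vanish, leaving only the diagonal contributions $\sum_d \alpha_d \innerproduct{g^{=d}}{g^{=d}}$. Finally I would recognise $\innerproduct{g^{=d}}{g^{=d}} = W^{(d)}[g]$ by the definition given just before the corollary, yielding $\Stab_R(g) = \sum_{d=0}^k \alpha_d W^{(d)}[g]$ as required. The only part requiring any care is being explicit that we are relying on the orthogonality of the $E_d$, which was already justified, so nothing in this argument is genuinely hard — this is a textbook Fourier-style identity being packaged as a corollary for later use in bounding $\prob{A}$.
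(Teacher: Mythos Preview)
Your proposal is correct and matches the paper's approach exactly: the paper simply states that the corollary follows by taking the inner product of $g$ with $T_R g$ and recalling that $\innerproduct{g^{=d}}{g^{=d}} = W^{(d)}[g]$. You have written out in full the bilinearity and orthogonality steps that the paper leaves implicit, but the underlying argument is identical.
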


We note the following recursive identities for the $\alpha_d$:

\begin{equation}
\begin{aligned}
\alpha_0 &= 1 \\
\alpha_d &= -\alpha_{d-1} \frac{k-d+1}{2k-d+1} \\
\end{aligned}
\end{equation}

These give the bounds:

\begin{equation}
\begin{aligned}
0 \leq \alpha_{2i} \leq 2^{-2i} \\
-2^{-2i-1} \leq \alpha_{2i+1} \leq 0
\end{aligned}
\end{equation}

Thus the signs of the coefficients alternate and their magnitudes decay exponentially. Moreover the $W^{(d)}[g]$ are non-negative, so the key to bounding $\Stab_R(g)$ from below will be an upper bound on $W^{(1)}[g]$, the only weight with a large negative coefficient. But this is exactly what we get from the egalitarianism of $g$.

\begin{lem}\label{deg1WL}
$W^{(1)}[g] = 0$
\end{lem}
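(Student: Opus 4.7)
The plan is to unpack both the definition of $W^{(1)}[g]$ and the definition of egalitarianism in terms of the inner product $\innerproduct{\cdot}{\cdot}$, and observe that they are orthogonal conditions.

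First I would translate egalitarianism into a statement about inner products. Since $\prob[x\sim U(S)]{i \in x} = k/3k = 1/3$ for every $i$, the signed influence satisfies
\begin{equation}
I^{\pm}_i(g) = \frac{\expect[x \sim U(S)]{g(x)\mathbbm{1}_{i \in x}}}{\prob[x \sim U(S)]{i \in x}} = 3\innerproduct{g}{x_i},
\end{equation}
where, as in the proof of Lemma \ref{operatorWL}, we write $x_i$ for the function $\mathbbm{1}_{i \in x}$. Egalitarianism of $g$ therefore says exactly that $\innerproduct{g}{x_i}$ is independent of $i$, i.e.\ $\innerproduct{g}{x_i - x_j} = 0$ for every pair $i,j \in V$.

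Next I would identify $E_1$ with the span of these differences. By construction, $E_1$ is spanned by the basis elements $\chi_{\phi(B),B}$ with $|B|=1$; each such basis element has the form $x_{\phi(B)_1} - x_{B_1}$, a difference of two coordinate indicators. Hence every element of $E_1$ is a linear combination of terms of the form $x_i - x_j$, all of which we have just shown are orthogonal to $g$. Therefore $g \perp E_1$.

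Finally, since $g^{=1}$ is the orthogonal projection of $g$ onto $E_1$, the relation $g \perp E_1$ forces $g^{=1} = 0$, and so $W^{(1)}[g] = \innerproduct{g^{=1}}{g^{=1}} = 0$. There is no real obstacle in this argument; the only point to check carefully is the correspondence between egalitarianism (a statement about conditional expectations on the slice) and orthogonality to differences of coordinate indicators, which works cleanly because $\prob{i \in x}$ is the same for every $i$ under $U(S)$.
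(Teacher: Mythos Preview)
Your proof is correct and essentially identical to the paper's own argument: both convert egalitarianism into the statement that $\innerproduct{g}{x_i}$ is independent of $i$ (using that $\prob{i\in x}=1/3$ for all $i$), observe that $E_1$ is spanned by differences $x_i-x_j$, and conclude that $g\perp E_1$ so $g^{=1}=0$.
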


\begin{proof}
We know that $g^{=1}$ is in the span of the $\chi_{\phi(\set{i}),\set{i}}$, so it suffices to demonstrate that for any such $\chi$ we have $\innerproduct{\chi}{g}=0$. Let $\chi$ be given by $\chi(x) = x_i - x_j$. Then $\innerproduct{x_i-x_j}{g} = \innerproduct{x_i}{g}-\innerproduct{x_j}{g}$. But 

\begin{equation}
\begin{aligned}
\innerproduct{x_i}{g} &= \prob[x \sim U(S)]{i \in x, g(x)=1} \\
&= \prob{i \in x}\cdot \prob{g(x)=1 | i \in x}
\end{aligned}
\end{equation}

The first of these factors is simply $k/3k = 1/3$ and the second is the signed influence of $i$, $I^{\pm}_i(g)$. But since $g$ is egalitarian, this means that $\innerproduct{x_i}{g} = \innerproduct{x_j}{g}$ for all $i, j$, so we get that $\innerproduct{\chi}{g}=0$ as required.
\end{proof}

We are now ready to give bounds for $\Stab_R(g)$.

\begin{lem}\label{stabWLbound}
$p^2 - \frac{p-p^2}{8} \leq \Stab_R(g) \leq p^2 + \frac{p-p^2}{4}$
\end{lem}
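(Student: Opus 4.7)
The plan is to read off $\Stab_R(g)$ from its Fourier decomposition in Corollary \ref{stabWL}, identify the terms at $d=0$ and $d=1$ exactly, and sandwich the rest using the exponential decay of $\alpha_d$.

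First I would note that $g^{=0}$ is the constant $\mathbb{E}[g] = p$, so $W^{(0)}[g] = p^2$, and $\alpha_0 W^{(0)}[g] = p^2$. By Lemma \ref{deg1WL}, $W^{(1)}[g] = 0$, so the $d=1$ term vanishes regardless of $\alpha_1$. Next, because $g$ is boolean we have $g^2 = g$, so Parseval gives
\begin{equation}
\sum_{d=0}^{k} W^{(d)}[g] = \innerproduct{g}{g} = \mathbb{E}[g] = p,
\end{equation}
and consequently $\sum_{d \geq 2} W^{(d)}[g] = p - p^2$.

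Now I would use the bounds displayed just before the lemma: for $d \geq 2$ the extreme values are $\alpha_d \leq 2^{-d} \leq 1/4$ (attained on the even side at $d=2$) and $\alpha_d \geq -2^{-d} \geq -1/8$ (attained on the odd side at $d=3$). Since each $W^{(d)}[g] \geq 0$, combining these with the identity for the tail mass yields
\begin{equation}
-\tfrac{1}{8}(p - p^2) \;\leq\; \sum_{d \geq 2} \alpha_d W^{(d)}[g] \;\leq\; \tfrac{1}{4}(p - p^2),
\end{equation}
and adding the $p^2$ from the $d=0$ term gives exactly the claimed inequalities.

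There is no real obstacle here; the one point worth checking carefully is that the recursive identity $\alpha_d = -\alpha_{d-1}\cdot \tfrac{k-d+1}{2k-d+1}$, together with $\alpha_0 = 1$, really does force $|\alpha_d| \leq 2^{-d}$ with alternating signs, so that $1/4$ and $-1/8$ are the true extremes over $d \geq 2$. Since $\tfrac{k-d+1}{2k-d+1} \leq 1/2$ for all $d \geq 1$ (equivalently $2(k-d+1) \leq 2k-d+1$, i.e.\ $d \geq 1$), this is immediate by induction, and the proof is complete.
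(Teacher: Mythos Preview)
Your argument is essentially identical to the paper's: compute $W^{(0)}=p^2$, kill $W^{(1)}$ via Lemma~\ref{deg1WL}, use Parseval to get the tail mass $p-p^2$, and then bound the tail coefficients by $1/4$ above and $-1/8$ below. One phrasing slip: the chain ``$\alpha_d \geq -2^{-d} \geq -1/8$'' fails at $d=2$ (where $-2^{-2}=-1/4$); the correct split, which the paper makes explicit and which your final paragraph in fact supplies, is that $\alpha_2 \geq 0$ by the sign alternation while $|\alpha_d|\leq 2^{-d}\leq 1/8$ for $d\geq 3$.
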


\begin{proof}

We know that $g^{=0} = p$ is a constant, and all $g^{=d}$ are pairwise orthogonal, so we have the usual identities $W^{(0)}[g] = p^2$ and $\sum_{d=0}^k W^{(d)}[g] = p$. Lemma \ref{deg1WL} eliminates $W^{(1)}[g]$, so we have $\sum_{d=2}^k W^{(d)}[g] = p - p^2$. Then a lower bound of $\Stab_R(g)$ is given by

\begin{equation}
\begin{aligned}
\Stab_R(g) &= \sum_{d=0}^k \alpha_d W^{(d)}[g] \\
&= p^2 + \sum_{d=2}^k \alpha_d W^{(d)}[g] \\
& \geq p^2 - \frac{1}{8} \sum_{d=2}^k W^{(d)}[g] \\
& = p^2 - \frac{p-p^2}{8}
\end{aligned}
\end{equation}

The inequality is because $\alpha_2 > 0 > -1/8$ and for $d>2$ we have $|\alpha_d| \leq 1/8$. The upper bound is similarly given by

\begin{equation}
\begin{aligned}
\Stab_R(g) &\leq p^2 + \frac{1}{4} \sum_{d=2}^k W^{(d)}[g] \\
& = p^2 + \frac{p-p^2}{4}
\end{aligned}
\end{equation}

because $|\alpha_d| \leq 1/4$ for $d \geq 2$.
\end{proof}

We set this upper bound aside for now, and combine the lower bound with previous observations to give a lower bound for $\prob{A}$. We have

\begin{equation}
\begin{aligned}
\prob{A} &= 1-3p+3\Stab_R(g) \\
&\geq 1 - 3p + 3p^2 - \frac{3(p - p^2)}{8} \\
& = \frac{8-27p+27p^2}{8}
\end{aligned}
\end{equation}

which is the required lower bound on $\prob{A}$.
\end{proof}

Now we turn to functions $g: \powerset{V} \rightarrow \set{W,T,L}$. We will use Theorem \ref{sliceWL} to show that either the whole of $S$ maps to $T$, or there are inconsistent $3$-tuples $(x_1, x_2, x_3) \in \slice{S}{3}$. We will also get a quantitative measure of how quickly the number of inconsistent $3$-tuples must grow as the number of non-tie values in $S$ grows.

Recall that the consistent multisets of relative results in SWFs are $\set{W,X,L}$ for any $X \in \set{W,T,L}$, and $\set{T,T,T}$. We quantify the probabilities of various results with variables $p_{WWW}$, $p_{LLL}$, $p_{WWT}$, $p_{LLT}$, $p_{WTT}$, $p_{LTT}$ and $p_{TTT}$, where

\begin{equation}
p_{XYZ} = \prob[\vec{x} \sim U(\slice{S}{3})]{\set{g(x_i)}_i = \set{X,Y,Z}}
\end{equation}

We define the following weighted sums of the different types of inconsistency:

\begin{equation}
\begin{aligned}
E &= p_{WWW} + p_{LLL} + p_{WWT}/2 + p_{LLT}/2 + p_{WTT}/6 + p_{LTT}/6 \\
P &= p_{WWW} + p_{LLL} + p_{WWT} + p_{LLT} + p_{WTT} + p_{LTT}
\end{aligned}
\end{equation}

$P$ is the total probability of an inconsistency, and $E \leq P$, so a lower bound on $E$ also gives a lower bound on the probability of an inconsistency.

Finally we let $\prob[x \sim U(S)]{g(x) = X} = p_X$ for $X \in \set{W,T,L}$ and we let $b = p_W-p_L$ be the ``bias'' of $g$; if we were to set $W=1$, $T=0$ and $L=-1$ then we would have $b=\expect{g}$.

The statement of the theorem is as follows.

\begin{thm}\label{sliceWTL}
For any egalitarian function $g: S \rightarrow \set{W, T, L}$,

\begin{equation}
E \geq \frac{1}{32}\left((1-p_T)(5-3p_T) + 27b^2\right)
\end{equation}
\end{thm}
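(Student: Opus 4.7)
The plan is to reduce Theorem \ref{sliceWTL} to the boolean case Theorem \ref{sliceWL}, applied separately to the indicators $g_W = \mathbbm{1}_{g = W}$ and $g_L = \mathbbm{1}_{g = L}$, with the resulting slack absorbed by the \emph{upper} bound of Lemma \ref{stabWLbound} applied to $g_T$. Since $g$ is egalitarian, each of $g_W$, $g_L$, $g_T$ is an egalitarian boolean function, so both tools are available.

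Applying Theorem \ref{sliceWL} to $g_W$, the event ``$g_W$ is constant on $(x_1,x_2,x_3)\sim U(\slice{S}{3})$'' splits into ``all three are $W$'' (probability $p_{WWW}$) and ``no $x_i$ is $W$'' (probability $p_{TTT} + p_{LLL} + p_{LLT} + p_{LTT}$), yielding
\begin{equation*}
p_{WWW} + p_{TTT} + p_{LLL} + p_{LLT} + p_{LTT} \geq \frac{8 - 27 p_W(1-p_W)}{8},
\end{equation*}
with the symmetric inequality for $g_L$. Writing $s = 1 - p_T = p_W + p_L$ and $b = p_W - p_L$ so that $p_W^2 + p_L^2 = (s^2 + b^2)/2$, summing the two inequalities gives
\begin{equation*}
2(p_{WWW} + p_{LLL}) + 2 p_{TTT} + (p_{WWT} + p_{LLT}) + (p_{WTT} + p_{LTT}) \geq \frac{32 - 54 s + 27 s^2 + 27 b^2}{16}.
\end{equation*}

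The crucial bookkeeping step is to recognise that the left-hand side above equals $2E + 2 \Stab_R(g_T)$. On one hand, $2E$ accounts for $2(p_{WWW} + p_{LLL}) + (p_{WWT} + p_{LLT}) + \tfrac{1}{3}(p_{WTT} + p_{LTT})$; on the other, the same ordered-triple decomposition used in the proof of Lemma \ref{AprobWL} gives $\Stab_R(g_T) = \prob{g(x_1) = g(x_2) = T} = p_{TTT} + \tfrac{1}{3}(p_{WTT} + p_{LTT})$, accounting for the remaining $2 p_{TTT} + \tfrac{2}{3}(p_{WTT} + p_{LTT})$. The weights $1$, $1/2$, $1/6$ appearing in the definition of $E$ are calibrated precisely so that this identification goes through.

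Finally, the upper bound of Lemma \ref{stabWLbound} applied to the egalitarian function $g_T$ gives $\Stab_R(g_T) \leq p_T^2 + \tfrac{p_T(1-p_T)}{4} = \tfrac{(1-s)(4-3s)}{4}$. Substituting this upper bound into the inequality above, dividing through by $2$, and simplifying (the constant and linear-in-$s$ terms cancel cleanly) produces
\begin{equation*}
E \geq \frac{2s + 3 s^2 + 27 b^2}{32} = \frac{(1-p_T)(5 - 3p_T) + 27 b^2}{32},
\end{equation*}
as required. The main obstacle is the identification of the slack as $2\Stab_R(g_T)$; once noticed, Lemma \ref{stabWLbound} (whose strength comes from $W^{(1)}[g_T] = 0$ and $|\alpha_d| \leq 2^{-d}$ for $d \geq 2$) provides a matching upper bound that exactly closes the gap, and everything else is elementary algebra.
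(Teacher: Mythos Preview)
Your proof is correct and follows essentially the same route as the paper: apply Theorem \ref{sliceWL} to $g_W$ and $g_L$, sum, recognise the left-hand side as $2E + 2\Stab_R(g_T)$ via the identity $\Stab_R(g_T) = p_{TTT} + \tfrac{1}{3}(p_{WTT}+p_{LTT})$, and then bound $\Stab_R(g_T)$ above using Lemma \ref{stabWLbound}. The algebra and the key identification match the paper's argument exactly.
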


\begin{proof}

We define new functions $g_X: S \rightarrow \set{0,1}$ for $\set{W,T,L}$ by $g_X(x) = \mathbbm{1}_{g(x)=X}$.

By definition these functions are all egalitarian, so we can apply Theorem \ref{sliceWL} and Lemma \ref{stabWLbound} to all of them. The value $p$ in those results corresponds to $p_X$ in our context; we let $A_X$ correspond to the event $A$ for the function $g_X$. We get

\begin{equation}
\begin{aligned}
\prob{A_W} &\geq \frac{8 - 27p_W + 27p_W^2}{8} \\
\prob{A_L} &\geq \frac{8 - 27p_L + 27p_L^2}{8}
\end{aligned}
\end{equation}

Now the event $A_W$ corresponds to $\set{g(x_i)}_i$ being either contained (as a set) in $\set{W}$ or in $\set{T,L}$, and the reverse is true for $g_L$, so we have

\begin{equation}
\begin{aligned}
\frac{8 - 27p_W + 27p_W^2}{8} \leq \prob{A_W} &= p_{WWW} + p_{LLL} + p_{LLT} + p_{LTT} + p_{TTT}  \\
\frac{8 - 27p_L + 27p_L^2}{8} \leq \prob{A_L} &= p_{LLL} + p_{WWW} + p_{WWT} + p_{WTT} + p_{TTT}
\end{aligned}
\end{equation}

These inequalities give a lower bound for the family of results given on the right hand side; but this family includes $\set{T,T,T}$ which is consistent. Therefore we need to find an upper bound for $p_{TTT}$.

Drawing $(x_1,x_2,x_3) \sim U(\slice{S}{3})$, we have

\begin{equation}
\begin{aligned}
\Stab_R(g_T) &= \prob{g_T(x_1)=g_T(x_2)=1} \\
&= \prob{g(x_1)=g(x_2)=T} \\
&= p_{TTT} + p_{WTT}/3 + p_{LTT}/3
\end{aligned}
\end{equation}

This is because $p_{WTT}$, for example, measures the probability of three mutually exclusive and equally likely events: that $(g(x_i))_i = (W,T,T)$ or $(T,W,T)$ or $(T,T,W)$.

On the other hand, we determined in Lemma \ref{stabWLbound} that

\begin{equation}
\begin{aligned}
\Stab_R(g_T) \leq p_T^2 + (p_T-p_T^2)/4
\end{aligned}
\end{equation}

Combining all the inequalities we have: 

\begin{equation}
\begin{aligned}
\frac{16-27(p_W+p_L) + 27(p_W^2 + p_L^2)}{8} \leq & 2p_{WWW}+2p_{LLL}+2p_{TTT} \\
&+ p_{WWT}+p_{LLT}+p_{WTT}+p_{LTT} \\
=& 2E + 2p_{TTT} + 2(p_{WTT}+p_{LTT})/3 \\
\leq & 2E + 2p_T^2 + (p_T-p_T^2)/2
\end{aligned}
\end{equation}

Now $p_W+p_L = 1-p_T$ and $p_W^2 + p_L^2 = (p_W+p_L)^2/2 + (p_W-p_L)^2/2 = (1-p_T)^2/2 + b^2/2$, so this becomes

\begin{equation}
\begin{aligned}
E &\geq \frac{16-27+27p_T + 27(1-p_T)^2/2 + 27b^2/2}{16} - p_T^2 - (p_T-p_T^2)/4 \\
&= \frac{1}{32}\left((1-p_T)(5-3p_T) + 27b^2\right)
\end{aligned}
\end{equation}

as required.
\end{proof}

Noting that $0 \leq p_T \leq 1$ we get the following corollary:

\begin{cor}\label{bordaslice}
If $P=0$ then $g(x) = T$ for all $x \in S$. Furthermore, for $\varepsilon$ small, if $P < \varepsilon$ then $p_T > 1 - 16\varepsilon + o(\varepsilon)$.
\end{cor}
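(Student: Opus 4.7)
The plan is to read off both statements directly from the inequality
\[
E \geq \frac{1}{32}\bigl((1-p_T)(5-3p_T) + 27b^2\bigr)
\]
established in Theorem \ref{sliceWTL}, using the fact that $E \leq P$ by the definitions of $E$ and $P$. The key observation is that on $p_T \in [0,1]$ both summands on the right are non-negative: $(1-p_T) \geq 0$, $(5-3p_T) \geq 2 > 0$, and $27b^2 \geq 0$.

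For the first statement, I would simply note that $P = 0$ forces $E = 0$, hence $(1-p_T)(5-3p_T) + 27b^2 \leq 0$; since both summands are non-negative, each must vanish. The factor $5-3p_T$ is strictly positive, so $1-p_T = 0$, i.e.\ $p_T = 1$. This means $g(x) = T$ for all $x \in S$.

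For the quantitative statement, I would substitute $u = 1 - p_T \in [0,1]$, so that $(1-p_T)(5-3p_T) = u(2+3u) = 2u + 3u^2$. From $E \leq P < \varepsilon$ and the theorem (discarding the non-negative $27b^2$ term) we get
\[
2u + 3u^2 < 32\varepsilon.
\]
This already gives the crude bound $u < 16\varepsilon$, and feeding that back yields $3u^2 \leq 3(16\varepsilon)^2 = 768\varepsilon^2 = o(\varepsilon)$, so
\[
u < 16\varepsilon - \tfrac{3}{2}u^2 \leq 16\varepsilon + o(\varepsilon) \quad \text{as } \varepsilon \to 0,
\]
which rearranges to $p_T > 1 - 16\varepsilon + o(\varepsilon)$ as required.

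There is no real obstacle here: once Theorem \ref{sliceWTL} is in hand, both parts are elementary manipulations. The only mild subtlety is recognising that $5 - 3p_T$ is bounded away from zero on $[0,1]$, which is what makes the $P = 0$ case collapse cleanly to $p_T = 1$ rather than to the degenerate solution $p_T = 5/3$ of the quadratic.
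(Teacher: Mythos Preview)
Your proposal is correct and follows essentially the same approach as the paper: both use $E \leq P$, discard the non-negative $27b^2$ term from Theorem~\ref{sliceWTL}, and read off the bound on $p_T$ from the resulting quadratic inequality. The paper's proof is terser (it simply says ``solving this inequality gives the required bound on $p_T$; then sending $\varepsilon$ to $0$ gives $p_T = 1$''), whereas you spell out the substitution $u = 1 - p_T$ and the asymptotics; your ``feeding back'' step is harmless but not actually needed, since $2u \leq 2u + 3u^2 < 32\varepsilon$ already gives $p_T > 1 - 16\varepsilon$ directly.
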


\begin{proof}
Since $E \leq P$ and $b^2 \geq 0$, we require $\varepsilon > (1-p_T)(5-3p_T)/32$; solving this inequality gives the required bound on $p_T$; then sending $\varepsilon$ to $0$ gives $p_T = 1$, so $g \equiv T$ as required.
\end{proof}

\subsection{Extending to the cube}

We are now ready to return to the Boolean cube. We recall that $S=\slice{V}{k}$ and further write $S_t = \slice{V}{t}$ for convenience. We demonstrate the following:

\begin{thm}\label{bordacube}
For any function $g: \powerset{V} \rightarrow \set{W,T,L}$ such that $(g,g,g)$ is a consistent $3$-tuple of functions, where $\restr{g}{S}$ is egalitarian, and for $v \in S_t$, we have

\begin{equation}\label{eq:borda}
g(v) =
\begin{cases}
T & t = k \\
X & t < k \\
-X & t > k
\end{cases}
\end{equation}

for some $X \in \set{W,T,L}$.
\end{thm}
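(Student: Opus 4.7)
The plan is to pin down $g$ on the middle slice $S_k$ using Theorem \ref{sliceWTL}, then propagate outward to every other slice via well-chosen 3-partitions of $V$ that include at least one part of size $k$. Since $(g,g,g)$ is consistent, every equal-sized 3-partition yields a consistent multiset, so the inconsistency probability $P$ vanishes on $\restr{g}{S}$. Applying Corollary \ref{bordaslice} to $\restr{g}{S}$, which is egalitarian by hypothesis, gives $g \equiv T$ on $S_k$.

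Next, for $v \in S_t$ with $t \leq 2k$ and $t \neq k$, pick a partition $V = v \sqcup V_2 \sqcup V_3$ with $V_2 \in S_k$ disjoint from $v$ (possible because $|V \setminus v| \geq k$); then $V_3 \in S_{2k-t}$, and since $g(V_2) = T$, consistency of $\set{g(v), T, g(V_3)}$ forces the multiset to be either $\set{T,T,T}$ or $\set{W,T,L}$, i.e.\ $g(v) = -g(V_3)$ (with the convention $-T = T$). To conclude that $g$ is constant on $S_t$, observe that $g(v) = g(v')$ whenever some common $V_3 \in S_{2k-t}$ is disjoint from both $v, v'$, which occurs iff $|v \cup v'| \leq k + t$. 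For $t \leq k$ this is automatic, and for $k < t \leq 2k$ any two elements of $S_t$ can be chained by single-element swaps that preserve overlap $t - 1 \geq t - k$. Writing $h(t)$ for the common value yields the antisymmetry $h(2k - t) = -h(t)$.

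The crux is then to show $h$ is constant on $[0, k-1]$ with some single value $X$, by induction on $m$: for $1 \leq m \leq k - 1$, the 3-partition of sizes $(k - 1, k - m, k + 1 + m)$ has all parts in $[0, 2k]$, so consistency of $\set{h(k-1), h(k-m), -h(k - 1 - m)}$ applies (using the antisymmetry to rewrite $h(k + 1 + m)$). Assuming $h(k-1) = h(k-m) = X$ inductively, a short case analysis shows the only consistent multisets of the form $\set{X, X, Y}$ force $Y = -X$ (all three $T$ in the $X = T$ case, else $\set{X, X, -X}$), hence $h(k-1-m) = X$. The base case $m = 1$ uses $(k-1, k-1, k+1)$. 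Combined with the antisymmetry this gives $h \equiv X$ on $[0, k-1]$ and $h \equiv -X$ on $[k+1, 2k]$.

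Finally, for $t > 2k$ the complement $v^c$ lies in $S_{3k - t}$ with $3k - t < k$, so $g(v^c) = X$ is already known; the partition $V = v \sqcup v^c \sqcup \emptyset$ then forces $\set{g(v), X, X}$ consistent, hence $g(v) = -X$. The main obstacle is the induction on the small slices: partition sizes must remain inside $[0, 2k]$ where $h$ is already established, and the $X = T$ case is the most delicate since consistency there permits both $\set{T,T,T}$ and $\set{W,T,L}$, so forcing $h(k-1-m) = T$ relies on the fact that only one entry of the multiset is unknown.
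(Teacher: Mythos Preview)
Your proof is correct and follows the same overall architecture as the paper: first use Corollary~\ref{bordaslice} to force $g\equiv T$ on $S_k$, then propagate to the other slices via $3$-partitions containing a part of size $k$, and finally handle $t>2k$ with the partition $v\sqcup v^c\sqcup\emptyset$.

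The differences are minor but worth noting. For constancy on $S_t$ with $k<t\le 2k$, the paper avoids your chaining step: having already shown $g$ constant on $S_{2k-t}$ (with $2k-t<k$), for any $z_1\in S_{2k-t}'=S_t$ it picks a $k$-subset $x\subset V\setminus z_1$, sets $y=V\setminus(z_1\cup x)\in S_{2k-t}$, and reads off $g(z_1)=-g(2k-t)$ directly. For the step showing $h$ is constant on $[0,k-1]$, the paper invokes a black-box lemma from a companion paper (Lemma~\ref{numberlineelection}, which classifies all consistent functions on $\{0,\dots,2k\}$), whereas your explicit induction via the $(k-1,k-m,k+1+m)$ partitions is self-contained and arguably cleaner for this application. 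Both approaches yield the same conclusion with comparable effort.
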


\begin{proof}
Restricting $g$ to $S$ gives an egalitarian function on the slice for which $P=0$, so $\restr{g}{S} \equiv T$ by Corollary \ref{bordaslice}.

Now for $t < k$, and for any $x, y \in S_t$ (not necessarily disjoint), we set $z_0 = V \setminus (x \cup y)$. Then $|z_0| \geq 3k - 2t > 2k-t$. Taking $z$ a $2k-t$-subset of $z_0$, we find that $(x,z, V \setminus (x \cup z))$ partitions $V$; so by the consistency of $g$ we have that $\set{g(x),g(z),g(V \setminus (x \cup z))}$ is a consistent multiset. Now $|V \setminus (x \cup z)| = k$ so $g(V \setminus (x \cup z)) = T$, and $g(x) = -g(z)$. But the same argument gives that $g(y) = -g(z)$. So we find that $g$ is constant on $S_t$; we call this value $g(t)$. Moreover, for $z_1 \in S_{2k-t}$ we take any $x$ a $k$-subset of $V \setminus z_1$ and $y = V \setminus (z_1 \cup x)$, so again we have $g(x) = T$ and $g(y) = g(t)$, so $g(z_1) = -g(t)$ and $g$ is constant on $S_{2k-t}$; we call this value $g(2k-t)$, and we set $g(k)=T$.

We now apply the following result from \autocite{paper1}:

\begin{lem}\label{numberlineelection}
For $\ell \geq 0$ let $I = \set{0, 1, \dots, \ell}$ and suppose $g: I \rightarrow \set{W,T,L}$. Moreover, let $m \in \mathbbm{Z}$ be such that $\ell \leq m \leq 2\ell$, and suppose that for all $i, j, k$ with $i+j+k = m$, the multiset $\set{g(i), g(j), g(k)}$ is consistent. Then $g$ is given by $g(i) = \varphi(\kappa(i-m/3))$ for some $\kappa \in \mathbbm{R}$.
\end{lem}

Set $\ell = 2k$ and $m=3k$. Given $a, b, c$ with $a+b+c$ set $x_a \in S_a, x_b \in S_b, x_c \in S_c$ disjoint; then $\set{g(x_a),g(x_b),g(x_c)} = \set{g(a),g(b),g(c)}$ is consistent, so the conditions for Lemma \ref{numberlineelection} are met, and we find that for some $X$ we have $g(i) = X$ for $i<k$, and $g(i)=-X$ for $i>k$.

Finally, for $z \notin \cup_{i \leq 2k} S_i$ we have $|z| > 2k$. Then

\begin{equation}
\begin{aligned}
\set{g(\emptyset),g(V \setminus z), g(z)} &= \set{g(0),g(3k-|z|),g(z)} \\
&= \set{X,X,g(z)}
\end{aligned}
\end{equation}

is consistent, so $g(z) = -X$, and we have equation (\ref{eq:borda}). This completes the proof.
\end{proof}

This allows us to complete our investigation of the case of $|V|=3k$ with the following corollary:

\begin{cor}\label{thm:FCTA3}
Any SWF $F$ on $V$ finite with $|V| = 3k$ and $C = \set{c_1,c_2,c_3}$ satisfying CC, MIIA, N and TA is an unweighted Borda rule.
\end{cor}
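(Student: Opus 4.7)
The plan is to stitch together Theorem \ref{setfuncstofull} and Theorem \ref{bordacube}, and then read off the classification of $g$ as an unweighted Borda rule. First, since $F$ satisfies CC and MIIA, Theorem \ref{setfuncstofull} gives a consistent 3-tuple $(g_1, g_2, g_3)$ of set functions corresponding to $F$. Condition N collapses the three relative SWFs $f_{i,i+1}$ into a single relative SWF $f$ on $A$, so $g_1 = g_2 = g_3 =: g$ (each $g_i = f \circ \zeta^{-1}$). Condition TA further gives that $g$ is transitive-symmetric, and hence $\restr{g}{S}$ is egalitarian.

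Second, I would invoke Theorem \ref{bordacube} directly on this $g$: the hypotheses match exactly, so for some $X \in \set{W,T,L}$ we obtain
\begin{equation*}
g(v) = \begin{cases} T & \text{if } |v| = k, \\ X & \text{if } |v| < k, \\ -X & \text{if } |v| > k. \end{cases}
\end{equation*}

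Third, I would translate this formula for $g$ back into a description of $F$. For $a \in A_{i,i+1}$, setting $t = |\zeta(a)|$ yields $d_1(a) = (n - t) - 2t = 3(k - t)$, so $\varphi(d_1(a)) = W$ iff $t < k$, equals $T$ iff $t = k$, and equals $L$ iff $t > k$. When $X = W$, this coincides with $g \circ \zeta = f_{i,i+1}$, identifying $F$ with the positive unweighted Borda rule $B_1$; when $X = L$ it coincides with $\varphi \circ d_{-1}$, giving $F = B_{-1}$; and when $X = T$ it gives the tie rule $B_0$. In every case $F$ is an unweighted Borda rule, as required.

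The heavy lifting has all been absorbed into Theorem \ref{bordacube} (which itself depends on the harmonic-analysis-on-the-slice machinery of Section \ref{FCTA3}); the only real obstacle in this corollary is bookkeeping: checking that N forces the three $g_i$ to coincide and that TA implies egalitarianism on $S$, and verifying the arithmetic $d_1(a) = 3(k - |\zeta(a)|)$ that matches the trichotomy of $g$ to the sign of the Borda margin.
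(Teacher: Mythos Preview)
Your proposal is correct and follows essentially the same approach as the paper's proof: invoke Theorem \ref{setfuncstofull}, use N to identify the three $g_i$, use TA to obtain egalitarianism on $S$, apply Theorem \ref{bordacube}, and then read off the three unweighted Borda rules from the three possible values of $X$. Your explicit computation $d_1(a)=3(k-|\zeta(a)|)$ is exactly the identification the paper makes when it says ``$|\zeta(a)|<k$, which is precisely when $\mathbbm{1}\cdot a>0$''.
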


\begin{proof}
$F$ corresponds to a consistent $3$-tuple of functions $g_i: \powerset{V} \rightarrow \set{W,T,L}$  by Theorem \ref{setfuncstofull}. Since $F$ satisfies condition N, all the $g_i$ must be identical, and we call this unique function $g$. Now $F$ satisfies TA, so we have that $g$ is transitive-symmetric; it follows that $\restr{g}{S}$ is egalitarian. Then by Theorem \ref{bordacube} we know that $g$ is described by equation (\ref{eq:borda}) for some $X$.

If $X=T$ then $g \equiv T$ and $f_{i,j} \equiv T$ for all $i \neq j$; so $F$ consistently returns a three-way tie, and is the unweighted Borda rule with weight $w \equiv 0$.

If $X=W$ then $f_{i,i+1}(a) = W$ whenever $|\zeta(a)| < k$, which is precisely when $\mathbbm{1}\cdot a >0$, and $f_{i,i+1}(a)=L$ whenever $|\zeta(a)|>k$, which is precisely when $\mathbbm{1}\cdot a < 0$. Then $f_{i+1,i}(a) = -f_{i,i+1}(-a)$ so $f_{i+1,i}$ also gives $W$ when $\mathbbm{1}\cdot a >0$ and $L$ when $\mathbbm{1}\cdot a <0$; so all $f_{i,j}$ are identical to $\varphi \circ d_1$ and $F$ is the positive unweighted Borda rule. By the same argument, if $X=L$ then $F$ is the negative unweighted Borda rule. Thus $F$ is always an unweighted Borda rule, as required.
\end{proof}

\section{All SWFs are almost Borda}\label{FCTA12}

Having seen in Corollary \ref{thm:FCTA3} that when $|V|=3k$ the only SWFs satisfying CC, MIIA, N and TA are Borda, we return to the cases of $|V|=3k+1$ and $|V|=3k+2$, where Theorem \ref{thm:FCTA12exist} demonstrated that there exist strongly non-Borda SWFs. On the other hand, Lemma \ref{bordalike} showed that our constructions all closely approximated Borda rules. In this section we prove that sensible SWFs (mamely, SWFs with the PR condition) are close to Borda rules, in the sense that for any Borda score outside of $\set{\pm 1, 0}$, the SWF agrees with the Borda result more than 96\% of the time.

For $V=\set{1, \dots, 3k+1}$ or $V=\set{1, \dots, 3k+2}$, there is no single slice on which we can find $3$-tuples of sets partitioning $V$. Instead we must consider the pair of slices of sets whose densities in $V$ surround the critical value of $1/3$. We mimic the argument from section \ref{FCTA3} as closely as possible, but the two slices make the situation much more complicated. We let $k_1 = k$ in the case of $n=3k+2$ and $k_1 = k+1$ in the case of $n=3k+1$; then $k_2=k_3 = \frac{n-k_1}{2}$.

We let $S = S_k \sqcup S_{k+1} = \slice{V}{k} \sqcup \slice{V}{k+1}$, and in place of $\slice{S}{3}$ we write

\begin{equation}
D = \set{(x_1,x_2,x_3) | x_i \in S_{k_i}, x_1 \sqcup x_2 \sqcup x_3 = V}
\end{equation}

As usual we write the uniform distribution on $D$ as $U(D)$. Then for any $\vec{x} \in D_i$ we have $\prob{U(D) = \vec{x}} = \binom{n}{k_1,k_2,k_3}^{-1}$. We also define a probability distribution on $S$ as follows. We let $K \in \set{k,k+1}$ be a probability distribution with $K \sim k_{U[\set{1,2,3}]}$. Then we let $\Sigma$ be a probability distribution defined by $\Sigma = U(S_K)$; in other words, with probability $2/3$ we pick an element of $S_{k_2} = S_{k_3}$ uniformly at random, and with probability $1/3$ we pick an element of $S_{k_1}$ uniformly at random.

Our definition of egalitarianism needs to be adjusted to apply to multiple slices at once:

\begin{defn}[Egalitarian on a union of slices]
A function $g$ whose domain is the union of multiple slices $(S_i)_{i \in I}$ is egalitarian if and only if its restriction to any slice is egalitarian.
\end{defn}

As above we define the inner product using the distribution $\Sigma$ for functions on $S$:

\begin{equation}
\innerproduct{f}{g} = \expect[x \sim \Sigma]{f(x)g(x)}
\end{equation}

On the other hand, for functions on $S_k$ we use $U(S_k)$ and for functions on $S_{k+1}$ we use $U(S_{k+1})$, and define

\begin{equation}
\begin{aligned}
\innerproduct{f}{g}_k &= \expect[x \sim U(S_k)]{f(x)g(x)} \\
\innerproduct{f}{g}_{k+1} &= \expect[x \sim U(S_{k+1})]{f(x)g(x)}
\end{aligned}
\end{equation}

Note that, writing $\restr{f}{S_k} = f_k$ and so on, we have

\begin{equation}\label{eq:innerproductsums}
\begin{aligned}
\innerproduct{f}{g} &= \expect[x \sim \Sigma]{f(x)g(x)} \\
&= \frac{1}{3} \sum_{i=1}^3 \expect[x \sim U(S_{k_i})]{f_{k_i}(x)g_{k_i}(x)} \\
&= \frac{1}{3} \sum_{i=1}^3 \innerproduct{f_{k_i}}{g_{k_i}}_{k_i}
\end{aligned}
\end{equation}

We also require multiple replacement distributions. For $i \neq j \in \set{1,2,3}$, and for $x \in \slice{V}{i}$, we let $R_{i,j}(x) \in S_j$ be the uniform distribution over sets $y \in S_j$ disjoint from $x$. Thus

\begin{equation}
\prob{R_{i,j}(x)=y} =
\begin{cases}
\binom{n-i}{j}^{-1} & x \cap y = \emptyset \\
0 & x \cap y \neq \emptyset
\end{cases}
\end{equation}

Now for all $x \in S$ we define $R(x)$ to be a distribution on $S$ given by

\begin{equation}
R(x) = \frac{R_{k_{i+1},k_i}(x) + R_{k_{i-1},k_i}(x)}{2} \quad \text{where} \quad x \in S_{k_i}
\end{equation}

Then we define the replacement operators $T_{i,j}$ and $T$ by

\begin{equation}
\begin{aligned}
T_{i,j}h(x) &= \expect[y \sim R_{i,j}(x)]{h(y)} \\
Th(x) &= \expect[y \sim R(x)]{h(y)}
\end{aligned}
\end{equation}

Note that $T_{i,j}$ is an operator from $S_j^*$ to $S_i^*$ and $T$ is an operator from $S^*$ to itself. In fact, writing $\restr{g}{S_k} = g_k$ and $\restr{g}{S_{k+1}} = g_{k+1}$,

\begin{equation}\label{eq:trestrictions}
\restr{(Tg)}{S_{k_i}} = \frac{T_{k_i,k_{i+1}}g_{k_{i+1}} + T_{k_i,k_{i-1}}g_{k_{i-1}}}{2}
\end{equation}

Finally, we define $\Stab(g) = \innerproduct{g}{T g}$ as usual. We also define for all $i \neq j$

\begin{equation}
\Stab_{k_i,k_j}(g) = \innerproduct{T_{k_i,k_j}g_{k_j}}{g_{k_i}}
\end{equation}

Observe that from (\ref{eq:innerproductsums}) and (\ref{eq:trestrictions}) we have

\begin{equation}\label{eq:stabg}
\begin{aligned}
\Stab(g) &= \frac{1}{3} \sum_{i=1}^3 \innerproduct{g_{k_i}}{\restr{(Tg)}{S_{k_i}}}_{k_i} \\
&= \frac{1}{6}\sum_{i \neq j} \innerproduct{g_{k_i}}{T_{k_i,k_j}g_{k_j}}_{k_i} \\
&= \frac{1}{6} \sum_{i \neq j} \Stab_{k_i,k_j}(g)
\end{aligned}
\end{equation}

We now establish a series of results allowing us to manipulate these distributions and operators easily.

\begin{lem}
For $(x_1,x_2,x_3) \sim U(D)$ we have:
\begin{enumerate}
\item $(x_1,x_3,x_2) \sim U(D)$
\item $x_i \sim U(S_{k_i})$
\item $x_i \sim R_{k_j,k_i}(x_j)$ for $i \neq j$
\setcounter{distresultscount}{\value{enumi}}
\end{enumerate}
Moreover,
\begin{enumerate}
\setcounter{enumi}{\value{distresultscount}}
\item if $y_a \sim U(S_{k_a})$ and $y_b \sim R_{k_a,k_b}(y_a)$ for $a \neq b$, then setting $c$ such that $\set{a,b,c} = \set{1,2,3}$ and $y_c = V \setminus (y_a \cup y_b)$ we have $(y_1,y_2,y_3) \sim U(D)$
\item $T_{k_i,k_j}^* = T_{k_j,k_i}$ for $i\neq j$
\item $T$ is self-adjoint
\item $\Stab(g) = \frac{\Stab_{k_2,k_2}(g) + 2\Stab_{k_1,k_2}(g)}{3}$
\end{enumerate}
\end{lem}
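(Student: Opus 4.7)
The plan is to mirror the analogous lemma from Section~\ref{FCTA3}, handling items (1)--(4) by direct counting and symmetry, then using those to derive the adjointness results (5)--(6) and finally the stability decomposition in (7).

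For items (1)--(4): Swapping $x_2$ and $x_3$ preserves membership in $D$ because $k_2 = k_3$, which immediately gives (1). For (2), I would count the fiber of the projection $D \to S_{k_i}$ sending $(x_1,x_2,x_3) \mapsto x_i$: it has size $\binom{n-k_i}{k_j, k_\ell}$ where $\set{i,j,\ell} = \set{1,2,3}$, so dividing by $|D| = \binom{n}{k_1,k_2,k_3}$ yields $\binom{n}{k_i}^{-1}$, matching $U(S_{k_i})$. For (3), after conditioning on $x_j$, the remaining joint is uniform on ordered partitions of $V \setminus x_j$ into a $k_i$-set and a $k_\ell$-set, so the marginal of $x_i$ hits each $k_i$-subset of $V \setminus x_j$ with probability $\binom{n-k_j}{k_i}^{-1}$, which is exactly $R_{k_j,k_i}(x_j)$. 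Item (4) is the converse chain rule: the joint density of $(y_a,y_b)$ is $\binom{n}{k_a}^{-1}\binom{n-k_a}{k_b}^{-1} = \binom{n}{k_1,k_2,k_3}^{-1}$, so appending $y_c = V \setminus (y_a \cup y_b)$ yields a draw from $U(D)$.

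For (5), I would unfold both sides. Using (2) and (3),
\begin{equation*}
\innerproduct{T_{k_i,k_j}g}{h}_{k_i} = \expect[x_i \sim U(S_{k_i})]{h(x_i)\, \expect[x_j \sim R_{k_i,k_j}(x_i)]{g(x_j)}} = \expect[(x_1,x_2,x_3)\sim U(D)]{h(x_i)g(x_j)},
\end{equation*}
and the same computation with the roles of $i$ and $j$ swapped produces $\innerproduct{g}{T_{k_j,k_i}h}_{k_j}$, proving $T_{k_i,k_j}^* = T_{k_j,k_i}$. For (6), the decompositions (\ref{eq:innerproductsums}) and (\ref{eq:trestrictions}) express $\innerproduct{Tg}{h}$ as a weighted sum of six terms $\tfrac{1}{6}\innerproduct{T_{k_i,k_j}g_{k_j}}{h_{k_i}}_{k_i}$; applying (5) to each converts this into the corresponding expansion of $\innerproduct{g}{Th}$.

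For (7), I would start from (\ref{eq:stabg}), namely $\Stab(g) = \tfrac{1}{6}\sum_{i\neq j}\Stab_{k_i,k_j}(g)$. By (5) the values $\Stab_{k_i,k_j}(g)$ and $\Stab_{k_j,k_i}(g)$ coincide, so the six ordered pairs collapse to three unordered ones each with weight $1/3$. Because $k_2 = k_3$, the two cross-slice terms $\Stab_{k_1,k_2}(g)$ and $\Stab_{k_1,k_3}(g)$ agree, and the same-size term is $\Stab_{k_2,k_3}(g)$, yielding the claimed formula. I do not expect any genuine obstacle here; the only pitfall is bookkeeping, keeping the two labels $k_2$ and $k_3$ notationally distinct (though numerically equal) so that no ordered pair is accidentally counted twice or omitted.
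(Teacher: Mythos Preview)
Your proposal is correct and follows essentially the same approach as the paper: direct counting and symmetry for (1)--(4), unfolding expectations over $U(D)$ for the adjointness in (5)--(6), and collapsing equation~(\ref{eq:stabg}) via $\Stab_{k_i,k_j}(g)=\Stab_{k_j,k_i}(g)$ together with $k_2=k_3$ for (7). If anything, you are slightly more explicit than the paper in spelling out why the three unordered pairs in (7) reduce to one same-slice and two identical cross-slice terms.
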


\begin{proof}
We take $(x_1,x_2,x_3) \sim U(D)$. Then:
\begin{enumerate}
\item There is a bijection within $D$ given by $(x_1,x_2,x_3) \mapsto (x_1,x_3,x_2)$. Since the distribution on $D$ is uniform we find that $\prob{U(D) = (x_1,x_2,x_3)} = \prob{U(D) = (x_1,x_3,x_2)}$ as required.
\item Given $x_i$, there are $\binom{n-k_i}{k_{i+1}}$ choices for $(x_{i-1},x_{i+1})$ in $D$; so 

\begin{equation}
\prob[\vec{y} \sim U(D)]{y_i = x_i} = \binom{n-k_i}{k_{i+1}}\binom{n}{k_1,k_2,k_3}^{-1} = \binom{n}{k_i}^{-1} = \prob[y \sim U(S_{k_i})]{y=x_i}
\end{equation}

as required.
\item We know that $(x_1,x_2,x_3)$ is selected uniformly, so $(x_{i+1},x_{i-1})$ is selected uniformly from the $(k_{i+1},k_{i-1})$-partitions of $V \setminus x_i$. Thus $x_j$ is selected uniformly from $\slice{(V \setminus x_i)}{k_j}$, so $x_j \sim R_{k_i,k_j}(x_i)$ as required.
\item This is simply the converse of the previous two results; we have seen that drawing $\vec{x}$ from $U(D)$ gives the distribution found by drawing $x_i$ uniformly followed by $x_j$ through $R_{k_i,k_j}$ and then setting $x_k$ to be the remainder of $V$. It follows immediately that drawing $x_i$ followed by $x_j$ and $x_k$ in this way gives $(x_1,x_2,x_3)$ distributed like $U(D)$.
\item Let $f \in S_{k_i}^*$ and $g \in S_{k_j}^*$ with $i \neq j$. Then

\begin{equation}
\begin{aligned}
\innerproduct{f}{T_{k_i,k_j}g}_{k_i} &= \expect[x \sim U(S_{k_i})]{f(x)T_{k_i,k_j}g(x)} \\
&= \expect[x \sim U(S_{k_i})]{\expect[y \sim R_{k_i,k_j}(x)]{f(x)g(y)}} \\
&= \expect[\vec{z} \sim U(D)]{f(z_i)g(z_j)} \\
&= \innerproduct{g}{T_{k_j,k_i}f}_{k_j}
\end{aligned}
\end{equation}

so $T_{k_i,k_j}$ and $T_{k_j,k_i}$ are adjoint for all $i \neq j$. Note that this implies $\Stab_{k_i,k_j}(g) = \Stab_{k_j,k_i}(g)$.
\item Let $f,g \in S^*$. Let $\restr{f}{S_k} = f_k$ and so on. Then

\begin{equation}
\begin{aligned}
\innerproduct{f}{Tg} &= \frac{1}{3}\sum_{i=1}^3 \innerproduct{f_{k_i}}{\restr{(Tg)}{S_{k_i}}}_{k_i} \\
&= \frac{1}{6} \sum_{i \neq j} \innerproduct{f_{k_i}}{T_{k_i,k_j}g_{k_j}}_{k_i} \\
&= \frac{1}{6} \sum_{i \neq j} \innerproduct{T_{k_j,k_i}f_{k_i}}{g_{k_j}}_{k_j} \\
&= \innerproduct{Tf}{g}
\end{aligned}
\end{equation}

so $T$ is self-adjoint.
\item This is immediate from combining equation (\ref{eq:stabg}) with the observation above that
\begin{equation}
\Stab_{k_i,k_j}(g) = \Stab_{k_j,k_i}(g)
\end{equation}
\end{enumerate}
\end{proof}

Before proceeding to the key result, we will study the actions of these operators on the harmonic polynomials on $S_k$ and $S_{k+1}$ introduced above. Recall that $\chi_{\phi(B),B}^{(k_i)}$ is the formal polynomial given by $\prod_i (x_{\phi(B)_i} - x_{B_i})$; then for each $t$ separately we consider the real-valued function $\chi_{\phi(B),B}^{(t)}: S_t \rightarrow \set{\pm 1, 0}$.

The following result is an analogue to Lemma \ref{operatorWL}:

\begin{lem}\label{operator12}
For $i \neq j$ and for $d \leq k_j$ there exists a real number $\alpha_d^{(k_i,k_j)}$ such that for all $B$ with $|B|=d$ we have $T_{k_i,k_j} \chi_{\phi(B),B}^{(k_j)} = \alpha_d^{(k_i,k_j)}\chi_{\phi(B),B}^{(k_i)}$, where

\begin{equation}
\alpha_d^{(k_i,k_j)} = (-1)^d \binom{n-k_i-d}{k_j-d}\binom{n-k_i}{k_j}^{-1}
\end{equation}
\end{lem}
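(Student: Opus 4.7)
The plan is to verify the formula pointwise: for each $x \in S_{k_i}$, compute $T_{k_i,k_j}\chi_{\phi(B),B}^{(k_j)}(x)$ directly from its definition and check it equals $\alpha_d^{(k_i,k_j)} \chi_{\phi(B),B}^{(k_i)}(x)$. Writing $\chi(y) = \prod_{\ell=1}^d (y_{a_\ell} - y_{b_\ell})$ with $\phi(B) = \{a_1,\dots,a_d\}$ and $B = \{b_1,\dots,b_d\}$, the crucial observation is that $y \cap x = \emptyset$ forces $y_m = 0$ for every $m \in x$, which drastically restricts the possible nonzero contributions according to how each pair $(a_\ell, b_\ell)$ sits relative to $x$.

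The next step is to partition $\{1,\dots,d\}$ according to $x$: let $P = \{\ell : a_\ell \in x, b_\ell \notin x\}$, $N = \{\ell : a_\ell \notin x, b_\ell \in x\}$, $Z_1 = \{\ell : a_\ell, b_\ell \in x\}$, $Z_0 = \{\ell : a_\ell, b_\ell \notin x\}$. In the generic case $Z_0 = Z_1 = \emptyset$ we have $\chi(x) = (-1)^{|N|}$; for $y$ disjoint from $x$, each factor collapses to $-y_{b_\ell}$ (if $\ell \in P$) or $y_{a_\ell}$ (if $\ell \in N$), so $\chi(y)$ is nonzero only when $y$ contains all $d$ of the elements $\{b_\ell\}_{\ell \in P} \cup \{a_\ell\}_{\ell \in N}$, in which case $\chi(y) = (-1)^{|P|}$. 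There are $\binom{n-k_i-d}{k_j-d}$ such $y$ out of the $\binom{n-k_i}{k_j}$ available, so $T_{k_i,k_j}\chi(x) = (-1)^{|P|}\binom{n-k_i-d}{k_j-d}\binom{n-k_i}{k_j}^{-1}$. Using $|P|+|N|=d$ gives $(-1)^{|P|} = (-1)^d (-1)^{|N|} = (-1)^d \chi(x)$, which matches the claimed $\alpha_d^{(k_i,k_j)}\chi(x)$ exactly.

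The main obstacle is the boundary case $\chi(x) = 0$. The subcase $Z_1 \neq \emptyset$ is immediate, since any $y$ disjoint from $x$ has $y_{a_\ell} - y_{b_\ell} = 0$ for $\ell \in Z_1$, killing $\chi(y)$ identically. The subcase $Z_1 = \emptyset$, $Z_0 \neq \emptyset$ requires a genuine cancellation: I would parametrise each contributing $y$ by the subset $C = \{\ell \in Z_0 : a_\ell \in y\}$ of ``ambiguous'' pairs (noting that for $\ell \in Z_0 \setminus C$ necessarily $b_\ell \in y$, or else the factor vanishes) together with its residual intersection $y''$ with $V \setminus (x \cup \bigcup_\ell \{a_\ell, b_\ell\})$. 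The factors for $\ell \in Z_0$ then contribute a sign $(-1)^{|Z_0|-|C|}$, and for each fixed $y''$ summing over $C \subset Z_0$ gives $\sum_{C \subset Z_0}(-1)^{|Z_0|-|C|} = (1-1)^{|Z_0|} = 0$, since $|Z_0| \geq 1$. This kills the expectation and yields $T_{k_i,k_j}\chi(x) = 0 = \alpha_d^{(k_i,k_j)}\chi(x)$. Since only $d = |B|$ enters the final eigenvalue, the scalar is independent of the choice of $B$ and $\phi(B)$, completing the lemma.
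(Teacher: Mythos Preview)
Your proof is correct and follows essentially the same pointwise strategy as the paper: split according to how each pair $(a_\ell,b_\ell)$ meets $x$, handle the generic case by counting the $y$'s that contain the $d$ forced elements, and show the expectation vanishes when $\chi^{(k_i)}(x)=0$. The only cosmetic differences are that the paper reduces the generic case by a ``without loss of generality $\phi(B)_\ell\in x$ for all $\ell$'' followed by sign-flipping, where you track $P$ and $N$ explicitly; and for the $Z_0\neq\emptyset$ case the paper uses a single sign-reversing transposition on one pair, whereas you sum over all $C\subset Z_0$ and collapse via $(1-1)^{|Z_0|}=0$.
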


\begin{proof}
Write $\chi = \chi_{\phi(B),B}$. For $x \in S_{k_i}$ we have

\begin{equation}
T_{k_i,k_j} \chi^{(k_j)}(x) = \expect[y \sim R_{k_i,k_j}(x)]{\chi^{(k_j)}(y)}
\end{equation}

Now if $B_i, \phi(B)_i \in x$ for any $i$ then $B_i, \phi(B)_i \notin y$ and $\chi^{(k_j)}(y)\equiv 0$ because it has the factor $(y_{\phi(B)_i} - y_{B_i})$. Similarly if $B_i, \phi(B)_i \notin x$ for any $i$ then let $\gamma: V \rightarrow V$ be the transposition of $\phi(B)_i$ and $B_i$, and let $\Gamma$ be the action induced on $S_{k_j}$; this is a convolution. If $y$ is a fixed point in the convolution then either $B_i, \phi(B)_i \notin y$ and $\chi^{(k_j)}(y) = 0$ or $B_i, \phi(B)_i \in y$ and $\chi^{(k_j)}(y) = 0$; otherwise $y$ and $\Gamma(y)$ are equally likely to be drawn from $R_{k_i,k_j}(x)$ and $\chi^{(k_j)}(y) = -\chi^{(k_j)}(\Gamma(y))$. Putting this all together we have $\expect{\chi^{(k_j)}} = 0$. So for $x$ containing both or neither of $B_i$ and $\phi(B)_i$ we have $\chi^{(k_i)}(x) = 0$ and $T_{k_i,k_j}\chi^{(k_j)}(x) = 0$.
 
Otherwise, assume without loss of generality that $\phi(B)_i \in x$ and $B_i \notin x$ for all $i$ (otherwise we can just reverse the signs of the following calculations for each $i$ with $\phi(B)_i \notin x$ and $B_i \in x$). We have $\chi^{(k_i)}(x)=1$, and for $y \in R_{k_i,k_j}(x)$ we have $\chi^{(k_j)}(y)\neq 0$ if and only if $B_i \in y$ for all $i$; and in this case $\chi^{(k_j)}(y) = (-1)^d$. There are $\binom{n-k_i - d}{k_j-d}$ such choices of $y$ out of a total of $\binom{n-k_i}{k_j}$, so $T_{k_i,k_j}\chi^{(k_j)}(x) = (-1)^d \binom{n-k_i-d}{k_j-d}\binom{n-k_i}{k_j}^{-1} = \alpha_d^{(k_i,k_j)}$. Similarly, if $\chi^{(k_i)}(x) = -1$  then $T_{k_i,k_j}\chi^{(k_j)}(x) = -\alpha_d^{(k_i,k_j)}$. We have seen that if $\chi^{(k_i)}(x)=0$ then $T_{k_i,k_j}\chi^{(k_j)}(x) = 0$. So we have
 
\begin{equation}
T_{k_i,k_j}\chi^{(k_j)} = \alpha_d^{(k_i,k_j)}\chi^{(k_i)}
\end{equation}

as required.
\end{proof}

We are now ready to find the (real and orthogonal) eigenspaces of the self-adjoint operator $T$. We abuse notation by letting functions defined on $S_k$ also act as functions on $S$ taking $0$ on $S_{k+1}$, and the same for functions defined on $S_{k+1}$. Then we write $a \chi_{\phi(B),B}^{(k_2)} + b \chi_{\phi(B),B}^{(k_1)}$ as $\chi_{\phi(B),B}^{a, b}$.

\begin{lem}\label{teigenspaces}
The eigenspaces of $T$ are $E_d^{\pm}$ for all $0 \leq d \leq k$ and $E_{k+1}$, where

\begin{equation}
\begin{aligned}
E_d^{\pm} &= \left\langle \chi_{\phi(B),B}^{\lambda_d^{\pm},\alpha_d^{(k_1,k_2)}} : |B| = d \right\rangle \quad d \leq k \\
E_{k+1} &= \left\langle \chi_{\phi(B),B}^{(k+1)} : |B| = k+1 \right\rangle
\end{aligned}
\end{equation}

where

\begin{equation}\label{eq:quadraticsolns}
\begin{aligned}
\lambda_d^{\pm} &= \frac{\alpha_d^{(k_2,k_2)} \pm \sqrt{\alpha_d^{(k_2,k_2)2} + 8 \alpha_d^{(k_1,k_2)}\alpha_d^{(k_2,k_1)}}}{4} \\
\lambda_{k+1} &=
\begin{cases}
0 & n=3k+1 \\
\alpha_d^{(k+1,k+1)}/2 & n=3k+2
\end{cases}
\end{aligned}
\end{equation}

are the corresponding eigenvalues.
\end{lem}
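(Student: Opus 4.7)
The plan is to restrict $T$ to small invariant subspaces indexed by the harmonic polynomials of Lemma \ref{operator12}, then diagonalise each. For each increasing sequence $B \subset V$ with $|B| = d \leq k$ I would consider the 2-dimensional subspace $V_{B, d} = \langle \chi_{\phi(B), B}^{(k_1)}, \chi_{\phi(B), B}^{(k_2)} \rangle$, and for $|B| = k+1$ the 1-dimensional $W_B = \langle \chi_{\phi(B), B}^{(k+1)} \rangle$ (supported only on $S_{k+1}$, because $\chi_{\phi(B), B}^{(k)}$ is identically zero when $|B| > k$). The harmonic polynomials form an orthogonal basis of each $S_t^*$, so these subspaces are pairwise orthogonal under the $\Sigma$-inner product and their dimensions sum to $\dim S^* = |S_k| + |S_{k+1}|$.

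To compute the eigenvalues of $T|_{V_{B, d}}$, I would use $k_2 = k_3$ to simplify the action of $T$: on $S_{k_1}$ both operators appearing in the averaged definition coincide with $T_{k_1, k_2}$, and on $S_{k_2}$ the average is $(T_{k_2, k_1} + T_{k_2, k_3})/2$, where $T_{k_2, k_3}$ acts on $S_{k_3}^* = S_{k_2}^*$ with eigenvalue $\alpha_d^{(k_2, k_3)} = \alpha_d^{(k_2, k_2)}$. Lemma \ref{operator12} then gives $T|_{V_{B, d}}$ in the basis $(\chi_{\phi(B), B}^{(k_2)}, \chi_{\phi(B), B}^{(k_1)})$ as
\[
\begin{pmatrix} \alpha_d^{(k_2, k_2)}/2 & \alpha_d^{(k_2, k_1)}/2 \\ \alpha_d^{(k_1, k_2)} & 0 \end{pmatrix},
\]
whose characteristic polynomial $2\lambda^2 - \alpha_d^{(k_2, k_2)} \lambda - \alpha_d^{(k_1, k_2)} \alpha_d^{(k_2, k_1)}$ has roots $\lambda_d^{\pm}$ as in (\ref{eq:quadraticsolns}). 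Back-substitution into $(M - \lambda I)v = 0$ yields the eigenvector $\lambda_d^{\pm} \chi_{\phi(B), B}^{(k_2)} + \alpha_d^{(k_1, k_2)} \chi_{\phi(B), B}^{(k_1)} = \chi_{\phi(B), B}^{\lambda_d^{\pm}, \alpha_d^{(k_1, k_2)}}$, and taking the span over all $B$ with $|B| = d$ produces $E_d^{\pm}$.

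For $W_B$ with $|B| = k+1$, the argument of Lemma \ref{operator12} gives $T_{k, k+1} \chi_{\phi(B), B}^{(k+1)} \equiv 0$, because the target polynomial $\chi_{\phi(B), B}^{(k)}$ vanishes identically on $S_k$. Hence $T$ acts as a scalar on $W_B$: when $n = 3k+1$ (so $S_{k+1} = S_{k_1}$) the action on $S_{k_1}$ reduces to $T_{k_1, k_2}$ applied to a zero $S_{k_2}$-component, giving eigenvalue $0$; when $n = 3k+2$ (so $S_{k+1} = S_{k_2}$) only the $T_{k_2, k_3}/2$ term survives, yielding eigenvalue $\alpha_{k+1}^{(k+1, k+1)}/2$. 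Spanning over $B$ with $|B| = k+1$ gives $E_{k+1}$.

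The main obstacle is ensuring that $\lambda_d^+ \neq \lambda_d^-$, so each $V_{B, d}$ genuinely decomposes as two 1-dimensional eigenspaces rather than degenerating. The discriminant $\alpha_d^{(k_2, k_2)2} + 8 \alpha_d^{(k_1, k_2)} \alpha_d^{(k_2, k_1)}$ is non-negative because its second summand has sign $(-1)^{2d} = +1$ by the closed form in Lemma \ref{operator12}. Strict positivity then follows from $\alpha_d^{(k_2, k_2)} \neq 0$, which holds because $n \geq 2k_2$ for every $n \in \{3k+1, 3k+2\}$, making the relevant binomial coefficient non-zero. Combined with the dimension count above, this gives the stated eigenspace decomposition.
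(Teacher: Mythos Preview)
Your argument is essentially the same as the paper's: both compute the action of $T$ on the two-dimensional span of $\chi_{\phi(B),B}^{(k_1)}$ and $\chi_{\phi(B),B}^{(k_2)}$, obtain the same $2\times 2$ matrix and characteristic polynomial $2\lambda^2 - \alpha_d^{(k_2,k_2)}\lambda - \alpha_d^{(k_1,k_2)}\alpha_d^{(k_2,k_1)}$, and handle $d=k+1$ by observing that $\chi_{\phi(B),B}^{(k)}$ vanishes identically. Your discriminant check that $\lambda_d^+\neq\lambda_d^-$ is a welcome addition that the paper omits.

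One small inaccuracy: the harmonic polynomials $\chi_{\phi(B),B}^{(t)}$ form a \emph{basis} of $S_t^*$, not an orthogonal basis---it is the level-$d$ eigenspaces $E_d$ of the Johnson scheme that are mutually orthogonal, while the individual $\chi_{\phi(B),B}$ within a level need not be. Consequently the $V_{B,d}$ are not pairwise orthogonal. This does not damage your argument, since you only use orthogonality to justify the dimension count, and linear independence (which you do have) suffices for that; but the sentence as written is false and should be weakened.
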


\begin{proof}
Let $\chi = \chi_{\phi(B),B}$ with $|B|=d$. Then for $d \leq k$ we have

\begin{equation}
\begin{aligned}
\restr{(T \chi^{a,b})}{S_{k_2}} &= (T_{k_2,k_2} a\chi^{(k_2)} + T_{k_2,k_1} b\chi^{(k_1)})/2 \\
&= (a \alpha_d^{(k_2,k_2)} + b \alpha_d^{(k_2,k_1)})\chi^{(k_2)}/2 \\
\restr{(T \chi^{a,b})}{S_{k_1}} &= T_{k_1,k_2} a\chi^{(k_2)} \\
&= a \alpha_d^{(k_1,k_2)} \chi^{(k_1)} \\
T \chi^{a,b} &= \chi^{(a \alpha_d^{(k_2,k_2)} + b \alpha_d^{(k_2,k_1)})/2, a  \alpha_d^{(k_1,k_2)}}
\end{aligned}
\end{equation}

Thus $\chi^{a,b}$ is an eigenfunction with eigenvalue $\lambda$ if and only if $\lambda a = (a \alpha_d^{(k_2,k_2)} + b \alpha_d^{(k_2,k_1)})/2$ and $\lambda b = a  \alpha_d^{(k_1,k_2)}$. This is solved by

\begin{equation}\label{eq:lambdaquadratic}
0 = 2 \lambda^2 -  \alpha_d^{(k_2,k_2)} \lambda - \alpha_d^{(k_1,k_2)}\alpha_d^{(k_2,k_1)}
\end{equation}

and the solutions to this quadratic equation are the required eigenvalues from equation (\ref{eq:quadraticsolns}). Then since $a\alpha_d^{(k_1,k_2)} = \lambda b$, the eigenfunctions within this eigenspace are $\chi_{\phi(B),B}^{\lambda, \alpha_d^{(k_1,k_2)}}$ as required.

On the other hand, for $d = k+1$, $\chi^{(k)}$ is zero everywhere; indeed, for $x \in S_k$ there must be some $\set{\phi(B)_i,B_i}$ for $1 \leq i \leq k+1$ not intersecting with $x$. Then $x_{\phi(B)_i} - x_{B_i} = 0$ so $\chi^{(k)}(x) \equiv 0$.

Therefore $E_{k+1}$ is itself an eigenspace, since for $\chi^{(k+1)}$ in its basis we have $T\chi^{(k+1)} = C\chi^{(k)} = 0$; then its eigenvalue is $\lambda_{k+1}=0$. On the other hand, for $n=3k+2$, we have

\begin{equation}
T\chi^{(k+1)} = \alpha_{k+1}^{(k+1,k+1)}\chi^{(k+1)}/2
\end{equation}

so $E_{k+1}$ is an eigenspace with eigenvalue $\lambda_{k+1} = \alpha_{k+1}^{(k+1,k+1)}/2$.

These eigenspaces span all of $S^*$; we know from \autocite{filmus} that

\begin{equation}
S^*_{k_i} = \left\langle \chi^{(k_i)}_{\phi(B),B} : |B| \leq k_i \right\rangle
\end{equation}

so we can write any function in $S^*$ as a sum of terms of the form $\chi_{\phi(B),B}^{a,b}$ for $|B|\leq k$ and $\chi^{(k+1)}_{\phi(B),B}$ for $|B|=k+1$. But then $(a,b)$ can be written as a linear combination of $(\lambda^+_d,\alpha_d^{(k_1,k_2)})$ and $(\lambda^-_d,\alpha_d^{(k_1,k_2)})$, and these correspond to functions $\chi^{\lambda^{\epsilon}_d,\alpha_d^{(k_1,k_2)}}$ which exist in $E^{\epsilon}_d$. Meanwhile, $E_{k+1}$ contains all $\chi^{(k+1)}$.
\end{proof}

As in Lemma \ref{stabWLbound} we will require a lower bound on $\lambda_d^{\pm}$ for $d \geq 2$.

\begin{lem}\label{stab12bound}
For $k, d \geq 2$ we have $\lambda_d^- > -\frac{1}{8} - \frac{1}{6k}$. Moreover we have $\lambda_d^- > -\frac{1}{8}$ if $n=3k+2$. Finally $\lambda_{k+1} > -\frac{1}{8}$.
\end{lem}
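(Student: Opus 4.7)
The plan is to rewrite each bound $\lambda_d^- > L$ as a sign condition on the quadratic $f(\lambda) := 2\lambda^2 - \alpha_d^{(k_2,k_2)}\lambda - \alpha_d^{(k_1,k_2)}\alpha_d^{(k_2,k_1)}$, whose roots are $\lambda_d^\pm$. Since this parabola opens upward and $\lambda_d^+ \geq 0$, we have $\lambda_d^- > L$ for any $L < 0$ if and only if $f(L) > 0$. Writing $\beta_d := |\alpha_d^{(k_2,k_2)}|$ and $P_d := \alpha_d^{(k_1,k_2)}\alpha_d^{(k_2,k_1)}$, and noting that all three of the $\alpha_d^{(k_i,k_j)}$ share the sign $(-1)^d$ so $P_d \geq 0$, the inequality $f(-1/8) > 0$ becomes $P_d + (-1)^{d+1}\beta_d/8 < 1/32$.

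The key technical input is geometric decay of the $\alpha$'s. The ratio $|\alpha_d^{(k_i,k_j)}/\alpha_{d-1}^{(k_i,k_j)}|$ equals $(k_j - d + 1)/(n - k_i - d + 1)$, which is at most $1/2$ precisely when $d \geq 2k_j + k_i - n + 1$. A case-check over $n \in \set{3k+1, 3k+2}$ and $(i,j) \in \set{(2,2),(1,2),(2,1)}$ shows this threshold is at most $2$ throughout, so $\beta_d \leq \beta_2 \cdot 2^{-(d-2)}$ and $P_d \leq P_2 \cdot 4^{-(d-2)}$ whenever $d \geq 2$. For $d \geq 3$ a sufficient version of the displayed inequality that handles both parities of $d$ is $\beta_d + 8 P_d < 1/4$; via the decay bounds this reduces to $\beta_2/2 + 2P_2 < 1/4$, and substituting the explicit formulas for $\beta_2$ and $P_2$ turns it into an elementary polynomial inequality in $k$ (for instance $-3k < 0$ when $n = 3k+1$).

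The case $d = 2$ must be handled separately and is where the two regimes diverge. When $n = 3k+2$ a direct computation of $f(-1/8)$ reduces to another elementary inequality ($-3 < 18k$) and yields $\lambda_2^- > -1/8$, certifying the sharper claim. When $n = 3k+1$, however, the explicit formula $f(-1/8) = (9 - 6k)/(32(4k^2 - 1))$ is strictly negative for $k \geq 2$, so $\lambda_2^- < -1/8$ genuinely holds and the $1/(6k)$ slack is necessary. Setting $L = -1/8 - 1/(6k)$, I would expand $f(L) = f(-1/8) + 2(L^2 - 1/64) - \alpha_d^{(k_2,k_2)}(L + 1/8)$; the bound $4k^2 - 1 \geq 3k^2$ gives $f(-1/8) \geq -1/(16k)$, which is outweighed by the $1/(12k)$ contribution from $2(L^2 - 1/64)$, yielding $f(L) > 0$.

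Finally, $\lambda_{k+1}$ is read off directly from Lemma \ref{teigenspaces}: it equals $0$ when $n = 3k+1$, while for $n = 3k+2$ its magnitude is $1/(2\binom{2k+1}{k+1}) \leq 1/20$ for $k \geq 2$; both are comfortably above $-1/8$. The hard part is the $d = 2$, $n = 3k+1$ case, where $\lambda_2^- < -1/8$ truly holds and one must carefully balance the negative $f(-1/8)$ against the perturbation from shifting $L$ down by $1/(6k)$; the remaining cases reduce to the decay estimate together with routine polynomial manipulations.
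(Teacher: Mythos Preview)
Your proposal is correct and follows essentially the same strategy as the paper: reduce $\lambda_d^- > L$ to positivity of the quadratic $f(L) = 2L^2 - \alpha_d^{(k_2,k_2)}L - \alpha_d^{(k_1,k_2)}\alpha_d^{(k_2,k_1)}$, handle $d \geq 3$ by geometric decay of the $\alpha$'s, and treat $d=2$ with an explicit computation of $f(-1/8)$.

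There are only two minor differences worth noting. First, for the delicate case $d=2$, $n=3k+1$, the paper bounds $q'(\lambda) \leq -3/8$ on $(-\infty,-1/8]$ and integrates to locate the root, whereas you directly expand $f(-1/8-1/(6k)) = f(-1/8) + 2(L^2 - 1/64) - \alpha(L+1/8)$ and compare the $-1/(16k)$ deficit against the $1/(12k)$ gain; the two arguments are equivalent in content. Second, your sketch explicitly checks $d=2$ for $n=3k+2$ (reducing to $18k+3>0$), while the paper's written proof only states the $d \geq 3$ bound in that regime; your version is therefore slightly more complete on this point.
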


\begin{proof}
Consider the quadratic expression $q(\lambda) = 2 \lambda^2 -  \alpha_d^{(k_2,k_2)} \lambda - \alpha_d^{(k_1,k_2)}\alpha_d^{(k_2,k_1)}$ given in equation (\ref{eq:lambdaquadratic}). This is a positive quadratic with negative $y$-intercept, so given a negative value $\lambda$ for which $q(\lambda) > 0$, we know that both solutions are greater than $\lambda$.

We evaluate $q$ at $\lambda = -1/8$. In the case of $n=3k+2$ we get

\begin{equation}
q(-1/8) = \frac{1}{32} + \frac{(-1)^d}{8} \prod_{c=0}^{d-1} \frac{k+1-c}{2k+1-c} - \prod_{c=0}^{d-1} \frac{(k-c)(k+1-c)}{(2k+1-c)(2k+2-c)}
\end{equation}

For $d \geq 3$ this gives

\begin{equation}
\begin{aligned}
q(-1/8) &> \frac{1}{32} -\frac{1}{8}\cdot \frac{1}{2^d} = \frac{1}{4^d} \\
&\geq 0
\end{aligned}
\end{equation}

Note that the first product is bounded above by $\frac{1}{2^d}$ because at $d=3$ it gives $\frac{k^2-1}{8k^2-2}$ which is less than $1/8$.

Therefore $\lambda_d^- > -1/8$ in this case. In the case of $n=3k+1$ we get

\begin{equation}
q(-1/8) = \frac{1}{32} + \frac{(-1)^d}{8}\prod_{c=0}^{d-1} \frac{k-c}{2k+1-c} - \prod_{c=0}^{d-1}\frac{(k-c)(k+1-c)}{(2k-c)(2k+1-c)}
\end{equation}

For $d\geq 3$ this gives

\begin{equation}
\begin{aligned}
q(-1/8) &> 1/32 -\frac{1}{8}\cdot \frac{1}{2^d} - \frac{1}{4^d} \\
&\geq 0
\end{aligned}
\end{equation}

On the other hand, for $d=2$ we have

\begin{equation}
\begin{aligned}
q(-1/8) &= \frac{1}{32} + \frac{1}{8}\cdot \frac{k(k-1)}{2k(2k+1)} - \frac{k^2(k-1)(k+1)}{4k^2(2k-1)(2k+1)} \\
&= \frac{1}{32}\left( \frac{9-6k}{4k^2-1}\right)
\end{aligned}
\end{equation}

In fact, since $d \leq k$ we have $k\geq 2$ so $q(-1/8)< 0$, and $4k^2-1\geq 3k^2 >0$ and $9-6k>-6k$ so $q(-1/8) > -1/16k$. Also, we have

\begin{equation}
\begin{aligned}
\frac{\diff q}{\diff \lambda} &= 4\lambda - \alpha_d^{(k,k)} \\
q'(-1/8) &= -\frac{1}{2} - \alpha_d^{(k,k)} \\
&\leq -\frac{1}{2} + \frac{1}{8} = -\frac{3}{8}
\end{aligned}
\end{equation}

Moreover, $q'$ is increasing, so the gradient of $q$ is at most $-3/8$ for all $\lambda \leq -1/8$. Then

\begin{equation}
\begin{aligned}
q(-1/8 + 8q(-1/8)/3) &= q(-1/8) - (q(-1/8) - q(-1/8 + 8q(-1/8)/3)) \\
&= q(-1/8) - \int_{-1/8 + 8q(-1/8)/3}^{-1/8} q'(\lambda) \diff \lambda \\
&\geq q(-1/8) - \int_{-1/8 + 8q(-1/8)/3}^{-1/8} -\frac{3}{8} \diff \lambda \\
&= q(-1/8) - q(-1/8) = 0
\end{aligned}
\end{equation}

Hence $\lambda_d^- > -1/8 - 8q(-1/8)/3 > -1/8 - 1/6k$, as required. Finally, $\lambda_{k+1} = 0 > -1/8$ for $n=3k+1$. For $n=3k+2$, recall that $k\geq 2$; then we have 

\begin{equation}
\begin{aligned}
\lambda_{k+1} &= \alpha_{k+1}^{(k+1,k+1)}/2 \\
&=(-1)^{k+1}\binom{k}{0}\binom{2k+1}{k+1}^{-1} \\
&\geq -\binom{2k+1}{k}^{-1}/2 \\
&\geq -1/20 > -1/8
\end{aligned}
\end{equation}

as required.
\end{proof}

An upper bound of $1/4$ is routine to check by showing that $q(1/4)>0$, but it is not required in this paper.

Now we introduce our function $g$ which corresponds to the critical part of the SWF, and prove the key result regarding its behaviour on $S_k$ and $S_{k+1}$.

\begin{thm}\label{2sliceWL}
For any egalitarian boolean function on the double slice, $g: S \rightarrow \set{0,1}$ , let $p_2 = \innerproduct{\restr{g}{S_{k_2}}}{\mathbbm{1}}_{k_2}$ and let $p_{k_1} = \innerproduct{\restr{g}{S_{k_1}}}{\mathbbm{1}}_{k_1}$. Suppose further that

\begin{equation}
\prob[(x_1,x_2,x_3) \sim U(D)]{g(x_1)=g(x_2)=g(x_3)} = 0
\end{equation}

Then

\begin{equation}
|p_1 - 1/2| > \frac{5}{6\sqrt{3}} + O(1/k)
\end{equation}

For $n=3k+2$, we can omit the $O(1/k)$ term.

Moreover, if $p_1 > 1/2$ then $p_2 \leq 1/2$ and vice versa.
\end{thm}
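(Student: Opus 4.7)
The plan is to mirror the strategy of Theorem~\ref{sliceWL}, adapted to the two-slice setting where we must track the marginals $p_1$ and $p_2$ separately. The first step is to analyse the joint law of $(g(x_1),g(x_2),g(x_3))$ under $\vec{x} \sim U(D)$. By the $x_2\leftrightarrow x_3$ symmetry and the hypothesis that $\Pr[g(x_1)=g(x_2)=g(x_3)]=0$, the eight configuration probabilities reduce to four free parameters, in the spirit of Lemma~\ref{AprobWL}. Cross-referencing these with the marginals $p_1$, $p_2$ and with $\Stab_{k_1,k_2}(g)=\Pr[g(x_1)=g(x_2)=1]$ and $\Stab_{k_2,k_2}(g)=\Pr[g(x_2)=g(x_3)=1]$, a linear combination yields
\begin{equation}
\Stab(g) = \frac{2\Stab_{k_1,k_2}(g) + \Stab_{k_2,k_2}(g)}{3} = \frac{p_1+2p_2-1}{3} = p - \tfrac{1}{3},
\end{equation}
where $p=(p_1+2p_2)/3$ is the $\Sigma$-mean of $g$.

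Next I would decompose $g$ in the eigenspace basis of Lemma~\ref{teigenspaces}. The degree-zero piece splits into the constant $g^{(0,+)}=p\cdot\mathbbm{1}$ (eigenvalue $1$) and a component $g^{(0,-)}$ along the vector $-\tfrac{1}{2}\mathbbm{1}_{S_{k_2}}+\mathbbm{1}_{S_{k_1}}$ (eigenvalue $-1/2$), whose squared $\Sigma$-norm works out to $2(p_1-p_2)^2/9$. Egalitarianism of $g$ on each slice kills both the $E_1^+$ and $E_1^-$ projections by the same inner-product argument used in Lemma~\ref{deg1WL}. For $d\geq 2$ and for $E_{k+1}$, Lemma~\ref{stab12bound} gives $\lambda_d^\pm,\lambda_{k+1}\geq -\tfrac{1}{8} - O(1/k)$. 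Combining these with Parseval (where $\|g\|^2=p$ forces the residual higher-degree mass to equal $p-p^2-2(p_1-p_2)^2/9$) yields
\begin{equation}
\Stab(g) \geq p^2 - \frac{(p_1-p_2)^2}{12} - \frac{p-p^2}{8} - O(1/k).
\end{equation}

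Setting this lower bound equal to $p-\tfrac{1}{3}$, multiplying by $24$ and simplifying gives $(p_1-p_2)^2 \geq \tfrac{27(p-1/2)^2}{2} + \tfrac{5}{8} - O(1/k)$. Writing $u=p_1-\tfrac{1}{2}$ and $v=p_2-\tfrac{1}{2}$, so that $p-\tfrac{1}{2}=(u+2v)/3$ and $p_1-p_2=u-v$, the inequality rearranges to the quadratic form
\begin{equation}
\tfrac{1}{2}u^2 + 8uv + 5v^2 \leq -\tfrac{5}{8} + O(1/k).
\end{equation}
The sign clause is then immediate: for $k$ large the right-hand side is strictly negative, while $u>0$ and $v>0$ would make the left-hand side non-negative, a contradiction; hence $p_1>\tfrac{1}{2}$ forces $p_2\leq\tfrac{1}{2}$, and symmetrically. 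For the quantitative bound, treat the inequality as a quadratic in $v$; its discriminant is $54u^2 - \tfrac{25}{2} + O(1/k)$, and non-negativity of this discriminant is necessary for any real $v$ to satisfy the inequality, forcing $|u|\geq 5/(6\sqrt{3}) - O(1/k)$ as required.

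The main obstacle will be the Fourier bookkeeping: identifying the correct basis vector for $E_0^-$ together with its squared norm under the non-uniform distribution $\Sigma$, and then propagating the $(p_1-p_2)^2$ contribution cleanly through the subsequent algebra into the final quadratic form in $(u,v)$. For $n=3k+2$ the bounds $\lambda_d^-, \lambda_{k+1} > -\tfrac{1}{8}$ hold strictly without any $1/k$ correction, which is exactly what allows the $O(1/k)$ term to be dropped in that case.
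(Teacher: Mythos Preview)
Your proposal is correct and follows essentially the same approach as the paper: derive $\Stab(g)=p-\tfrac{1}{3}$ from the configuration probabilities, decompose $g$ over the eigenspaces of $T$ with $W^{=0,+}=p^2$, $W^{=0,-}=2(p_1-p_2)^2/9$ and $W^{=1,\pm}=0$, bound the remaining mass via Lemma~\ref{stab12bound}, and then read off $|p_1-1/2|$ from a discriminant condition. Your centered variables $(u,v)$ and the quadratic form $\tfrac{1}{2}u^2+8uv+5v^2\leq -\tfrac{5}{8}+O(1/k)$ are a cosmetic repackaging of the paper's direct substitution $q=p_1-p_2$, $3p=p_1+2p_2$; the only minor difference is that the paper first isolates $|q|>\sqrt{27/56}>1/2$ to obtain the sign clause uniformly for all $k\geq 2$ (and handles $k\leq 1$ separately), whereas your quadratic-form positivity argument yields it only for $k$ large.
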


\begin{proof}
For $a_1, a_2, a_3 \in \set{0,1,\epsilon}$ we let $p_{a_1 a_2 a_3} = \prob{a_i \in \set{g(x_i),\epsilon} \quad \forall i}$. By assumption we have $p_{000} = p_{111}=0$. On the other hand, 

\begin{equation}
\begin{aligned}
p_{1\epsilon\epsilon} &= \prob{g(x_1)=1} \\
&= \prob[x \sim U(S_{k_1})]{g(x)=1} \\
&= \expect[x \sim U(S_{k_1})]{g(x)} \\
&= \innerproduct{g}{\mathbbm{1}}_{k_1} \\
&= p_1
\end{aligned}
\end{equation}

Similarly, $p_{\epsilon 1 \epsilon} = p_{\epsilon \epsilon 1}= p_2$. Moreover,

\begin{equation}
\begin{aligned}
p_{\epsilon 11} &= \prob{g(x_2)=g(x_3)=1} \\
&= \prob[x \sim U(S_{k_2}), y \sim R_{k_2,k_2}(x)]{g(x)=g(y)=1} \\
&= \expect[x \sim U(S_{k_2}), y \sim R_{k_2,k_2}(x)]{g(x)g(y)} \\
&= \expect[x \sim U(S_{k_2})]{g(x)T_{k_2,k_2}g(x)} \\
&= \innerproduct{g}{T_{k_2,k_2}g}_{k_2} \\
&= \Stab_{k_2,k_2}(g)
\end{aligned}
\end{equation}

Similarly, $p_{1\epsilon 1} = p_{11\epsilon} = \Stab_{k_1,k_2}(g) = \Stab_{k_2,k_1}(g)$. But

\begin{equation}
\begin{aligned}
p_{\epsilon 11} &= p_{111} + p_{011} \\
&= p_{011}
\end{aligned}
\end{equation}

since $p_{111}=0$; so in fact $p_{011} = \Stab_{k_2,k_2}(g)$ and similarly $p_{101}=p_{110} = \Stab_{k_1,k_2}(g)$. Finally,

\begin{equation}
\begin{aligned}
p_{1 \epsilon \epsilon} &= p_{111} + p_{101} + p_{110} + p_{100} \\
&= 2\Stab_{k_1,k_2}(g) + p_{100}
\end{aligned}
\end{equation}

so $p_{100} = p_1 - 2\Stab_{k_1,k_2}(g)$, and similarly $p_{010} = p_{001} = p_2 - \Stab_{k_1,k_2}(g) -\Stab_{k_2,k_2}(g)$.

Combining all of this, and letting $p = \innerproduct{g}{\mathbbm{1}} = (2p_2 + p_1)/3$, we have

\begin{equation}
\begin{aligned}
1 &= p_{011}+p_{101}+p_{110}+p_{100}+p_{010}+p_{001} \\
&= 2p_2+ p_1 - \Stab_{k_2,k_2}(g) - 2\Stab_{k_1,k_2}(g) \\
&= 3p - 3\Stab(g) \\
0 &= 1 - 3p + 3\Stab(g)
\end{aligned}
\end{equation}

The $0$ on the left hand side in this final equation is analogous to $\prob{A}$ in Lemma \ref{AprobWL}.

Now we introduce the spectrum and eigenspaces of $T$ as determined in Lemma \ref{teigenspaces}. We let $g^{=d,\epsilon}$ be the projection of $g$ onto $E_d^{\epsilon}$ for $\epsilon \in \pm$ and $d \leq k$, and $g^{=k+1}$ the projection of $g$ onto $E_{k+1}$ (in future when we refer to these indices we will elide the fact that there is no $\pm$ sign associated with $d=k+1$). Then we let $W^{=d,\epsilon}[g] = \innerproduct{g^{=d,\epsilon}}{g^{=d,\epsilon}}$ for all indices $d,\epsilon$.

Note that $\lambda_0^+ = 1$ and so $E_0^+ = \left\langle \chi_{\emptyset,\emptyset}^{1,1} = \mathbbm{1}\right\rangle$; then $g^{=0,+} = p\mathbbm{1}$ and $W^{=0,+}[g] = p^2$. Similarly $\lambda_0^- = -1/2$ and $E_0^- = \left\langle \chi_{\emptyset,\emptyset}^{-1/2,1} = \mathbbm{1}_{S_{k_1}} - \mathbbm{1}_{S_{k_2}}/2 \right\rangle$; so $W^{=0,-}[g] = 2q^2/9$, letting $q=p_1-p_2$. Note that $|q|\leq 1$.

Moreover, since $g$ is egalitarian we have $\innerproduct{g}{\chi_{\phi(B),B}}_{k_i} = 0$ for any $i$ and for $|B|=1$ (analogously to Lemma \ref{deg1WL}). Thus $W^{=1,+}[g] = W^{=1,-}[g] = 0$. At this point, notice that if $k \leq 1$ then $g$ must take a constant value on $S_k$. Therefore, in the case of $n=3k+2$ we have $p_1 = 0$ or $p_1 = 1$, and the bound on $|p_1 - 1/2|$ holds. Furthermore, we cannot have $p_2 >1/2$ and $p_1=1$ or there would have to be a $3$-tuple $(x_1,x_2,x_3) \in D$ with $g(x_i)\equiv 1$, and similarly for $p_2<1/2$ and $p_1=0$. In the case of $n=3k+1$ we find that $p_2 = 0$ or $p_2 = 1$, and then by taking any $3$-tuple $(x_1,x_2,x_3) \in D$ we see that $g(x_1) = 1-g(x_2) =1 - g(x_3) = 1-p_2$, so again the desired result holds. Going forward we can therefore assume that $k\geq 2$, which will allow us to apply Lemma \ref{stab12bound}.

Combining our values for $W^{=1,\epsilon}$ and $W^{=0,\epsilon}$, we have

\begin{equation}
\begin{aligned}
\Stab(g) &= \sum_{d, \epsilon} \lambda_d^{\epsilon} W^{=d,\epsilon}[g] \\
&= p^2 - q^2/9 + \sum_{d\geq 2, \epsilon} \lambda_d^{\epsilon} W^{=d,\epsilon}[g]
\end{aligned}
\end{equation}

On the other hand, we have

\begin{equation}
\begin{aligned}
p &= \innerproduct{g}{g} \\
&= \sum_{d,\epsilon} \innerproduct{g^{=d,\epsilon}}{g^{=d,\epsilon}} \\
&= \sum_{d,\epsilon} W^{=d,\epsilon}[g] \\
&= p^2 + 2q^2/9 + \sum_{d \geq 2, \epsilon} W^{=d,\epsilon}[g]
\end{aligned}
\end{equation}

Finally, $\lambda_d^+ > \lambda_d^->-1/8 -1/6k$ for all $d \geq 2$ by Lemma \ref{stab12bound} (and we can ignore the $-1/6k$ term in the case of $n=3k+2$). Therefore

\begin{equation}\label{eq:qpinequality}
\begin{aligned}
p-1/3 = \Stab(g) &> p^2 - q^2/9 - \left(\frac{1}{8} + \frac{1}{6k}\right) \sum_{d \geq 2, \epsilon} W^{=d,\epsilon}[g] \\
&= p^2 - q^2/9 - \left(p - p^2 - 2q^2/9 \right)\left(\frac{1}{8} + \frac{1}{6k}\right) \\
\left(2 - \frac{8}{9k}\right)q^2 &> 8 - \left(27+\frac{4}{k}\right)(p-p^2)
\end{aligned}
\end{equation}

The right hand side is minimised when $p=1/2$, so for any $p$ we have $q^2 > \frac{45k-36}{72k-32}$. This is increasing in $k$, so for all $k \geq 2$ we have $|q| > \sqrt{\frac{27}{56}} > 1/2$. Therefore, either $q > 1/2$, in which case $p_1 \geq q >1/2$ and $p_2 \leq 1-q < 1-1/2 = 1/2$; or $q < -1/2$, in which case $p_2 \geq -q > 1/2$ and $p_1 \leq 1+q < 1-1/2 = 1/2$. So if $p_1 > 1/2$ then $p_2 < 1/2$ and vice versa, as required.

Also, substituting $q = p_1-p_2$ and $3p = 2p_2 + p_1$ into inequality (\ref{eq:qpinequality}) we get

\begin{equation}
\begin{aligned}
2p_2^2 - 4p_2 p_1 + 2p_1^2 &> 8 - 18p_2 +12p_2^2 + 6p_2 p_1 - 9p_1 + 6p_2 p_1 + 3p_1^2 + O(1/k) \\
0 &> 8 - 18p_2 - 9p_1 + 10p_2^2 + 16p_2 p_1 + p_1^2 + O(1/k)
\end{aligned}
\end{equation}

Here we can let $O(1/k)$ be dependent on $k$ but independent of $p_1$ and $p_2$, since they are both bounded.

For $p_1$ fixed, $p_2$ is given by

\begin{equation}
\begin{aligned}
\left| 20p_2 + 16p_1-18 \right| &< \sqrt{(16p_1-18)^2 - 40(8-9p_1 + p_1^2 + O(1/k))}
\end{aligned}
\end{equation}

This discriminant must be non-negative for there to be any solution, so we know that

\begin{equation}
\begin{aligned}
216p_1^2 - 216p_1 + 4 + O(1/k) &> 0 \\
|p_1 - 1/2| &> \sqrt{\frac{25}{108} + O(1/k)} \\
&= \frac{5}{6\sqrt{3}} + O(1/k)
\end{aligned}
\end{equation}

as required. In the case of $n=3k+2$, all the same bounds apply without the $O(1/k)$ term.
\end{proof}

Now letting $g: \powerset{V} \rightarrow \set{W,T,L}$, we will use Theorem \ref{2sliceWL} to show that either $g$ agrees with a Borda rule most of the time, or the relative results are inconsistent. We need to impose the condition that $g$ is decreasing; meaning, if $A \subset B$ then $g(A)>g(B)$. We will see that this corresponds to the PR condition for SWFs.

\begin{thm}\label{cubeapprox}
For $n=3k+1$ and for any decreasing function $g: \powerset{V} \rightarrow \set{W,T,L}$ such that $(g,g,g)$ is a consistent $3$-tuple of functions, where $\restr{g}{S}$ is egalitarian, we have

\begin{equation}\label{eq:ninetysixpercent1}
\begin{aligned}
\prob[x \sim U(S_i)]{g(x) = L} &< 1 - \frac{5}{3\sqrt{3}} + O(1/k) \quad i \leq k-1  \\
\prob[x \sim U(S_i)]{g(x) = W} &< \frac{1}{2} - \frac{5}{6\sqrt{3}} + O(1/k) \quad i \geq k+1
\end{aligned}
\end{equation}

On the other hand, for $n=3k+2$ we have

\begin{equation}\label{eq:ninetysixpercent2}
\begin{aligned}
\prob[x \sim U(S_i)]{g(x) = W} &< 1 - \frac{5}{3\sqrt{3}} \quad i \geq k+2  \\
\prob[x \sim U(S_i)]{g(x) = L} &< \frac{1}{2} - \frac{5}{6\sqrt{3}} \quad i \leq k
\end{aligned}
\end{equation}
\end{thm}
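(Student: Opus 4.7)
The plan is to derive Theorem \ref{cubeapprox} by combining Theorem \ref{2sliceWL} with the decreasing property of $g$ and a consistency-driven double-counting argument on 3-partitions of $V$.

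\textbf{First, the tight bound.} I would apply Theorem \ref{2sliceWL} to the egalitarian boolean function $g_W := \mathbbm{1}_{g=W}$ when $n=3k+1$, and symmetrically to $g_L := \mathbbm{1}_{g=L}$ when $n=3k+2$. The monochromatic hypothesis holds automatically on the ``all-ones'' side because $\set{W,W,W}$ (resp.\ $\set{L,L,L}$) is inconsistent; the ``all-zeros'' event reduces to $\prob[U(D)]{g(x_i)=T \ \forall i}$, since $\set{T,T,T}$ is the only consistent multiset avoiding $W$ (resp.\ $L$). Granting that this all-$T$ probability vanishes (see the obstacle below), Theorem \ref{2sliceWL} yields $|p_W^{(k+1)} - \frac{1}{2}| > \frac{5}{6\sqrt{3}} + O(1/k)$ when $n=3k+1$, where $p_X^{(i)} := \prob[U(S_i)]{g=X}$, and likewise $|p_L^{(k)} - \frac{1}{2}| > \frac{5}{6\sqrt{3}} + O(1/k)$ when $n=3k+2$. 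Decreasingness of $g$ makes $p_W^{(i)}$ non-increasing and $p_L^{(i)}$ non-decreasing in $i$ (coupling $x \sim U(S_i)$ with a uniform superset $y \in S_j$ yields $y \sim U(S_j)$ marginally, and $g(x)=L$ forces $g(y)=L$). For $n=3k+1$ this gives $p_W^{(k)} \geq p_W^{(k+1)}$, so $p_W^{(k+1)} > \frac{1}{2}$ would force $p_W^{(k)} > \frac{1}{2}$, contradicting the final clause of Theorem \ref{2sliceWL}. Hence $p_W^{(k+1)} < \frac{1}{2} - \frac{5}{6\sqrt{3}} + O(1/k)$, and monotonicity extends to all $i \geq k+1$. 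The $n=3k+2$ case is symmetric.

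\textbf{Second, the looser bound.} For $n=3k+1$ and any $V_1 \in S_{k-1}$ with $g(V_1)=L$, every 3-partition $V = V_1 \sqcup V_2 \sqcup V_3$ with $|V_2|=|V_3|=k+1$ gives a consistent multiset $\set{L, g(V_2), g(V_3)}$; since every consistent multiset containing $L$ (namely $\set{W,W,L}, \set{W,T,L}, \set{W,L,L}$) also contains $W$, at least one of $g(V_2), g(V_3)$ equals $W$. Double-counting the ordered pairs $(V_1, V_2)$ with $V_1 \cap V_2 = \emptyset$, $g(V_1)=L$, $g(V_2)=W$, $|V_1|=k-1$, $|V_2|=k+1$, the ratios $\binom{n}{k+1}/\binom{n}{k-1} = 2(2k+1)/k$ and $\binom{2k}{k-1}/\binom{2k+2}{k+1} = k/(2(2k+1))$ are mutually reciprocal, yielding $p_L^{(k-1)} \leq 2 p_W^{(k+1)}$. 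Substituting the tight bound from the first step gives $p_L^{(k-1)} < 1 - \frac{5}{3\sqrt{3}} + O(1/k)$, and monotonicity extends to $i \leq k-1$. The $n=3k+2$ case uses partitions of type $(k+2, k, k)$ and the symmetric observation that every consistent multiset containing $W$ also contains $L$, to derive $p_W^{(k+2)} \leq 2 p_L^{(k)}$ in the same way.

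\textbf{Main obstacle.} The principal subtlety is justifying the monochromatic-triple hypothesis of Theorem \ref{2sliceWL}, i.e.\ that $\prob[U(D)]{g(x_i)=T \ \forall i}=0$. This is not forced by the stated hypotheses, so I expect the resolution to be either a structural argument ruling out all-$T$ $3$-partitions of the required sizes under decreasingness and egalitarianism, or a mild generalisation of Theorem \ref{2sliceWL} that tracks an $O(a)$ correction in the monochromatic probability $a$ through the quadratic analysis in its proof -- with the expectation that $a$ is either zero or small enough to be absorbed into the $O(1/k)$ error in the final bound.
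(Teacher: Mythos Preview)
Your overall architecture matches the paper's: apply Theorem~\ref{2sliceWL} to a boolean reduction of $g$, use decreasingness to pin down which side of $1/2$ the slice-probability falls on, and then run the union-bound on $(k-1,k+1,k+1)$ (resp.\ $(k+2,k,k)$) partitions to get the looser bound. The monotonicity argument and the double-counting step are essentially the same as the paper's.

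The gap is precisely the obstacle you flag, and the paper resolves it not by a side argument or by loosening Theorem~\ref{2sliceWL}, but by choosing a smarter boolean reduction than $g_W$. Define
\[
g'(x)=\begin{cases}1 & g(x)=W,\\ 0 & g(x)=L,\\ 1 & g(x)=T \text{ and } |x|<n/3,\\ 0 & g(x)=T \text{ and } |x|>n/3.\end{cases}
\]
In other words, on the smaller slice $S_{k_2}$ treat $T$ as a $1$, and on the larger slice $S_{k_1}$ treat $T$ as a $0$. This is still egalitarian slice-by-slice (each restriction is $\mathbbm{1}_{g\in X}$ for some fixed set $X$), and the monochromatic hypothesis now holds \emph{automatically}: for $n=3k+1$, if $g'(x_i)\equiv 1$ then $g(x_1)=W$ (since $|x_1|=k+1>n/3$ forces $T\mapsto 0$ there) while $g(x_2),g(x_3)\in\{W,T\}$, so the multiset is one of $\{W,W,W\},\{W,W,T\},\{W,T,T\}$, all inconsistent; symmetrically, $g'(x_i)\equiv 0$ forces $g(x_2)=g(x_3)=L$ and $g(x_1)\in\{T,L\}$, again inconsistent. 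The $n=3k+2$ case is analogous. With this choice one has $p_{k+1}=q_{k+1}$ and $p_k=r_k$, and since $r_k\geq q_k\geq q_{k+1}$ the ordering $p_k\geq p_{k+1}$ needed for your sign-determination step still holds.

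So neither of your two proposed resolutions is what the paper does: there is no structural argument that all-$T$ triples are impossible (indeed they need not be), and no $O(a)$ perturbation of Theorem~\ref{2sliceWL} is required. The trick is simply to allocate the tie values asymmetrically across the two slices so that the all-$T$ event lands in neither monochromatic class.
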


\begin{proof}
For $0 \leq i \leq n$ we define

\begin{equation}
\begin{aligned}
q_i = \prob[x \sim U(S_i)]{g(x) = W} \\
r_i = \prob[x \sim U(S_i)]{g(x) \neq L}
\end{aligned}
\end{equation}

Clearly $r_i \geq q_i$. Because $g$ is decreasing we know that the $(q_i)_i$ and $(r_i)_i$ are decreasing:

\begin{lem}\label{increasingnesspqr}
For $i<j$ we have $q_i \geq q_j$ and $r_i \geq r_j$.
\end{lem}

\begin{proof}
Consider the bipartite graph $G$ with parts $S_i$ and $S_j$, where for $x \in S_i$ and $y \in S_j$ we have $x \sim y$ if and only if $x \subset y$. Note that the degree of $x \in S_i$ is $\binom{n-i}{j-i}=d_i$ and the degree of $y \in S_j$ is $\binom{j}{i}=d_j$, so the graph is biregular and $|S_i|d_i = |S_j|d_j$.

Let $L_i$ be the set of vertices $x \in S_i$ with $g(x)=L$ and $L_j$ the set of vertices $y \in S_j$ with $g(y) = L$. Let $N$ be the number of edges $xy \in G[L_i \sqcup L_j]$. If $x \in L_i$ then $g(x) = L$ so $g(y) = L$ for all $y \supset x$; in other words, for every $y$ in $\Gamma(x)$ the neighbourhood of $x$.

Thus for every $x \in L_i$ we can find $d_i$ edges in $G[L_i \sqcup L_j]$, and each of these is adjacent to only one vertex in $L_i$, so $N = |L_i|d_i$. On the other hand, For every $y \in L_j$ there are at most $d_j$ edges in $G[L_i \sqcup L_j]$ adjacent to $y$, so $N \leq |L_j|d_j$. So we have

\begin{equation}
\begin{aligned}
1 - r_i = |L_i| / |S_i| &= N / (d_i |S_i|) \\
&= N / (d_j |S_j|) \\
&\leq |L_j| / |S_j| = 1 - r_j \\
r_i &\geq r_j
\end{aligned}
\end{equation}

as required. An identical argument with $L_i$ taken as $g^{-1}(\set{L,T})$ gives $q_i \geq q_j$ as required.
\end{proof}

We define a new function $g': \powerset{V} \rightarrow {0,1}$ by

\begin{equation}
\begin{aligned}
g'(x) =
\begin{cases}
0 & g(x) = L \\
1 & g(x) = W \\
0 & g(x) = T, |x| > n/3 \\
1 & g(x) = T, |x| < n/3
\end{cases}
\end{aligned}
\end{equation}

We let $p_k$ and $p_{k+1}$ denote the expected values of $g'$ on $U(S_k)$ and $U(S_{k+1})$ respectively; so $p_i=p_{k+2-i}$ for $n=3k+1$ and $p_i = p_{k-1+i}$ for $n=3k+2$. Moreover, we have $p_k = \innerproduct{\restr{g'}{S_k}}{\mathbbm{1}}_k = r_k$ and $p_{k+1} = \innerproduct{\restr{g'}{S_{k+1}}}{\mathbbm{1}}_{k+1} = q_{k+1}$.

Clearly $g'$ is decreasing, so $p_k = r_k \geq q_k \geq q_{k+1} = p_{k+1}$. Furthermore, $\restr{g'}{S}$ is egalitarian.

Suppose that for some $(x_1,x_2,x_3) \in D$ we have $g'(x_1)=g'(x_2)=g'(x_3)=1$. Then we have $g(x_1),g(x_2),g(x_3) \in \set{W,T}$. On the other hand, for $n=3k+1$ we have $|x_1| = k+1$ so in fact we have $g(x_1)=W$. So the multiset $\set{g(x_1),g(x_2),g(x_3)}$ is one of $\set{W,W,W}$, $\set{W,W,T}$ and $\set{W,T,T}$, and all of these are inconsistent, contradicting the assumption that $(g,g,g)$ is a consistent $3$-tuple. Similarly, if $n=3k+2$ then $|x_2|,|x_3| = k+1$ so $g(x_2)=g(x_3)=W$, and the multiset is one of $\set{W,W,W}$ and $\set{W,W,T}$, another contradiction. Therefore there are no $(x_1,x_2,x_3) \in D$ with $g'(x_i)\equiv 0$.

Now suppose $g'(x_1)=g'(x_2)=g'(x_3)=0$. Then $g(x_1),g(x_2),g(x_3) \in \set{T,L}$, but for $n=3k+1$ we have $|x_2| = |x_3| = k$ so in fact $g(x_2)=g(x_3)=L$. Then the multiset $\set{g(x_1),g(x_2),g(x_3)}$ is one of $\set{L,L,L}$ and $\set{L,L,T}$, and both of these are inconsistent, contradicting the assumption that $(g,g,g)$ is a consistent $3$-tuple. Finally, if $n=3k+2$ we have $g(x_1)=L$, so the multiset must be one of $\set{L,L,L}$ or $\set{L,L,T}$ or $\set{L,T,T}$. Therefore there are no $(x_1,x_2,x_3) \in D$ with $g'(x_i)\equiv 1$.

Therefore all conditions of Theorem \ref{2sliceWL} are met, so we can apply the theorem and determine that if $p_1 > 1/2$ then $p_2 \leq 1/2$ and vice versa, and that 

\begin{equation}
|p_1 - 1/2| > \frac{5}{6\sqrt{3}} + O(1/k)
\end{equation}

Again, the $O(1/k)$ term is not required in the case of $n=3k+2$. Since $p_k \geq p_{k+1}$ we must have $p_k \geq 1/2 \geq p_{k+1}$.

Then in the case of $n=3k+1$ we have $q_{k+1} = p_{k+1} = p_1 < \frac{1}{2} - \frac{5}{6\sqrt{3}} + O(1/k)$, and by Lemma \ref{increasingnesspqr} we have $q_i < \frac{1}{2}-\frac{5}{6\sqrt{3}} + O(1/k)$ for all $i\geq k+1$, as required.

Now consider a uniformly selected triple $(x_1,x_2,x_3) \in D'$, where $D' \subset S_{k-1} \times S_{k+1} \times S_{k+1}$ is the set of $(k-1,k+1,k+1)$-partitions of $V$. As usual, $x_1 \sim U(S_{k-1})$ and $x_2, x_3 \sim U(S_{k+1})$. Now 

\begin{equation}
\begin{aligned}
1-r_{k-1} = \prob{g(x_1) = L} &\leq \prob{g(x_2) = W \vee g(x_3)=W} \\
&\leq \prob{g(x_2)=W} + \prob{g(x_3) = W} \\
&= 2q_{k+1} < 1 - \frac{5}{3\sqrt{3}} + O(1/k)
\end{aligned}
\end{equation}

Now $r_i$ is decreasing in $i$, so $1-r_i$ is increasing, so for all $i\leq k-1$ we have $1-r_i < 1 - \frac{5}{3\sqrt{3}} + O(1/k)$ as required.

Similarly for $n=3k+2$ we have $r_k = p_k = p_1 > \frac{1}{2} + \frac{5}{6\sqrt{3}}$, so $1-r_k < \frac{1}{2} - \frac{5}{6\sqrt{3}}$; then by Lemma \ref{increasingnesspqr} we have $1-r_i < \frac{1}{2} - \frac{5}{6\sqrt{3}}$ for all $i\leq k$ as required.

Then considering a uniformly selected triple $(x_1,x_2,x_3) \in D'$, where $D' \subset S_{k+2} \times S_k \times S_k$ is the set of $(k+2,k,k)$-partitions of $V$. We have $x_1 \sim U(S_{k+2})$ and $x_2,x_3 \sim U(S_k)$. Now

\begin{equation}
\begin{aligned}
q_{k+2} = \prob{g(x_1) = W} &\leq \prob{g(x_2) = L \vee g(x_3)=L} \\
&\leq \prob{g(x_2)=L} + \prob{g(x_3) = L} \\
&= 2(1-r_k) < 1 - \frac{5}{3\sqrt{3}}
\end{aligned}
\end{equation}

Now $q_i$ is decreasing in $i$, so for all $i \geq k+2$ we have $q_i < 1-\frac{5}{3\sqrt{3}}$ as required.

This completes the proof of Theorem \ref{cubeapprox}.
\end{proof}

Finally we apply this result to SWFs. To state our result we define $A_d \subset A$ to be the subset of all relative elections $a$ with $d_1(a)=d$.

\begin{cor}\label{thm:FCTA12}
Let $F$ be a SWF on $V$ finite with $|V| =3k+1$ or $|V|=3k+2$ for $k$ sufficiently large, and $C = \set{c_1,c_2,c_3}$ satisfying CC, MIIA, N, PR and TA. Let $d \in \set{d_1(a): a \in A}$ with $|d|>1$ and let $a_d$ be a relative election drawn uniformly from $A_d$, and let $e \in \pi_{i,j}^{-1}(a_d)$ be an election. Then with probability at least $\frac{5}{3\sqrt{3}} + O(1/k)$ we have either $\pi_{i,j}(F(e)) = \pi_{i,j}(B_1(e)) = \varphi(d)$ or $\pi_{i,j}(F(e)) = T$.
\end{cor}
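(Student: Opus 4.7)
The plan is to reduce the corollary to Theorem \ref{cubeapprox} via the set-function dictionary of Theorem \ref{setfuncstofull}. First I would convert $F$ into a consistent $3$-tuple $(g_1,g_2,g_3)$ of functions $\powerset{V} \rightarrow \set{W,T,L}$; condition N forces these three to coincide, yielding a single function $g$. Condition TA makes $g$ transitive-symmetric, so its restriction to any slice is egalitarian. Condition PR translates to $g$ being decreasing under inclusion: for $a_1 \geq a_2$ in $A_{i,i+1}$ the definition of $\zeta$ gives $\zeta(a_1) \subset \zeta(a_2)$, and since $f_{i,i+1} = g \circ \zeta$, PR forces $A \subset B \Rightarrow g(A) \geq g(B)$. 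Thus $g$ satisfies all the hypotheses of Theorem \ref{cubeapprox}.

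Next I would translate the probabilistic question about $F$ into one about $g$ on a single slice. Restricted to $A_{i,i+1}$, $\zeta$ is a bijection onto $\powerset{V}$ sending $A_d \cap A_{i,i+1}$ uniformly onto $S_t$, where $t = (n-d)/3$; by neutrality the particular pair $(i,i+1)$ is irrelevant. For $a \in A_{i+1,i}$ I would pass to $-a \in A_{i,i+1}$, so that $t = |\zeta(-a)| = (n+d)/3$ and $f(a) = -g(\zeta(-a))$. The event ``$\pi_{i,j}(F(e)) \in \set{\varphi(d),T}$'' becomes the event that $g$, evaluated on a uniformly random element of $S_t$, avoids the single ``disagreeing'' value in $\set{W,L}$.

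Finally I would apply Theorem \ref{cubeapprox} case by case. Sorting by $d \pmod 3$ and by the sign of $d$, one checks that $|d|>1$ with $n=3k+1$ always places $t$ in $\set{0,\dots,k-1}$ or $\set{k+1,\dots,n}$, and with $n=3k+2$ always places $t$ in $\set{0,\dots,k}$ or $\set{k+2,\dots,n}$. Theorem \ref{cubeapprox} bounds the probability of the disagreeing value by $1-\frac{5}{3\sqrt{3}}+O(1/k)$ on the lower range and by $\frac{1}{2}-\frac{5}{6\sqrt{3}}+O(1/k)$ on the upper range; since $\frac{1}{2}-\frac{5}{6\sqrt{3}}<1-\frac{5}{3\sqrt{3}}$, the probability of the bad event is at most $1-\frac{5}{3\sqrt{3}}+O(1/k)$ in every case, giving the required lower bound of $\frac{5}{3\sqrt{3}}+O(1/k)$ on the good event.

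The proof is essentially a dictionary translation of Theorem \ref{cubeapprox}, so there is no single major obstacle. The chief piece of work is the casework matching each admissible pair $(d, A_{i,j})$ to its corresponding slice of $\powerset{V}$, and verifying that the weaker of the two Theorem \ref{cubeapprox} bounds, namely $1-\frac{5}{3\sqrt{3}}$, is the binding constraint in every case.
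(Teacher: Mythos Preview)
Your proposal is correct and follows essentially the same route as the paper: convert $F$ to a single set function $g$ via Theorem~\ref{setfuncstofull} and condition~N, use TA and PR to verify the hypotheses of Theorem~\ref{cubeapprox}, then translate $A_d$ to the slice $S_t$ with $t=(n\mp d)/3$ (splitting into the $A_{i,i+1}$ and $A_{i+1,i}$ cases by $d\bmod 3$ and negating in the latter), and read off the bounds. The only minor wrinkle is that which of the two bounds applies to the ``lower'' versus ``upper'' range of $t$ swaps between $n=3k+1$ and $n=3k+2$, but since you correctly take $1-\tfrac{5}{3\sqrt{3}}$ as the binding constraint this does not affect the argument.
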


\begin{proof}
Following Corollary \ref{thm:FCTA3} we derive from $F$ a unique set function $g: \powerset{V}\rightarrow \set{W,T,L}$ such that $(g,g,g)$ is a consistent $3$-tuple of functions. Since $F$ satisfies TA, we have that $g$ is transitive-symmetric, so $\restr{g}{S}$ is egalitarian.

Now take $U_- \subset U_+ \subset V$. Setting $a_{\epsilon} = \zeta^{-1}(U_{\epsilon})$ for $\epsilon \in \pm$ we have $a_- \geq a_+$, so by PR we have $f_{i,i+1}(a_-) \geq f_{i,i+1}(a_+)$; so $g(U_-) \geq g(U_+)$ and $g$ is decreasing.

Then $g$ meets all the conditions for Theorem \ref{cubeapprox}, and the inequalities (\ref{eq:ninetysixpercent1}) and (\ref{eq:ninetysixpercent2}) apply.

Given $d \in \set{d_1(a): a \in A}$ and for $n=3k+1$, either $d \equiv 1 \pmod 3$ in which case $d \in \set{d_1(a): a \in A_{i,i+1}}$ or $d \equiv 2 \pmod 3$ in which case $d \in \set{d_1(a): a \in A_{i+1,i}}$. We will prove the result in the first case; then in the second case we replace $d$ with $-d$ and any SWF $a$ with $-a$ and the result is equivalent. Similarly, for $n=3k+2$ we prove the result in the case of $d \equiv 2 \pmod 3$, where the relevant relative SWFs are $A_{i,i+1}$, and then for $d \equiv 1 \pmod 3$ we replace $d$ with $-d$ and $a$ with $-a$.

For $d \in \set{d_1(a): a \in A_{i,i+1}}$, a uniformly selected $a \in A_d$ corresponds to $\zeta(a)$ uniformly selected from $\slice{V}{i} = S_i$ where $i = (n-d)/3$. Then by equation (\ref{eq:ninetysixpercent1}) or (\ref{eq:ninetysixpercent2}), we find that $\prob{g(\zeta(a))=W} < \frac{1}{2} - \frac{5}{6\sqrt{3}} + O(1/k)$ for $i \geq k+1$ (in the case of $n=3k+1$) or $i \geq k+2$ (in the case of $n=3k+2$), which corresponds to $d \leq -2$ (if $n=3k+2$ then $d \leq -2$ implies $d \leq -4$ because $d \equiv 2 \pmod 3$). Meanwhile $\prob{g(\zeta(a))=L} < 1 - \frac{5}{3\sqrt{3}} + O(1/k)$ for $i \leq k-1$ (in the case of $n=3k+1$) or $i \leq k$ (in the case of $n=3k+2$), which corresponds to $d \geq 2$ (if $n=3k+1$ then $d \geq 2$ implies $d \geq 4$ because $d \equiv 1 \pmod 3$). So indeed, for $d \leq -2$ we have $\pi_{i,j}(F(e)) \in \set{L,T}$ with probability at least $\frac{5}{3\sqrt{3}}+ O(1/k)$; and if $d \geq 2$ we have $\pi_{i,j}(F(e)) \in \set{W,T}$ with probability at least $\frac{5}{3\sqrt{3}} + O(1/k)$, as required.
\end{proof}

Further manipulation of the various operators, stability functions and fourier coefficients of $g$ allows us to improve these bounds and to give reasonable numeric bounds on $p_2$ as well; nonetheless we have not been able to establish the stronger result which we believe must hold:

\begin{conj}\label{conj:FCTA12}
For $g$ defined as in Theorem \ref{cubeapprox}, we have

\begin{equation}
\begin{aligned}
g &\neq L \quad i \leq k-1 \\
g &\neq W \quad i \geq k+1
\end{aligned}
\end{equation}
\end{conj}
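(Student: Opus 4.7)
The plan is to proceed by contradiction, upgrading the probabilistic conclusion of Theorem \ref{cubeapprox} to a deterministic one. I focus on the case $n = 3k+1$ and on ruling out $g(U) = L$ for $U \in S_{k-1}$; the complementary conclusion $g(V) \neq W$ for $V \in S_{k+1}$ is symmetric, and the $n = 3k+2$ case proceeds analogously with adjusted slice indices. By decreasingness, any failure $g(U) = L$ with $|U| \leq k-1$ propagates up to a failure on $S_{k-1}$, so it suffices to show $L_{k-1} := \{U \in S_{k-1} : g(U) = L\}$ is empty. Suppose for contradiction $U_0 \in L_{k-1}$. For every partition $V = U_0 \sqcup V_2 \sqcup V_3$ with $|V_2| = |V_3| = k+1$, consistency of $\{L, g(V_2), g(V_3)\}$ forces at least one of $g(V_2), g(V_3)$ to equal $W$. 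Since each $V_2 \in \binom{V \setminus U_0}{k+1}$ determines $V_3$, writing $W_{k+1} := \{V \in S_{k+1} : g(V) = W\}$ and $W_{k+1}^{U_0} := \{V \in W_{k+1} : V \cap U_0 = \emptyset\}$, we obtain the covering bound $|W_{k+1}^{U_0}| \geq \tfrac{1}{2} \binom{2k+2}{k+1}$.

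Turning this local obstruction into a global density bound by double-counting pairs $(U, V) \in L_{k-1} \times W_{k+1}$ with $U \cap V = \emptyset$, and using the identity $|S_{k-1}| \binom{2k+2}{k+1} = |S_{k+1}| \binom{2k}{k-1}$, gives
\begin{equation}
\frac{|L_{k-1}|}{|S_{k-1}|} \leq 2 \cdot \frac{|W_{k+1}|}{|S_{k+1}|}.
\end{equation}
Combined with Theorem \ref{cubeapprox} this yields $|L_{k-1}|/|S_{k-1}| \leq 1 - \tfrac{5}{3\sqrt{3}} + O(1/k)$, which merely recovers the existing one-sided estimate: the factor of $2$ here is exactly the ratio between the two one-sided density bounds in Theorem \ref{cubeapprox}, so this elementary approach is self-defeating and cannot on its own force $|L_{k-1}| = 0$.

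The hard step, and where I expect the main obstacle, is closing this remaining gap from positive to zero density. One plausible route is to sharpen Lemma \ref{stab12bound}, tightening $\lambda_d^- > -\tfrac{1}{8} - \tfrac{1}{6k}$ enough that the admissible region for $(p_1, p_2)$ in the proof of Theorem \ref{2sliceWL} collapses onto the extreme point $p_1 = 0$. A second is to exploit the decreasingness of the surrogate $g'$ beyond the mere monotonicity of its slice marginals, for example via FKN- or Kindler--Safra-style slice stability in the spirit of Filmus, to show that a near-extremal $g'$ must be constant on each slice. A third is to augment the single-partition obstruction above with simultaneous consistency constraints from multiple partition shapes (such as the $(k,k,k+1)$ partitions) in order to link $|L_{k-1}|, |W_{k+1}|, |L_k|$ and $|W_k|$ in a way the two-variable double count cannot. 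My expectation is that a combination of spectral refinement and a new structural observation specific to egalitarian, decreasing, consistent $3$-tuples will be needed, and that no existing Boolean- or slice-stability theorem can be applied off the shelf.
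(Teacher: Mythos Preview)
The statement you are attempting to prove is stated in the paper as an open \emph{conjecture}, not a theorem: the paper explicitly says ``nonetheless we have not been able to establish the stronger result which we believe must hold'' immediately before stating it. There is therefore no proof in the paper to compare your proposal against.

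Your proposal is candid about being incomplete, and your self-assessment is accurate. The double-counting step you carry out is correct, but it is not new information: it is precisely the mechanism already used inside the paper's proof of Theorem~\ref{cubeapprox} to pass from the bound on $q_{k+1}$ to the bound on $1-r_{k-1}$ (the paper phrases it probabilistically as $\mathbb{P}[g(x_1)=L]\le \mathbb{P}[g(x_2)=W]+\mathbb{P}[g(x_3)=W]=2q_{k+1}$, which is exactly your inequality $|L_{k-1}|/|S_{k-1}|\le 2\,|W_{k+1}|/|S_{k+1}|$). So you have rediscovered the argument that produces the existing $1-\tfrac{5}{3\sqrt{3}}$ bound, and your observation that this is ``self-defeating'' for the purpose of pushing to zero density is exactly right.

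The three routes you sketch for closing the gap (sharpening the eigenvalue bound in Lemma~\ref{stab12bound}, invoking a slice stability theorem in the style of FKN/Kindler--Safra/Filmus, or coupling constraints from several partition shapes simultaneously) are all reasonable directions, but none of them is a proof, and you do not claim otherwise. In short: your proposal does not prove the conjecture, the paper does not prove the conjecture either, and your diagnosis of where the obstruction lies is consistent with the paper's own assessment.
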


Theorem \ref{thm:FCTA12exist} shows that this would be essentially the strongest possible result.

\section{Acknowledgements}

I am grateful to Yuval Filmus for directing me to his research on the slice.

\printbibliography

\end{document}